\newtheorem{theorem}{Theorem}
\newtheorem{lemma}{Lemma}
\newtheorem{definition}{Definition}
\newtheorem{corollary}{Corollary}
\newtheorem{assumption}{Assumption}
\newcommand{\ie}{\emph{i.e.}}
\newcommand{\eg}{\emph{e.g.}}
\newcommand{\cf}{\emph{cf.}}
\newcommand{\etc}{\emph{etc.}}
\newcommand{\minimize}{\mathop{\mathrm{minimize}{}}}
\newcommand{\eqdef}{\,\stackrel{\mathrm{def}}{=}\,}
\newcommand{\defeq}{{\,:=\,}}
\newcommand{\order}{\mathcal{O}}
\newcommand{\E}{\mathbb{E}}
\newcommand{\Prob}{\mathbb{P}}
\newcommand{\R}{\mathbb{R}}
\newcommand{\norms}[1]{\|{#1}\|}
\newcommand{\ltwos}[1]{\norms{#1}_2}
\newcommand{\dom}{\mathrm{dom}}
\newcommand{\sign}{\mathrm{sign}}
\newcommand{\supt}{{(t)}}
\newcommand{\suptp}{{(t+1)}}
\newcommand{\what}{\widehat w}
\newcommand{\fp}{f'}
\newcommand{\fpp}{f''}
\newcommand{\fppp}{f'''}
\newcommand{\utilde}{\widetilde u}
\newcommand{\vtilde}{\widetilde v}
\newcommand{\ztilde}{\widetilde z}
\newcommand{\gradbound}{G}
\newcommand{\hessianbound}{L}
\newcommand{\tensorbound}{M}
\newcommand{\reg}{\lambda}
\newcommand{\wstar}{w_\star}
\newcommand{\fastpcg}{\mathcal{A}}
\newcommand{\slowpcg}{\bar{\mathcal{A}}}
\newcommand{\mumax}{\mu_{\mathrm{max}}}
\newcommand{\pcd}{P}
\newcommand{\communicateR}[2]
{\makebox{\qquad#1}\hfill $\overrightarrow{\qquad\qquad}$ \hfill \makebox{#2}}
\newcommand{\communicateL}[2]
{\makebox{\qquad#1}\hfill $\overleftarrow{\qquad\qquad}$ \hfill \makebox{#2}}
\title{Communication-Efficient Distributed Optimization of 
       Self-Concordant Empirical Loss}
\author{
Yuchen Zhang\thanks{
    Department of Electrical Engineering and Computer Science, 
    University of California, Berkekey, CA 94720, USA.
    Email: \texttt{yuczhang@eecs.berkeley.edu}.
    (This work was perfomed during an internship at Microsoft Research.)} 
    \and
Lin Xiao\thanks{
    Machine Learning Groups, Microsoft Research, Redmond, WA 98053, USA.
    Email: \texttt{lin.xiao@microsoft.com}.}
}
\date{January 1, 2015}
\begin{document}
\maketitle

\begin{abstract}
We consider distributed convex optimization problems originated from
sample average approximation of stochastic optimization,
or empirical risk minimization in machine learning. 
We assume that each machine in the distributed computing system has access
to a local empirical loss function, constructed with i.i.d.\ data 
sampled from a common distribution. 
We propose a communication-efficient distributed algorithm to minimize
the overall empirical loss, which is the average of the local empirical losses.
The algorithm is based on an inexact damped Newton method, where
the inexact Newton steps are computed by a distributed 
preconditioned conjugate gradient method.
We analyze its iteration complexity and communication efficiency 
for minimizing self-concordant empirical loss functions, 
and discuss the results for distributed ridge regression, 
logistic regression and binary classification with a smoothed hinge loss.
In a standard setting for supervised learning, 
the required number of communication rounds 
of the algorithm does not increase with the sample size, 
and only grows slowly with the number of machines.
\end{abstract}

\section{Introduction}
\label{sec:introduction}

Many optimization problems in data science 
(including statistics, machine learning, data mining, \etc) 
are formulated with a large amount of data as input.
They are typically solved by iterative algorithms which need to access
the whole dataset or at least part of it during each iteration.
With the amount of data we collect and process growing at a fast pace,
it happens more often that the dataset involved in an optimization problem 
cannot fit into the memory or storage of a single computer (machine).
To solve such ``big data'' optimization problems, 
we need to use distributed algorithms that rely on inter-machine communication.

In this paper, we focus on distributed optimization problems generated through 
\emph{sample average approximation} (SAA) of stochastic optimization problems. 
Consider the problem
\begin{equation}\label{eqn:stoch-opt}
    \minimize_{w\in\R^d} \quad \E_z[\phi(w, z)],
\end{equation}
where $z$ is a random vector whose probability distribution is supported
on a set $\mathcal{Z}\subset\R^p$, and the cost function 
$\phi:\R^d\times\mathcal{Z}\to\R$ is convex in~$w$ for every $z\in\mathcal{Z}$.
In general, evaluating the expected objective function with respect to~$z$
is intractable, even if the distribution is given.
The idea of SAA is to approximate the solution to~\eqref{eqn:stoch-opt} by 
solving a deterministic problem defined over a large number of i.i.d.\ 
(independent and identically distributed) samples 
generated from the distribution of~$z$ 
(see, \eg, \cite[Chapter~5]{Shapiro09book}).
Suppose our distributed computing system consists of~$m$ machines,
and each has access to~$n$ samples 
$z_{i,1},\ldots,z_{i,n}$, for $i=1,\ldots,m$.
Then each machine can evaluate a local empirical loss function
\[
    f_i(w) \eqdef \frac{1}{n} \sum_{j=1}^n \phi(w, z_{i,j}),
    \qquad i=1,\ldots,m.
\]
Our goal is to minimize the overall empirical loss defined with
all $mn$ samples:
\begin{equation}\label{eqn:average-obj}
        f(w) \eqdef \frac{1}{m}\sum_{i=1}^m f_i(w) 
        = \frac{1}{mn}\sum_{i=1}^m \sum_{j=1}^n \phi(w, z_{i,j}) .
\end{equation}

In machine learning applications, 
the probability distribution of~$z$ is usually unknown,
and the SAA approach is referred as \emph{empirical risk minimization} (ERM). 
As a concrete example, 
we consider ERM of linear predictors for supervised learning. 
In this case, each sample has the form $z_{i,j}=(x_{i,j},y_{i,j})\in\R^{d+1}$,
where $x_{i,j}\in\R^d$ is a feature vector and
$y_{i,j}$ can be a target response in~$\R$ (for regression)
or a discrete label (for classification).
Examples of the loss function include
\begin{itemize} \itemsep 0pt
\item linear regression: 
    $x\in\R^d$, $y\in\R$, and 
    $\phi(w, (x,y)) = (y - w^T x)^2$.
    \item logistic regression: 
    $x\in\R^d$, $y\in\{+1, -1\}$, and
    $\phi(w,(x,y))=\log(1+\exp(-y(w^T x)))$.
\item hinge loss: 
    $x\in\R^d$, $y\in\{+1,-1\}$, and 
    $\phi(w,(x,y)) = \max\left\{0,\; 1-y(w^T x)\right\}$.
\end{itemize}
For stability and generalization purposes, we often add a regularization term
$(\lambda/2)\|w\|_2^2$ to make the empirical loss function strongly convex.
More specifically, we modify the definition of $f_i(w)$ as
\begin{equation}\label{eqn:regularized-ERM}
    f_i(w) \eqdef \frac{1}{n} \sum_{j=1}^n \phi(w,z_{i,j}) 
    + \frac{\lambda}{2} \|w\|_2^2, \qquad i=1,\ldots,m.
\end{equation}
For example, when~$\phi$ is the hinge loss,
this formulation yields the \emph{support-vector machine} 
\cite{CortesVapnik95svm}.

Since the functions $f_i(w)$ can be accessed only locally,
we consider distributed algorithms that alternate between
a local computation procedure at each machine, and a communication round
involving simple map-reduce type of operations
\cite{Dean08MapReduce,MPIForum}.
Compared with local computation at each machine,
the cost of inter-machine communication is much higher
in terms of both speed/delay and energy consumption
(e.g., \cite{Bekkerman2011scaling,Shalf10});
thus it is often considered as the bottleneck for distributed computing.
Our goal is to develop \emph{communication-efficient} distributed algorithms, 
which try to use a minimal number of communication rounds 
to reach certain precision in minimizing $f(w)$.

\subsection{Communication efficiency of distributed algorithms}
\label{sec:communication-efficiency}

We assume that each communication round requires only simple map-reduce
type of operations, such as 
broadcasting a vector in $\R^d$ to the~$m$ machines and
computing the sum or average of~$m$ vectors in $\R^d$.
Typically, if a distributed iterative algorithm takes~$T$ iterations to 
converge, then it communicates at least~$T$ rounds
(usually one or two communication rounds per iteration).
Therefore, we can measure the communication efficiency of a distributed 
algorithm by its iteration complexity $T(\epsilon)$, which
is the number of iterations required by the algorithm to find a solution
$w_T$ such that $f(w_T)-f(w_\star)\leq\epsilon$.

For a concrete discussion, we make the following assumption:
\begin{assumption} \label{asmp:Hessian-bounds}
   The function $f:\R^d\to\R$ is twice continuously differentiable,
   and there exist constants $L\geq \lambda>0$ such that
   \[
       \lambda I \preceq f''(w) \preceq L I, \qquad \forall\,w\in\R^d,
   \]
   where $f''(w)$ denotes the Hessian of~$f$ at~$w$, and~$I$ is
   the $d\times d$ identity matrix.
\end{assumption}
Functions that satisfy Assumption~\ref{asmp:Hessian-bounds} are often called
$L$-smooth and $\lambda$-strongly convex.
The value $\kappa = L/\lambda \geq 1$ is called the \emph{condition number}
of~$f$, which is a key quantity in characterizing the complexity
of iterative algorithms. 
We focus on ill-conditioned cases where $\kappa\gg 1$.

A straightforward approach for minimizing $f(w)$ is 
distributed implementation of the classical gradient descent method.
More specifically, at each iteration~$k$, 
each machine computes the local gradient $f'_i(w_k)\in\R^d$
and sends it to a master node to compute
$f'(w_k)=(1/m)\sum_{i=1}^m f'_i(w_k)$. 
The master node takes a gradient step to compute $w_{k+1}$, 
and broadcasts it to each machine for the next iteration.
The iteration complexity of this method 
is the same as the classical gradient method:
$\order(\kappa\log(1/\epsilon))$,
which is linear in the condition number~$\kappa$ (\eg, \cite{Nesterov04book}).
If we use accelerated gradient methods 
\cite{Nesterov04book, Nesterov13composite, LinXiao14acclprox}, 
then the iteration complexity can be improved to
$\order(\sqrt{\kappa}\log(1/\epsilon))$.

Another popular technique for distributed optimization is to use the 
alternating direction method of multipliers (ADMM);
see, e.g., \cite[Section~8]{Boyd10ADMM}.
Under the assumption that each local function $f_i$ is $L$-smooth 
and $\lambda$-strongly convex, the ADMM approach can achieve linear convergence,
and the best known complexity is $\order(\sqrt{\kappa}\log(1/\epsilon))$
\cite{DengYin12linearADMM}.
This turns out to be the same order as for accelerated gradient methods.
In this case, ADMM can actually be considered as an accelerated primal-dual 
first-order method; see the discussions in \cite[Section~4]{ChambollePock11}.

The polynomial dependence of the iteration complexity
on the condition number can be unsatifactory.
For machine learning applications, both the precision~$\epsilon$ and 
the regularization parameter~$\lambda$ should decrease while
the overall sample size $mn$ increases, typically on the order of
$\Theta(1/\sqrt{mn})$ (\eg, \cite{BousquetElisseeff02,SSSS09stochastic}).
This translates into the condition number~$\kappa$ being $\Theta(\sqrt{mn})$.
In this case, the iteration complexity, 
and thus the number of communication rounds,
scales as $(mn)^{1/4}$ for both accelerated gradient methods and 
ADMM (with careful tuning of the penalty parameter).
This suggests that the number of communication rounds grows with the total
sample size. 

Despite the rich literature on distributed optimization 
(e.g., \cite{BertsekasTsitsiklis89book, RamNedic2010distributed,
Boyd10ADMM,agarwal2011distributed,duchi2012dual,dekel2012optimal,
recht2011hogwild,zhang2012communication,ShamirSrebro14}),
most algorithms involve high communication cost. 
In particular, their iteration complexity have similar or worse dependency 
on the condition number as the methods discussed above.
It can be argued that the iteration complexity 
$\order(\sqrt{\kappa}\log(1/\epsilon))$
cannot be improved in general for distributed first-order methods --- after all,
it is optimal for centralized first-order methods under the same 
assumption that $f(w)$ is $L$-smooth and $\lambda$-strongly convex
\cite{NemirovskiYudin83book,Nesterov04book}.
Thus in order to obtain better communication efficiency, we need to look into
further problem structure and/or alternative optimization methods. 
And we need both in this paper. 

First, we note that the above discussion on iteration complexity
does not exploit the fact that each function~$f_i$ is generated by,
or can be considered as, SAA of a stochastic optimization problem.
Since the data $z_{i,j}$ are i.i.d.\ samples from a common distribution,
the local empirical loss functions $f_i(w)=(1/n)\sum_{j=1}^n \phi(w, z_{i,j})$
will be similar to each other if the local sample size~$n$ is large. 
Under this assumption, Zhang et al.~\cite{zhang2012communication} studied
a one-shot averaging scheme that approximates the minimizer of function~$f$ 
by simply averaging the minimizers of $f_i$. 
For a fixed condition number,
the one-shot approach is communication efficient because it achieves optimal
dependence on the overall sample size $mn$
(in the sense of statistical lower bounds).
But their conclusion doesn't allow the regularization parameter $\lambda$ 
to decrease to zero as~$n$ goes to infinity 
(see discussions in \cite{ShamirSrebroZhang14DANE}).

Exploiting the stochastic nature alone seems not enough to
overcome ill-conditioning in the regime of first-order methods.
This motivates the development of distributed second-order methods.
Recently, Shamir et al.~\cite{ShamirSrebroZhang14DANE} 
proposed a distributed approximate Newton-type (DANE) method.
Their method takes advantage of the fact that,
under the stochastic assumptions of SAA, 
the Hessians $f''_1,f''_2,\dots,f''_m$ are similar to each other.
For quadratic loss functions, DANE is shown to converge in 
$\widetilde\order\bigl( (L/\lambda)^2 n^{-1}\log(1/\epsilon) \bigr)$ 
iterations with high probability, where the notation $\widetilde\order(\cdot)$
hides additional logarithmic factors involving~$m$ and~$d$.
If $\lambda \sim 1/\sqrt{mn}$ as in machine learning applications, 
then the iteration complexity becomes $\widetilde \order(m\log(1/\epsilon))$, 
which scales linearly with the number of machines~$m$,
not the total sample size $mn$.
However, the analysis in \cite{ShamirSrebroZhang14DANE} does not guarantee 
that DANE has the same convergence rate on non-quadratic functions.

\subsection{Outline of our approach}
\label{sec:outline}

In this paper, we propose a communication-efficient distributed
second-order method for minimizing the overall empirical loss $f(w)$
defined in~\eqref{eqn:average-obj}.
Our method is based on an \emph{inexact} damped Newton method.
Assume $f(w)$ is strongly convex and has continuous second derivatives.
In the \emph{exact} damped Newton method 
(e.g., \cite[Section~4.1.5]{Nesterov04book}),
we first choose an initial point $w_0\in\R^d$, 
and then repeat 
\begin{equation}\label{eqn:damped-Newton}
    w_{k+1} = w_k - \frac{1}{1+\delta(w_k)} \Delta w_k ,
\qquad k=0,1,2,\ldots,
\end{equation}
where $\Delta w_k$ and $\delta(w_k)$ are the Newton step and the Newton
decrement, respectively, defined as
\begin{align}
    \Delta w_k &= [f''(w_k)]^{-1} f'(w_k) \; , \nonumber \\
\delta(w_k) & = \sqrt{f'(w_k)^T [f''(w_k)]^{-1} f'(w_k)} 
=\sqrt{(\Delta w_k)^T f''(w_k) \Delta w_k} \; .
\label{eqn:Newton-decrement}
\end{align}
Since $f$ is the average of $f_1,\ldots,f_m$, 
its gradient and Hessian can be written as
\begin{equation}\label{eqn:Hessian-gradient}
    f'(w_k) = \frac{1}{m}\sum_{i=1}^m f'_i(w_k) , \qquad
    f''(w_k) = \frac{1}{m}\sum_{i=1}^m f''_i(w_k) .
\end{equation}

In order to compute $\Delta w_k$ in a distributed setting, 
the naive approach would require all the 
machines to send their gradients and Hessians to a master node
(say machine~1). 
However, the task of transmitting the Hessians (which are $d\times d$ matrices)
can be prohibitive for large dimensions~$d$.
A better alternative is to use the conjugate gradient (CG) method to compute 
$\Delta w_k$ as the solution to a linear system $f''(w_k) \Delta w_k=f'(w_k)$.
Each iteration of the CG method requires 
a matrix-vector product of the form
\[
    f''(w_k) v = \frac{1}{m} \sum_{i=1}^m f''_i(w_k) v ,
\]
where $v$ is some vector in $\R^d$.
More specifically, the master node can broadcast the vector~$v$ to each 
machine, each machine computes $f''_i(w_k) v\in\R^d$ locally and
sends it back to the master node,
which then forms the average $f''(w_k) v$ and performs the CG update. 
Due to the iterative nature of the CG method, we can only compute 
the Newton direction and Newton decrement approximately, especially
with limited number of communication rounds.

The overall method has two levels of loops: the outer-loop of the damped
Newton method, and the inner loop of the CG method
for computing the inexact Newton steps.
A similar approach (using a distributed truncated Newton method)
was proposed in \cite{ZhuangChinJuanLin14,LinTsaiLeeLin14}
for ERM of linear predictors, 
and it was reported to perform very well in practice.
However, the total number of CG iterations 
(each takes a round of communication) may still be high.

First, consider the outer loop complexity.
It is well-known that Newton-type methods have asymptotic superlinear
convergence. 
However, in classical analysis of Newton's method 
(e.g., \cite[Section~9.5.3]{BoydVandenberghe04book}),
the number of steps needed to reach the superlinear convergence zone
still depends on the condition number;
more specifically, it scales quadratically in~$\kappa$.
To solve this problem, we resort to the machinery of 
self-concordant functions \cite{NesterovNemirovski94book,Nesterov04book}.
For self-concordant empirical losses, we show that the iteration complexity
of the inexact damped Newton method has a much weaker dependence on
the condition number.

Second, consider the inner loop complexity. 
The convergence rate of the CG method also depends on 
the condition number~$\kappa$:
it takes $\order(\sqrt{\kappa}\log(1/\varepsilon))$ CG iterations to
compute an $\varepsilon$-precise Newton step.
Thus we arrive at the dilemma that the overall complexity of the CG-powered
inexact Newton method is no better than accelerated gradient methods or ADMM.
To overcome this difficulty, we exploit the stochastic nature of the problem 
and propose to use a preconditioned CG (PCG) method 
for solving the Newton system. 
Roughly speaking, if the local Hessians $f''_1(w_k),\ldots, f''_m(w_k)$ are 
``similar'' to each other, then we can use any local Hessian 
$f''_i(w_k)$ as a preconditioner. 
Without loss of generality, let $\pcd=f''_1(w_k)+\mu I$,
where~$\mu$ is an estimate of the spectral norm $\|f''_1(w_k)-f''(w_k)\|_2$.
Then we use CG to solve the pre-conditioned linear system
\[
    \pcd^{-1} f''(w_k) \Delta w_k = \pcd^{-1} f'(w_k),
\]
where the preconditioning (multiplication by $\pcd^{-1}$)
can be computed locally at machine~$1$ (the master node).
The convergence rate of PCG depends on the condition number of the matrix
$\pcd^{-1} f''(w_k)$, which is close to~1 if the spectral norm 
$\|f''_1(w_k)-f''(w_k)\|_2$ is small.

To exactly characterize the similarity between $f''_1(w_k)$ and $f''(w_k)$,
we rely on stochastic analysis in the framework of SAA or ERM.
We show that with high probability, $\|f''_1(w_k)-f''(w_k)\|_2$ 
decreases as $\widetilde\order(\sqrt{d/n})$ in general,
and $\widetilde\order(\sqrt{1/n})$ for quadratic loss.
Therefore, when~$n$ is large, the preconditioning is very effective and
the PCG method converges to sufficient precision
within a small number of iterations.
The stochastic assumption is also critical for obtaining an initial
point $w_0$ which further brings down the overall iteration complexity.

Combining the above ideas, we propose and analyze an algorithm for
Distributed Self-Concordant Optimization
(DiSCO, which also stands for Distributed Se\textsc{c}ond-Order method,
or Distributed Stochastic Convex Optimization).
We show that several popular empirical loss functions in machine learning,
including ridge regression, regularized logistic regression and 
a (new) smoothed hinge loss, are actually self-concordant.
For ERM with these loss functions, 
Table~\ref{tab:complexities} lists the number of communication rounds required
by DiSCO and several other algorithms to find an $\epsilon$-optimal solution.
As the table shows, the communication cost of DiSCO weakly depends on the 
number of machines~$m$ and on the feature dimension~$d$, 
and is independent of the local sample size~$n$ (excluding logarithmic factors).
Comparing to DANE \cite{ShamirSrebroZhang14DANE}, 
DiSCO not only improves the communication efficiency on quadratic loss, 
but also handles non-quadratic classification tasks. 

\begin{table}[t]
\renewcommand{\arraystretch}{1.2}
\begin{center}
	\begin{tabular}{|c|c|c|}
	  \hline
			 &	 \multicolumn{2}{c|}{Number of Communication Rounds $\widetilde\order(\cdot)$} \\\cline{2-3}
             Algorithm & Ridge Regression & Binary Classification\\[-1ex]
			& (quadratic loss) & (logistic loss, smoothed hinge loss)\\
            \hline
			Accelerated Gradient & $(mn)^{1/4}\log(1/\epsilon)$ & $(mn)^{1/4}\log(1/\epsilon)$\\
            \hline
			ADMM & $(mn)^{1/4}\log(1/\epsilon)$ & $(mn)^{1/4}\log(1/\epsilon)$\\
            \hline
            DANE \cite{ShamirSrebroZhang14DANE} & $ m \log(1/\epsilon)$ & $(mn)^{1/2}\log(1/\epsilon)$\\\hline
			DiSCO (this paper) & $ m^{1/4} \log(1/\epsilon)$ & $ m^{3/4}d^{1/4} + m^{1/4}d^{1/4}\log(1/\epsilon)$ \\
            \hline
	\end{tabular}
\end{center}
\caption{Communication efficiency of several distributed algorithms for ERM
of linear predictors, when the regularization parameter~$\lambda$
in~\eqref{eqn:regularized-ERM} is on the order of $1/\sqrt{mn}$.
All results are deterministic or high probability upper bounds,
except that the last one, DiSCO for binary classification, 
is a bound in expectation 
(with respect to the randomness in generating the i.i.d.\ samples).
For DiSCO, the dependence on~$\epsilon$ can be improved
to $\log\log(1/\epsilon)$ with superlinear convergence.}
\label{tab:complexities}
\end{table}

The rest of this paper is organized as follows.
In Section~\ref{sec:self-concordance}, 
we review the definition of self-concordant functions, and show that several 
popular empirical loss functions used in machine learning are either 
self-concordant or can be well approximated by self-concordant functions.
In Section~\ref{sec:approx-Newton}, 
we analyze the iteration complexity of an inexact damped Newton method 
for minimizing self-concordant functions.
In Section~\ref{sec:DiSCO-algorithm},
we show how to compute the inexact Newton step 
using a distributed PCG method, describe the overall DiSCO algorithm, 
and discuss its communication complexity.
In Section~\ref{sec:stochastic-analysis}, 
we present our main theoretical results based on stochastic analysis,
and apply them to linear regression and classification.
In Section~\ref{sec:experiments}, 
we report experiment results to illustrate the advantage of DiSCO in 
communication efficiency, compared with other algorithms listed in
Table~\ref{tab:complexities}.
Finally, we discuss the extension of DiSCO to distributed minimization of
composite loss functions in Section~\ref{sec:composite-minimization},
and conclude the paper in Section~\ref{sec:conclusion}.

\section{Self-concordant empirical loss}
\label{sec:self-concordance}

The theory of self-concordant functions were developed by 
Nesterov and Nemirovski for the analysis of interior-point methods
\cite{NesterovNemirovski94book}.
Roughly speaking, 
a function is called self-concordant if its third derivative can be controlled,
in a specific way, by its second derivative.
Suppose the function $f: \R^d\to \R$ has continuous third derivatives.
We use $\fpp(w)\in \R^{d\times d}$ to denote its Hessian at~$w\in \R^d$, 
and use $\fppp(w)[u]\in \R^{d\times d}$ to denote the limit
\[
\fppp(w)[u] \eqdef \lim_{t\to 0} \frac{1}{t}\big(\fpp(w+t u) - \fpp(w)\big).
\]

\begin{definition}\label{def:self-concordance}
A convex function $f:\R^d\to\R$ is self-concordant with parameter $M_f$ 
if the inequality
\begin{align*}
  \left|u^T (\fppp(w)[u]) u \right| \leq M_f \left( u^T \fpp(w) u \right)^{3/2}
\end{align*}
holds for any $w\in\dom(f)$ and $u\in\R^d$. 
In particular, a self-concordant function with parameter $2$
is called standard self-concordant.
\end{definition}

The reader may refer to the books 
\cite{NesterovNemirovski94book,Nesterov04book} 
for detailed treatment of self-concordance. 
In particular, the following lemma \cite[Corollary~4.1.2]{Nesterov04book}
states that any self-concordant function can be rescaled to become 
standard self-concordant.

\begin{lemma}\label{lemma:rescale-self-concordance}
If a function $f$ is self-concordant with parameter $M_f$, 
then $\frac{M_f^2}{4}f$ is standard self-concordant (with parameter $2$).
\end{lemma}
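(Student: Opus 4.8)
The plan is a direct substitution argument exploiting the different degrees of homogeneity on the two sides of the self-concordance inequality. Set $g \defeq \frac{M_f^2}{4} f$. Since differentiation is linear and scaling commutes with the limit defining $\fppp$, we immediately have $g''(w) = \frac{M_f^2}{4}\fpp(w)$ and $g'''(w)[u] = \frac{M_f^2}{4}\fppp(w)[u]$ for all $w$ and $u$. Convexity of $g$ is clear since $\frac{M_f^2}{4} > 0$.

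Next I would plug these into the defining inequality for standard self-concordance. Fix $w \in \dom(f)$ and $u \in \R^d$. Using the hypothesis that $f$ is self-concordant with parameter $M_f$,
\[
  \bigl| u^T (g'''(w)[u]) u \bigr|
  = \frac{M_f^2}{4} \bigl| u^T (\fppp(w)[u]) u \bigr|
  \leq \frac{M_f^2}{4} \cdot M_f \bigl( u^T \fpp(w) u \bigr)^{3/2}
  = \frac{M_f^3}{4} \bigl( u^T \fpp(w) u \bigr)^{3/2}.
\]
On the other hand, the target right-hand side is
\[
  2 \bigl( u^T g''(w) u \bigr)^{3/2}
  = 2 \left( \frac{M_f^2}{4} u^T \fpp(w) u \right)^{3/2}
  = 2 \cdot \frac{M_f^3}{8} \bigl( u^T \fpp(w) u \bigr)^{3/2}
  = \frac{M_f^3}{4} \bigl( u^T \fpp(w) u \bigr)^{3/2},
\]
so the two displayed quantities coincide and the inequality $|u^T(g'''(w)[u])u| \le 2(u^T g''(w) u)^{3/2}$ holds. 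Since $w$ and $u$ were arbitrary, $g = \frac{M_f^2}{4} f$ is standard self-concordant.

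There is essentially no obstacle here: the only thing to be slightly careful about is the bookkeeping of the scaling constant, namely that multiplying $f$ by $c$ scales the left-hand side of the self-concordance inequality by $c$ but scales $(u^T \fpp u)^{3/2}$ by $c^{3/2}$, so matching parameter $2$ forces $c^{1/2} = M_f/2$, i.e.\ $c = M_f^2/4$. One could also remark (though it is not needed for the statement) that the Hessian and third-derivative tensor of a non-twice/thrice differentiable point never arise because $f$ is assumed to have continuous third derivatives throughout $\R^d$, so $\dom(f) = \R^d$ and the argument applies pointwise everywhere.
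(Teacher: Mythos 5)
Your computation is correct: scaling $f$ by $c=M_f^2/4$ multiplies the left side of the self-concordance inequality by $c$ and the right side by $c^{3/2}$, and the constants match exactly to give parameter $2$. The paper does not give its own proof but simply cites \cite[Corollary~4.1.2]{Nesterov04book}, whose proof is this same direct scaling argument, so your approach is essentially the standard one.
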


In the rest of this section, we show that several popular regularized 
empirical loss functions for linear regression and binary classification
are either self-concordant 
or can be well approximated by self-concordant functions.

First we consider regularized linear regression (ridge regression) with
\[
f(w) = \frac{1}{N}\sum_{i=1}^N (y_i-w^T x_i)^2 + \frac{\lambda}{2}\|w\|_2^2 .
\]
To simplify notation, here we use a single subscript~$i$ running from~$1$ to 
$N=mn$, instead of the double subscripts $\{i,j\}$ used in the introduction.
Since~$f$ is a quadratic function, its third derivatives are all zero.
Therefore, it is self-concordant with parameter~0,
and by definition is also standard self-concordant.

For binary classification, we consider the following regularized
empirical loss function
\begin{align}\label{eqn:binary-class-erm}
	\ell(w) \eqdef \frac{1}{N} \sum_{i=1}^N \varphi(y_i w^T x_i) + \frac{\gamma}{2}\ltwos{w}^2,
\end{align}
where $x_i\in \mathcal{X}\subset\R^d$, $y_i\in\{-1,1\}$, and 
$\varphi: \R\to\R$ is a convex surrogate function for the binary loss function 
which returns~$0$ if $y_{i}=\sign(w^T x_{i})$ and~$1$ otherwise.
We further assume that the elements of $\mathcal{X}$ are bounded, that is,
we have $\sup_{x\in \mathcal{X}} \ltwos{x} \leq B$ for some finite $B$. 
Under this assumption, the following lemma shows that the regularized 
loss $\ell(w)$ is self-concordant.

\begin{lemma}
\label{lemma:regularized-self-concordance}
Assume that $\gamma > 0$ and there exist $Q>0$ and $\alpha \in [0,1)$ such that
$|\varphi'''(t)| \leq Q (\varphi''(t))^{1-\alpha}$ for every $t\in\R$. Then:
\begin{enumerate}
\item[(a)] The function~$\ell(w)$ defined by 
    equation~\eqref{eqn:binary-class-erm} is self-concordant with parameter 
    $\frac{B^{1+2\alpha} Q}{\gamma^{1/2+\alpha} }$.
\item[(b)] The scaled function 
    $f(w)=\frac{B^{2+4\alpha} Q^2}{4 \gamma^{1+2\alpha} }\ell(w)$ 
    is standard self-concordant.
\end{enumerate}
\end{lemma}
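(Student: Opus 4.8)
The plan is to prove part~(a) by a direct computation of the first three derivatives of $\ell$ along an arbitrary direction $u\in\R^d$, and then obtain part~(b) immediately from Lemma~\ref{lemma:rescale-self-concordance}. Writing $t_i = y_i w^T x_i$ and using $y_i\in\{-1,1\}$ (so $y_i^2=1$ and $y_i^3=y_i$), I would first record
\[
  \ell''(w) = \frac{1}{N}\sum_{i=1}^N \varphi''(t_i)\,x_i x_i^T + \gamma I,
  \qquad
  u^T\bigl(\ell'''(w)[u]\bigr)u = \frac{1}{N}\sum_{i=1}^N \varphi'''(t_i)\,y_i\,(x_i^T u)^3 .
\]
Taking absolute values and invoking the hypothesis $|\varphi'''(t)|\le Q(\varphi''(t))^{1-\alpha}$ gives
\[
  \bigl|u^T(\ell'''(w)[u])u\bigr| \;\le\; \frac{Q}{N}\sum_{i=1}^N \bigl(\varphi''(t_i)\bigr)^{1-\alpha}\,|x_i^T u|^3 ,
\]
and it remains to bound this by $M\,(u^T\ell''(w)u)^{3/2}$ with $M = B^{1+2\alpha}Q/\gamma^{1/2+\alpha}$.

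For the core estimate I would set $h_i = \varphi''(t_i)\ge 0$ (nonnegative by convexity of $\varphi$) and $a_i = (x_i^T u)^2\ge 0$, so that, since $u^T\ell''(w)u = \frac{1}{N}\sum_i h_i a_i + \gamma\|u\|_2^2$, the target inequality becomes
\[
  \frac{Q}{N}\sum_i h_i^{1-\alpha} a_i^{3/2} \;\le\; M\Bigl(\frac{1}{N}\sum_i h_i a_i + \gamma\|u\|_2^2\Bigr)^{3/2}.
\]
The three ingredients are: (i) the bound $a_i^{1/2} = |x_i^T u| \le \|x_i\|_2\|u\|_2 \le B\|u\|_2$, which lets me split $h_i^{1-\alpha}a_i^{3/2} = (h_i a_i)^{1-\alpha}\, a_i^{1/2+\alpha} \le B^{1+2\alpha}\|u\|_2^{1+2\alpha}(h_i a_i)^{1-\alpha}$; (ii) concavity of $t\mapsto t^{1-\alpha}$ on $[0,\infty)$ (valid since $0 < 1-\alpha \le 1$), so Jensen's inequality gives $\frac{1}{N}\sum_i(h_i a_i)^{1-\alpha}\le\bigl(\frac{1}{N}\sum_i h_i a_i\bigr)^{1-\alpha}$; and (iii) the elementary inequality $p^a q^b\le(p+q)^{a+b}$ for $p,q\ge 0$ and $a,b\ge 0$, applied with $p=\gamma\|u\|_2^2$, $q=\frac{1}{N}\sum_i h_i a_i$, $a = \tfrac12+\alpha$, $b = 1-\alpha$, noting that $a+b=\tfrac32$. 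Chaining (i)--(iii) and collecting constants produces exactly the factor $B^{1+2\alpha}Q\,\gamma^{-(1/2+\alpha)}$, which proves~(a). Part~(b) then follows by substituting $M_\ell = B^{1+2\alpha}Q/\gamma^{1/2+\alpha}$ into Lemma~\ref{lemma:rescale-self-concordance}, since $\frac{M_\ell^2}{4}\ell = \frac{B^{2+4\alpha}Q^2}{4\gamma^{1+2\alpha}}\ell$ is then standard self-concordant.

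I expect the only delicate point to be the bookkeeping of exponents: checking that the powers of $\|u\|_2^2$ and of $\frac{1}{N}\sum_i h_i a_i$ created by steps (i) and (ii) are precisely the ones on which step (iii) operates, i.e.\ that the split $\tfrac32 = (\tfrac12+\alpha) + (1-\alpha)$ is consistent, and verifying that the edge case $\alpha=0$ (where (ii) becomes an equality and the result recovers the familiar self-concordance constant for logistic-type losses) goes through unchanged. I would also state explicitly that $h_i\ge 0$ by convexity of $\varphi$, since this is what legitimizes both the use of Jensen in (ii) and the inequality in (iii).
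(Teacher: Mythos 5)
Your proposal is correct and follows essentially the same route as the paper's proof: the same decomposition $|x_i^T u|^3=\bigl((x_i^Tu)^2\bigr)^{1-\alpha}|x_i^Tu|^{1+2\alpha}$ with the bound $|x_i^Tu|\le B\|u\|_2$, the same Jensen step for the concave map $t\mapsto t^{1-\alpha}$, and the same use of the $\gamma$-strong convexity of $\ell$ to absorb the remaining power of $\|u\|_2$, followed by Lemma~\ref{lemma:rescale-self-concordance} for part~(b). Your final step packages the last two bounds into the single inequality $p^aq^b\le(p+q)^{a+b}$, whereas the paper bounds the two factors against $u^T\ell''(w)u$ separately, but this is only a cosmetic difference.
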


\begin{proof}
We need to bound the third derivative of $\ell$ appropriately.
Using equation~\eqref{eqn:binary-class-erm} and the assumption on~$\varphi$, 
we have
\begin{align*}
	\left| u^T (\ell'''(w)[u]) u \right| 
    &\leq \frac{1}{N}\sum_{i=1}^N \left|\varphi'''(y_i w^Tx_i) ( y_i u^T x_i)^3 \right| \\
    &\stackrel{(i)}{\leq} \frac{Q}{N}\sum_{i=1}^N \left((u^T x_i)^2\varphi''(y_i w^T x_i)\right)^{1-\alpha} (B \ltwos{u})^{1+2\alpha} \\
    &\stackrel{(ii)}{\leq} B^{1+2\alpha} Q \biggl( \frac{1}{N} \sum_{i=1}^N (u^T x_i)^2 \varphi''(y_i w^T x_i) \biggr)^{1-\alpha} (\ltwos{u})^{1+2\alpha} \\
    &\stackrel{(iii)}{\leq} B^{1+2\alpha} Q\, \left(u^T \ell''(w) u\right)^{1-\alpha} (\ltwos{u})^{1+2\alpha} .
\end{align*}
In the above derivation, 
inequality (i) uses the property that $|y_i|=1$ and $|u^T x_i|\leq B\ltwos{u}$,
inequality (ii) uses H\"{o}lder's inequality and concavity of 
$(\cdot)^{1-\alpha}$, and inequality (iii)
uses the fact that the additional regularization term in $\ell(w)$ is convex.

Since~$\ell$ is $\gamma$-strongly convex, we have 
$u^T \ell''(w) u \geq \gamma \ltwos{u}^2$. 
Thus, we can upper bound $\ltwos{u}$ by 
$\ltwos{u} \leq \gamma^{-1/2}(u^T \ell''(w) u)^{1/2}$. 
Substituting this inequality into the above upper bound completes the proof 
of part~(a). Given part~(a), part~(b) follows immediately from 
Lemma~\ref{lemma:rescale-self-concordance}.
\end{proof}

It is important to note that the self-concordance of~$\ell$ essentially 
relies on the regularization parameter~$\gamma$ being positive. 
If $\gamma = 0$, then the function will no longer be self-concordant, 
as pointed out by Bach~\cite{Bach10selfconcordance} on logistic regression.
Since we have the freedom to choose $\varphi$, 
Lemma~\ref{lemma:regularized-self-concordance} handles 
a broad class of empirical loss functions.
Next, we take the logistic loss and a smoothed hinge loss
as two concrete examples.

\paragraph{Logistic regression}
For logistic regression, we minimize the objective 
function~\eqref{eqn:binary-class-erm}
where $\varphi$ is the logistic loss: $\varphi(t) = \log(1+e^{-t})$. 
We can calculate the second and the third derivatives of $\varphi(t)$:
\begin{align*}
	\varphi''(t) &= \frac{e^t}{(e^t+1)^2}\;,\\
	\varphi'''(t) &= \frac{e^t(1-e^t)}{(e^t+1)^3} = \frac{1-e^t}{1+e^t}\varphi''(t)\;.
\end{align*}
Since $|\frac{1-e^t}{1+e^t}| \leq 1$ for all $t\in \R$, we conclude that 
$|\varphi'''(t)|\leq\varphi''(t)$ for all $t\in\R$.
This implies that the condition in 
Lemma~\ref{lemma:regularized-self-concordance} 
holds with $Q=1$ and $\alpha = 0$.
Therefore, the regularized empirical loss $\ell(w)$ is self-concordant with
parameter $B/\sqrt{\gamma}$, and the scaled loss function 
$f(w) = (B^2/(4\gamma))\ell(w)$ is standard self-concordant.

\begin{figure}
\centering
\includegraphics[width = 0.45\textwidth]{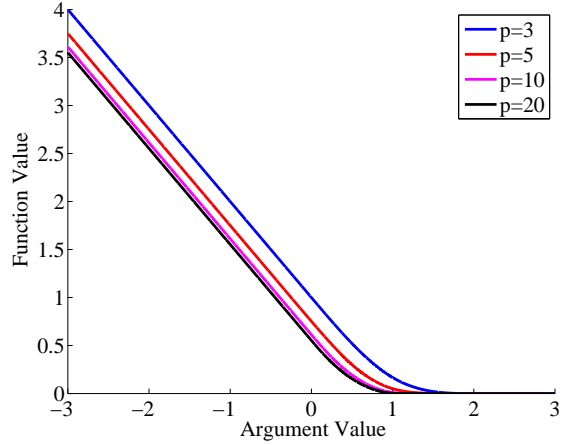}
\caption{Smoothed hinge loss $\varphi_p$ with $p = 3,5,10,20$. }
\label{fig:smooth-hinge}
\end{figure}

\paragraph{Smoothed hinge loss}
In classification tasks, it is sometimes more favorable to use the hinge loss 
$\varphi(t)=\max\{0, 1-t\}$ than using the logistic loss.
We consider a family of smoothed hinge loss functions $\varphi_p$ parametrized by a positive number $p \geq 3$.
The function is defined by
\begin{equation}\label{eqn:smoothed-hinge}
	\varphi_p(t) = \left\{
	\begin{array}{ll}
        \frac{3}{2} - \frac{p-2}{p-1} - t & \mbox{for $t < - \frac{p-3}{p-1}$},\\[1ex]
        \frac{3}{2} - \frac{p-2}{p-1} - t + \frac{(t + (p-3)/(p-1))^p}{p(p-1)} & \mbox{for $- \frac{p-3}{p-1} \leq t <  1 - \frac{p-3}{p-1}$},\\[1ex]
        \frac{p+1}{p(p-1)} - \frac{t}{p-1} + \frac{1}{2}(1-t)^2 & \mbox{for $1 - \frac{p-3}{p-1} \leq t < 1$},  \\[1ex]
        \frac{(2-t)^p}{p(p-1)} & \mbox{for $1 \leq t < 2$}, \\[1ex]
		0 & \mbox{for $t \geq 2$}.
	\end{array}
	\right.
\end{equation}
We plot the functions $\varphi_p$ for $p=3,5,10,20$ on 
Figure~\ref{fig:smooth-hinge}. 
As the plot shows, $\varphi_p(t)$ is zero for $t > 2$, 
and it is a linear function with unit slope for $t < - \frac{p-3}{p-1}$. 
These two linear zones are connected by three smooth non-linear segments
on the interval $[- \frac{p-3}{p-1},2]$.

The smoothed hinge loss $\varphi_p$ satisfies the condition of 
Lemma~\ref{lemma:regularized-self-concordance} with
$Q = p-2$ and $\alpha = \frac{1}{p-2}$.
To see this, we note that the third derivative of $\varphi_p(t)$ 
is nonzero only when 
$t\in [- \frac{p-3}{p-1}, 1- \frac{p-3}{p-1}]$ and when $t\in [1,2]$. 
On the first interval, we have
\[
	\varphi''_p(t) = \left(t + \frac{p-3}{p-1}\right)^{p-2}, \qquad
    \varphi'''_p(t) = (p-2)\left(t + \frac{p-3}{p-1}\right)^{p-3}.
\]
On the second interval, we have
\[
	\varphi''_p(t) = \left(2-t\right)^{p-2}, \qquad
    \varphi'''_p(t) = -(p-2)\left(2-t\right)^{p-3}.
\]
For both cases we have the inequality 
\[
    |\varphi'''_p(t)| \leq (p-2) (\varphi''_p(t))^{1 - \frac{1}{p-2}},
\]
which means $Q=p-2$ and $\alpha=\frac{1}{p-2}$.
Therefore, according to Lemma~\ref{lemma:regularized-self-concordance},
the regularized empirical loss $\ell(w)$ is self-concordant with parameter
\begin{equation}\label{eqn:smoothed-sc-p}
M_p = \frac{(p-2)B^{1+\frac{2}{p-2}}}{\gamma^{\frac{1}{2}+\frac{1}{p-2}}} \; ,
\end{equation}
and the scaled loss function $f(w) = (M_p^2/4)\ell(w)$ is standard 
self-concordant.


\section{Inexact damped Newton method}
\label{sec:approx-Newton}

\begin{algorithm}[t]
\DontPrintSemicolon
\vspace{2pt}
\textbf{input:} initial point $w_0$
    and specification of a nonnegative sequence $\{\epsilon_k\}$. \;
\vspace{2pt}
\textbf{repeat} for $k = 0,1,2,\dots$ 
\begin{enumerate} \itemsep 0pt
  \item Find a vector $v_k$ such that 
   $\ltwos{\fpp(w_k)v_k - \fp(w_k)}\leq\epsilon_k$.
  \item Compute $\delta_k=\sqrt{v_k^T f''(w_k) v_k}$ 
        and update $w_{k+1} = w_k - \frac{1}{1 + \delta_k}v_k$.
\end{enumerate}
\vspace{-5pt}
\textbf{until} a stopping criterion is satisfied.
\caption{Inexact damped Newton method}
\label{alg:damped-newton-method}
\end{algorithm}

In this section, we propose and analyze an inexact damped Newton method
for minimizing self-concordant functions. 
Without loss of generality, we assume the objective function
$f:\R^d\to\R$ is standard self-concordant.
In addition, we assume that Assumption~\ref{asmp:Hessian-bounds} holds.
Our method is described in Algorithm~\ref{alg:damped-newton-method}.
If we let $\epsilon_k=0$ for all $k\geq0$, then $v_k=[f''(w_k)]^{-1} f'(w_k)$
is the exact Newton step and $\delta_k$ is the Newton decrement
defined in~\eqref{eqn:Newton-decrement},
so the algorithm reduces to the exact damped Newton method
given in~\eqref{eqn:damped-Newton}.
But here we allow the computation of the Newton step 
(hence also the Newton decrement)
to be inexact and contain approximation errors. 

The explicit account of approximation errors is essential for 
distributed optimization.
In particular, if~$f(w)=(1/m)\sum_{i=1}^m f_i(w)$ and the components~$f_i$ 
locate on separate machines, then we can only perform Newton updates 
approximately with limited communication budget.
Even in a centralized setting on a single machine, 
analysis of approximation errors can be important
if the Newton system is solved by iterative algorithms such as 
the conjugate gradient method.

Before presenting the convergence analysis, 
we need to introduce two auxiliary functions
\begin{align*}
    \omega(t)   & = t - \log(1+t), \qquad  t\geq 0, \\
    \omega_*(t) & = -t - \log(1-t), \qquad 0\leq t < 1.
\end{align*}
These two functions are very useful for characterizing the properties of 
self-concordant functions; 
see \cite[Section~4.1.4]{Nesterov04book} for a detailed account.
Here, we simply note that $\omega(0)=\omega_*(0)=0$,
both are strictly increasing for $t\geq 0$,
and $\omega_*(t)\to\infty$ as $t\to 1$.

We also need to define two auxiliary vectors
\begin{align*}
    \utilde_k & = [\fpp(w_k)]^{-1/2}\fp(w_k), \\
    \vtilde_k & = [\fpp(w_k)]^{1/2}v_k.
\end{align*}
The norm of the first vector,
$\|\utilde_k\|_2=\sqrt{f'(w_k)^T [f''(w_k)]^{-1} f'(w_k)}$,
is the exact Newton decrement.
The norm of the second one is $\ltwos{\vtilde_k}=\delta_k$,
which is computed during each iteration of 
Algorithm~\ref{alg:damped-newton-method}.
Note that we do \emph{not} compute $\utilde_k$ or $\vtilde_k$
in Algorithm~\ref{alg:damped-newton-method}.
They are introduced solely for the purpose of convergence analysis.
The following Theorem is proved in 
Appendix~\ref{sec:proof-newton-method-convergence-rate}.

\begin{theorem}\label{thm:damped-newton-convergence}
    Suppose $f:\R^d\to\R$ is a standard self-concordant function 
    and Assumption~\ref{asmp:Hessian-bounds} holds. 
    If we choose the sequence $\{\epsilon_k\}_{k\geq 0}$ in
    Algorithm~\ref{alg:damped-newton-method} as
\begin{align}\label{eqn:choose-epsilon-k}
	\epsilon_k =  \beta (\reg/\hessianbound)^{1/2}\ltwos{\fp(w_k)}\quad \mbox{with $\beta = 1/20$},
\end{align}
then:
\begin{enumerate}
    \item[(a)] For any $k\geq 0$, we have $f(w_{k+1}) \leq f(w_k) - \frac{1}{2}\omega(\ltwos{\utilde_k})$.
    \item[(b)] If $\ltwos{\utilde_k} \leq 1/6$, then we have $\omega(\ltwos{\utilde_{k+1}}) \leq \frac{1}{2}\omega(\ltwos{\utilde_{k}})$.
\end{enumerate}
\end{theorem}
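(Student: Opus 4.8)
The plan is to push the standard self-concordant calculus (\cite[Section~4.1]{Nesterov04book}) through one step of Algorithm~\ref{alg:damped-newton-method}, carefully tracking the residual $r_k\defeq\fpp(w_k)v_k-\fp(w_k)$, which satisfies $\ltwos{r_k}\le\epsilon_k$. Throughout write $\norms{h}_w=\sqrt{h^T\fpp(w)h}$ for the local norm at $w$ and $\norms{h}_w^*=\sqrt{h^T[\fpp(w)]^{-1}h}$ for its dual, so that $\ltwos{\utilde_k}=\norms{\fp(w_k)}_{w_k}^*$ is the exact Newton decrement and $\delta_k=\norms{v_k}_{w_k}=\ltwos{\vtilde_k}$. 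The key first step is to turn the inexactness into a \emph{relative} error on the decrement: multiplying $\fpp(w_k)v_k-\fp(w_k)=r_k$ by $[\fpp(w_k)]^{-1/2}$ gives $\vtilde_k-\utilde_k=[\fpp(w_k)]^{-1/2}r_k$, hence $\ltwos{\vtilde_k-\utilde_k}\le\reg^{-1/2}\epsilon_k$; since $\ltwos{\fp(w_k)}\le\hessianbound^{1/2}\ltwos{\utilde_k}$ by Assumption~\ref{asmp:Hessian-bounds}, the prescribed choice~\eqref{eqn:choose-epsilon-k} yields $\ltwos{\vtilde_k-\utilde_k}\le\beta\ltwos{\utilde_k}$, and therefore $(1-\beta)\ltwos{\utilde_k}\le\delta_k\le(1+\beta)\ltwos{\utilde_k}$. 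The same bounds give $\norms{r_k}_{w_k}^*\le\beta\ltwos{\utilde_k}$ and $|r_k^Tv_k|\le\epsilon_k\ltwos{v_k}\le\beta\ltwos{\utilde_k}\,\delta_k$; these are the only points at which the tolerance enters the remaining analysis.

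For part~(a), note $\norms{w_{k+1}-w_k}_{w_k}=\delta_k/(1+\delta_k)<1$ for every $k$, so the self-concordant upper bound $f(y)\le f(x)+\fp(x)^T(y-x)+\omega_*(\norms{y-x}_x)$ applies with $x=w_k$, $y=w_{k+1}$. Using $\omega_*\!\bigl(\tfrac{\delta}{1+\delta}\bigr)=\log(1+\delta)-\tfrac{\delta}{1+\delta}$, the identity $\fp(w_k)^Tv_k=\delta_k^2-r_k^Tv_k$, and the bound on $|r_k^Tv_k|$ above, one arrives at
\[
    f(w_{k+1})\;\le\;f(w_k)\;-\;\frac{\frac{1-2\beta}{1-\beta}\,\delta_k^2+\delta_k}{1+\delta_k}\;+\;\log(1+\delta_k),
\]
whose right-hand side is exactly $f(w_k)-\omega(\delta_k)$ when $\beta=0$. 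Since $\delta_k$ and $\ltwos{\utilde_k}$ agree up to a factor $1\pm\beta$, the claim $f(w_{k+1})\le f(w_k)-\tfrac12\omega(\ltwos{\utilde_k})$ reduces to a single-variable inequality in $\delta_k$, which holds for $\beta=1/20$.

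For part~(b), suppose $\ltwos{\utilde_k}\le1/6$ and set $h_k=\tfrac{1}{1+\delta_k}v_k$, so $\rho_k\defeq\norms{h_k}_{w_k}=\tfrac{\delta_k}{1+\delta_k}\le\delta_k\le(1+\beta)/6$ is small. Expanding $\fp(w_{k+1})=\fp(w_k)-\int_0^1\fpp(w_k-th_k)h_k\,dt$ and using $\fp(w_k)-\fpp(w_k)h_k=\tfrac{\delta_k}{1+\delta_k}\fp(w_k)-\tfrac{1}{1+\delta_k}r_k$ gives $\fp(w_{k+1})=\tfrac{\delta_k}{1+\delta_k}\fp(w_k)-\tfrac{1}{1+\delta_k}r_k+\int_0^1\bigl(\fpp(w_k)-\fpp(w_k-th_k)\bigr)h_k\,dt$. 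The dual $w_k$-norm of the integral term is at most $\int_0^1\bigl((1-t\rho_k)^{-2}-1\bigr)\rho_k\,dt=\tfrac{\rho_k^2}{1-\rho_k}$, using the Hessian comparison $(1-t\rho_k)^2\fpp(w_k)\preceq\fpp(w_k-th_k)\preceq(1-t\rho_k)^{-2}\fpp(w_k)$; transporting the $w_k$-norm to the $w_{k+1}$-norm through the same comparison at scale $\rho_k$ then produces a recursion of the form $\ltwos{\utilde_{k+1}}\le\tfrac{1}{1-\rho_k}\bigl((\rho_k+\beta)\ltwos{\utilde_k}+\tfrac{\rho_k^2}{1-\rho_k}\bigr)$. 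Substituting $\rho_k\le(1+\beta)\ltwos{\utilde_k}$, $\ltwos{\utilde_k}\le1/6$ and $\beta=1/20$ yields $\ltwos{\utilde_{k+1}}\le\tfrac{2}{3}\ltwos{\utilde_k}$; since $\tfrac{4}{9}t^2\le\omega(t)\le\tfrac{1}{2}t^2$ on $[0,1/6]$, this gives $\omega(\ltwos{\utilde_{k+1}})\le\tfrac12\omega(\ltwos{\utilde_k})$.

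These structural steps are routine self-concordant calculus; the real work --- and the main obstacle --- is verifying that the prescribed constant $\beta=1/20$ and the threshold $1/6$ actually close the two single-variable inequalities, since each use of Assumption~\ref{asmp:Hessian-bounds} or of self-concordance injects multiplicative factors $(1\pm\beta)$ and $1/(1-\rho_k)$, and the estimate $\tfrac{4}{9}t^2\le\omega(t)\le\tfrac{1}{2}t^2$ (together with comparisons of $\omega$ at arguments differing by a factor $1\pm\beta$) must hold over the full range where $\omega(t)$ interpolates between $t^2/2$ and $t$. A secondary subtlety in part~(b) is that $\ltwos{\utilde_{k+1}}$ is measured through $\fpp(w_{k+1})$, so every quantity first bounded in the $w_k$-norm has to be carried over to the $w_{k+1}$-norm via the Hessian comparison before the terms can be combined.
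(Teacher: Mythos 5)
Your proposal is correct. Part~(a) follows essentially the paper's own route: both start from the self-concordant upper bound $f(w_{k+1})\le f(w_k)+\langle \fp(w_k),w_{k+1}-w_k\rangle+\omega_*(\ltwos{\vtilde_k}/(1+\ltwos{\vtilde_k}))$ and the relative-error bound $\ltwos{\vtilde_k-\utilde_k}\le\beta\ltwos{\utilde_k}$; you organize the error through $\fp(w_k)^Tv_k=\delta_k^2-r_k^Tv_k$ while the paper splits off $\omega(\ltwos{\utilde_k})-\omega(\ltwos{\vtilde_k})$ and a cross term and controls them with a second-order mean-value expansion of $\omega$, arriving at the explicit loss factor $(4\beta+5\beta^2)/(1-\beta)$. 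Your final single-variable inequality, though asserted rather than verified, does hold with room to spare at $\beta=1/20$ (to leading order it amounts to $2\cdot\frac{18}{19}-1\ge\frac{1}{2(1-\beta)^2}$).

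Part~(b) is where you take a genuinely different route. You propagate the \emph{gradient}: writing $\fp(w_{k+1})$ via the integral of the Hessian along the step, bounding the curvature error by $\rho_k^2/(1-\rho_k)$ through the self-concordant Hessian comparison, and transporting dual norms from $w_k$ to $w_{k+1}$, you obtain the direct contraction $\ltwos{\utilde_{k+1}}\le\tfrac{2}{3}\ltwos{\utilde_k}$ and then convert to $\omega$ via $\tfrac{4}{9}t^2\le\omega(t)\le\tfrac12 t^2$ on $[0,1/6]$ (the constants close exactly: $\tfrac12(\tfrac23)^2=\tfrac12\cdot\tfrac49$). The paper instead never touches $\fp(w_{k+1})$: it combines the per-step decrease from part~(a) with the sandwich $\omega(\ltwos{\utilde_k})\le f(w_k)-f(\wstar)\le\omega_*(\ltwos{\utilde_k})$ to get $\omega(\ltwos{\utilde_{k+1}})\le\omega_*(\ltwos{\utilde_k})-\omega(\ltwos{\utilde_k})+\tfrac{4\beta+5\beta^2}{1-\beta}\omega(\ltwos{\utilde_k})$, and finishes with the elementary facts $\omega_*(t)-\omega(t)\le 0.26\,\omega(t)$ for $t\le 1/6$ and $(4\beta+5\beta^2)/(1-\beta)\le 0.23$. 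The paper's argument is shorter and avoids all norm-transport bookkeeping; yours yields the stronger and more transparent intermediate statement of a contraction on the Newton decrement itself, which is the natural starting point for the superlinear refinement in Appendix~\ref{sec:superlinear-convergence}. Both are valid; I see no gap.
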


As mentioned before, when $\epsilon_k=0$, 
the vector $v_k=[f''(w_k)]^{-1}f'(w_k)$ becomes the exact Newton step.
In this case, we have $\vtilde_k=\utilde_k$, and
it can be shown that 
$f(w_{k+1})\leq f(w_k) - \omega(\ltwos{\utilde_k})$ for all $k\geq 0$
and the exact damped Newton method has quadratic convergence 
when $\ltwos{\utilde_k}$ is small (see \cite[Section~4.1.5]{Nesterov04book}).
With the approximation error~$\epsilon_k$ specified 
in~\eqref{eqn:choose-epsilon-k},  we have
\begin{align*}
\ltwos{\vtilde_k - \utilde_k} &\leq \ltwos{(\fpp(w_k))^{-1/2}}\ltwos{\fpp(w_k)v_k - \fp(w_k)} \leq \reg^{-1/2} \epsilon_k \\&
	= \beta \hessianbound^{-1/2} \ltwos{\fp(w_k)} \leq \beta \ltwos{\utilde_k},
\end{align*}
which implies
\begin{equation}\label{eqn:bound-vk-by-uk}
    (1-\beta)\ltwos{\utilde_k} \leq \ltwos{\vtilde_k} \leq (1+\beta)\ltwos{\utilde_k}.
\end{equation}
Appendix~\ref{sec:proof-newton-method-convergence-rate} shows that 
when~$\beta$ is sufficiently small, the above inequality leads to 
the conclusion in part~(a).
Compared with the exact damped Newton method,
the guaranteed reduction of the objective value per iteration is cut by half.

Part~(b) of Theorem~\ref{thm:damped-newton-convergence} suggests a linear rate
of convergence when $\ltwos{\utilde_k}$ is small. 
This is slower than the quadratic convergence rate of the exact damped
Newton method, due to the allowed approximation errors in computing 
the Newton step. 
Actually, superlinear convergence can be established if 
we set the tolerances $\epsilon_k$ to be small enough;
see Appendix~\ref{sec:superlinear-convergence} for detailed analysis.
However, when~$v_k$ is computed through a distributed iterative algorithm
(like the distributed PCG algorithm in Section~\ref{sec:distributed-pcg}),
a smaller~$\epsilon_k$ would require more local computational effort 
and more rounds of inter-machine communication.
The choice in equation~\eqref{eqn:choose-epsilon-k} is a reasonable trade-off
in practice.

Using Theorem~\ref{thm:damped-newton-convergence}, we can derive
the iteration complexity of Algorithm~\ref{alg:damped-newton-method}
for obtaining an arbitrary accuracy. 
We present this result as a corollary.

\begin{corollary}\label{coro:damped-newton-complexity}
    Suppose $f:\R^d\to\R$ is a standard self-concordant function 
    and Assumption~\ref{asmp:Hessian-bounds} holds. 
    If we choose the sequence $\{\epsilon_k\}$ 
    in Algorithm~\ref{alg:damped-newton-method}
    as in~\eqref{eqn:choose-epsilon-k},
    then for any $\epsilon >0$, 
    we have $f(w_k)-f(w_\star)\leq\epsilon$ whenever $k\geq K$ where
\begin{align}\label{eqn:damped-newton-complexity}
	K = \left\lceil \frac{f(w_0) - f(\wstar)}{\frac{1}{2}\omega(1/6)} \right\rceil + \left\lceil \log_2 \Big( \frac{ 2\,\omega(1/6)}{\epsilon}\Big) \right\rceil .
\end{align}
Here $\lceil t\rceil$ denotes the smallest \emph{nonnegative} integer
that is larger or equal to~$t$.
\end{corollary}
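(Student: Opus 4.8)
The plan is to run the analysis in two phases separated by the first time the exact Newton decrement $\ltwos{\utilde_k}$ drops to $1/6$, using part~(a) of Theorem~\ref{thm:damped-newton-convergence} to control the first phase and part~(b) to control the second, and then to convert the resulting bound on $\ltwos{\utilde_k}$ into a bound on $f(w_k)-f(\wstar)$ via a standard self-concordance estimate. Set $k_0:=\min\{k\ge 0:\ltwos{\utilde_k}\le 1/6\}$; this set is nonempty, since otherwise part~(a) would give $f(w_{k+1})\le f(w_k)-\tfrac12\omega(1/6)$ for every $k$, forcing $f(w_k)\to-\infty$ and contradicting $f(w_k)\ge f(\wstar)$, which is finite because $f$ is strongly convex under Assumption~\ref{asmp:Hessian-bounds}.

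For the first phase, I would note that minimality of $k_0$ gives $\ltwos{\utilde_k}>1/6$ for all $k<k_0$, so part~(a) together with monotonicity of $\omega$ yields $f(w_{k+1})\le f(w_k)-\tfrac12\omega(1/6)$ for $k=0,\dots,k_0-1$. Telescoping and using $f(w_{k_0})\ge f(\wstar)$ gives $k_0\cdot\tfrac12\omega(1/6)\le f(w_0)-f(\wstar)$, hence $k_0\le K_1:=\lceil (f(w_0)-f(\wstar))/(\tfrac12\omega(1/6))\rceil$ (vacuously $k_0=0$ if $\ltwos{\utilde_0}\le 1/6$).

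For the second phase, I would first check that $\{\,\ltwos{\utilde_k}\le 1/6\,\}$ is forward invariant: if $\ltwos{\utilde_k}\le 1/6$ then part~(b) gives $\omega(\ltwos{\utilde_{k+1}})\le\tfrac12\omega(\ltwos{\utilde_k})\le\omega(1/6)$, and monotonicity of $\omega$ forces $\ltwos{\utilde_{k+1}}\le 1/6$. So by induction $\ltwos{\utilde_k}\le 1/6$ for all $k\ge k_0$, and iterating part~(b) gives the geometric decay $\omega(\ltwos{\utilde_k})\le 2^{-(k-k_0)}\,\omega(1/6)$. To finish, I would use the standard self-concordance inequality $f(w_k)-f(\wstar)\le\omega_*(\ltwos{\utilde_k})$, valid whenever the Newton decrement $\ltwos{\utilde_k}<1$ (see \cite[Section~4.1.4]{Nesterov04book}), together with the elementary bound $\omega_*(t)\le 2\,\omega(t)$ on $[0,1/3)$ — which holds because $g(t):=2\omega(t)-\omega_*(t)$ satisfies $g(0)=0$ and $g'(t)=t(1-3t)/(1-t^2)\ge 0$ there. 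Since $\ltwos{\utilde_k}\le 1/6<1/3$ for $k\ge k_0$, this gives $f(w_k)-f(\wstar)\le 2\,\omega(\ltwos{\utilde_k})\le 2^{1-(k-k_0)}\,\omega(1/6)$, which is at most $\epsilon$ as soon as $k-k_0\ge\log_2(2\omega(1/6)/\epsilon)$, i.e.\ as soon as $k\ge k_0+K_2$ with $K_2:=\lceil\log_2(2\omega(1/6)/\epsilon)\rceil$ (the nonnegative ceiling convention makes this correct also when $\epsilon$ is large). Combining with $k_0\le K_1$ yields $f(w_k)-f(\wstar)\le\epsilon$ for all $k\ge K_1+K_2=K$, as in~\eqref{eqn:damped-newton-complexity}.

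The step I expect to require the most care is the conversion: one must select the correct self-concordance estimate relating the optimality gap to the Newton decrement and keep track of constants, so that the factor $2$ in $\omega_*\le 2\,\omega$ on $[0,1/3)$ is precisely what produces the $2\,\omega(1/6)$ inside the logarithm. Everything else — the telescoping count in phase one and the geometric contraction in phase two — is routine given Theorem~\ref{thm:damped-newton-convergence}, aside from minor bookkeeping with the ceilings and the degenerate case $w_0=\wstar$.
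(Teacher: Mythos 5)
Your proposal is correct and follows essentially the same route as the paper's proof: part~(a) with telescoping bounds the number of iterations until $\ltwos{\utilde_k}\le 1/6$, part~(b) gives geometric decay of $\omega(\ltwos{\utilde_k})$ thereafter, and the sandwich $f(w_k)-f(\wstar)\le\omega_*(\ltwos{\utilde_k})\le 2\,\omega(\ltwos{\utilde_k})$ converts this into the stated bound. Your explicit verification of the forward invariance of $\{\ltwos{\utilde_k}\le 1/6\}$ and of the inequality $\omega_*\le 2\,\omega$ only spells out steps the paper leaves implicit.
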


\begin{proof}
Since $\omega(t)$ is strictly increasing for $t\geq0$, 
part~(a) of Theorem~\ref{thm:damped-newton-convergence} implies that
if $\|\utilde_k\|_2>1/6$, one step of Algorithm~\ref{alg:damped-newton-method}
decreases the value of $f(w)$ by at least a constant $\frac{1}{2}\omega(1/6)$.
So within at most 
$K_1 = \lceil \frac{f(w_0) - f(\wstar)}{\frac{1}{2}\omega(1/6)} \rceil$ 
iterations, we are guaranteed that $\ltwos{\utilde_k} \leq 1/6$. 

According to \cite[Theorem~4.1.13]{Nesterov04book}, 
if $\ltwos{\utilde_k}<1$, then we have
\begin{equation}\label{eqn:opt-gap-bounds}
    \omega(\ltwos{\utilde_k}) \leq f(w_k)-f(w_\star) \leq \omega_*(\ltwos{\utilde_k}) .
\end{equation}
Moreover, it is easy to check that $\omega_*(t)\leq 2\,\omega(t)$ for 
$0\leq t\leq 1/6$.
Therefore, using part~(b) of Theorem~\ref{thm:damped-newton-convergence},
we conclude that when $k\geq K_1$, 
\begin{align*}
    f(w_k) - f(w_\star) ~\leq~ 2\,\omega(\ltwos{\utilde_k})
    ~\leq~ 2 (1/2)^{k-K_1} \omega(\ltwos{\utilde_{K_1}})
    ~\leq~ 2 (1/2)^{k-K_1} \omega(1/6) .
\end{align*}
Bounding the right-hand side of the above inequality by~$\epsilon$, 
we have $f(w_k)-f(w_\star)\leq\epsilon$ whenever
$k\geq K_1+\left\lceil\log_2\left(\frac{2\,\omega(1/6)}{\epsilon}\right) \right\rceil = K$,
which is the desired result.

We note that when $\ltwos{\utilde_k}\leq 1/6$ (as long as $k\geq K_1$), 
we have $f(w_k)-f(\wstar)\leq 2\,\omega(1/6)$.
Thus for $\epsilon>2\,\omega(1/6)$, it suffices to have $k\geq K_1$.
\end{proof}

\subsection{Stopping criteria}

We discuss two stopping criteria for Algorithm~\ref{alg:damped-newton-method}.
The first one is based on the strong convexity of~$f$, which leads to
the inequality (e.g., \cite[Theorem~2.1.10]{Nesterov04book})
\[
    f(w_k)-f(w_\star) \leq \frac{1}{2\lambda}\|f'(w_k)\|_2^2 .
\]
Therefore, we can use the stopping criterion 
$\|f'(w_k)\|_2 \leq \sqrt{2\lambda\epsilon}$,
which implies $f(w_k)-f(w_\star)\leq\epsilon$.
However, this choice can be too conservative in practice
(see discussions in \cite[Section~9.1.2]{BoydVandenberghe04book}).

Another choice for the stopping criterion is based on self-concordance.
Using the fact that $\omega_*(t)\leq t^2$ for $0\leq t\leq 0.68$
(see \cite[Section~9.6.3]{BoydVandenberghe04book}), we have
\begin{equation}\label{eqn:self-concordant-stop}
    f(w_k)-f(w_\star) \leq \omega_*(\ltwos{\utilde_k}) 
    \leq \ltwos{\utilde_k}^2
\end{equation}
provided $\ltwos{\utilde_k}\leq 0.68$.
Since we do not compute $\ltwos{\utilde_k}$ (the exact Newton decrement)
directly in Algorithm~\ref{alg:damped-newton-method}, 
we can use~$\delta_k$ as an approximation.
Using the inequality~\eqref{eqn:bound-vk-by-uk}, and noticing that 
$\ltwos{\vtilde_k}=\delta_k$, we conclude that
\[
    \delta_k \leq (1-\beta) \sqrt{\epsilon} 
\]
implies $f(w_k)-f(\wstar)\leq\epsilon$ when $\epsilon\leq 0.68^2$.
Since~$\delta_k$ is computed at each iteration of 
Algorithm~\ref{alg:damped-newton-method}, this can serve as a good 
stopping criterion.

\subsection{Scaling for non-standard self-concordant functions}
\label{sec:inexact-newton-scaling}

In many applications, we need to deal with empirical loss functions that are
not standard self-concordant; 
see the examples in Section~\ref{sec:self-concordance}.
Suppose a regularized loss function~$\ell(w)$ is self-concordant with 
parameter~$M_\ell>2$. 
By Lemma~\ref{lemma:rescale-self-concordance}, the scaled function
$f = \eta \ell$ with $\eta = M_\ell^2/4$ is standard self-concordant.
We can apply Algorithm~\ref{alg:damped-newton-method} 
to minimize the scaled function~$f$, and rewrite it
in terms of the function~$\ell$ and the scaling constant~$\eta$.

Using the sequence $\{\epsilon_k\}$ defined in~\eqref{eqn:choose-epsilon-k}, 
the condition for computing~$v_k$ in Step~1 is
\[
    \|f''(w_k) v_k - f'(w_k)\|_2 \leq \beta (\lambda/L)^{1/2} \|f'(w_k)\|_2.
\]
Let $\lambda_\ell$ and $\hessianbound_\ell$ be the strong convexity and 
smoothness parameters of the function $\ell$. 
With the scaling, we have $\lambda=\eta\lambda_\ell$ and $L=\eta L_\ell$, 
thus their ratio (the condition number) does not change.
Therefore the above condition is equivalent to
\begin{align}\label{eqn:condition-for-vk-before-rescaling}
	\ltwos{\ell''(w_k)v_k - \ell'(w_k)} \leq \beta(\lambda_{\ell}/\hessianbound_\ell)^{1/2} \ltwos{\ell'(w_k)} .
\end{align}
In other words, the precision requirement in Step~1 is \emph{scaling invariant}.

Step~2 of Algorithm~\ref{alg:damped-newton-method} can be rewritten as
\begin{align}\label{eqn:update-wk-before-rescaling}
 w_{k+1} = w_k - \frac{v_k}{1 + \sqrt{\eta} \cdot \sqrt{v_k^T\ell''(w_k)v_k}}.
\end{align}
Here, the factor~$\eta$ explicitly appears in the formula. 
By choosing a larger scaling factor~$\eta$, 
the algorithm chooses a smaller stepsize. 
This adjustment is intuitive because the convergence of Newton-type method 
relies on local smoothness conditions. 
By multiplying a large constant to $\ell$, the function's Hessian becomes 
less smooth, so that the stepsize should shrink.

In terms of complexity analysis, 
if we target to obtain $\ell(w_k)-\ell(w_\star)\leq\epsilon$,
then the iteration bound in~\eqref{eqn:damped-newton-complexity} becomes
\begin{equation}\label{eqn:scaled-complexity}
    \left\lceil 
    \frac{\eta \bigl( \ell(w_0) - \ell(\wstar)\bigr)}{\frac{1}{2}\omega(1/6)} 
    \right\rceil  
     + \left\lceil 
     \log_2 \Big( \frac{ 2\,\omega(1/6)}{\eta\epsilon}\Big)
    \right\rceil .
\end{equation}
For ERM problems in supervised learning, the self-concordant parameter~$M_\ell$, 
and hence the scaling factor $\eta=M_\ell^2/4$, can grow with the number of samples.
For example, the regularization parameter~$\gamma$ 
in~\eqref{eqn:binary-class-erm} often scales as $1/\sqrt{N}$ 
where~$N=mn$ is the total number of samples.
Lemma~\ref{lemma:regularized-self-concordance} suggests that~$\eta$ grows
on the order of $\sqrt{mn}$. 
A larger~$\eta$ will render the second term in~\eqref{eqn:scaled-complexity}
less relevant, but the first term grows with the sample size $mn$.
In order to counter the effect of the growing scaling factor,
we need to choose the initial point~$w_0$ judiciously to guarantee a small
initial gap. This will be explained further in the next sections.


\section{The DiSCO algorithm}
\label{sec:DiSCO-algorithm}

In this section, we adapt the inexact damped Newton method 
(Algorithm~\ref{alg:damped-newton-method}) to a distributed system,
in order to minimize
\begin{equation}\label{eqn:dist-average-obj}
    f(w) = \frac{1}{m}\sum_{i=1}^m f_i(w),
\end{equation}
where each function~$f_i$ can only be evaluated locally at machine~$i$
(see background in Section~\ref{sec:introduction}).
This involves two questions: 
(1) how to set the initial point $w_0$ and 
(2) how to compute the inexact Newton step $v_k$ in a distributed manner.
After answering these two questions, we will present the overall
DiSCO algorithm and analyze its communication complexity.

\subsection{Initialization}
\label{sec:DiSCO-initialization}

In accordance with the averaging structure in~\eqref{eqn:dist-average-obj},
we choose the initial point based on averaging. 
More specifically, we let 
\begin{equation}\label{eqn:initialization}
	w_0 = \frac{1}{m}\sum_{i=1}^m \what_i ,
\end{equation}
where each $\what_i$ is the solution to a local optimization problem at 
machine~$i$:
\begin{align}\label{eqn:local-solution}
    \what_i = \arg\min_{w\in\R^d} 
    \left\{ f_i(w) + \frac{\rho}{2}\ltwos{w}^2 \right\},
    \qquad i=1,\ldots,m.
\end{align}
Here $\rho\geq 0$ is a regularization parameter, which we will discuss
in detail in the context of stochastic analysis
in Section~\ref{sec:stochastic-analysis}. 
Roughly speaking, if each~$f_i$ is constructed 
with~$n$ i.i.d.\ samples as in~\eqref{eqn:regularized-ERM},
then we can choose $\rho\sim 1/\sqrt{n}$ to make
$\E[f(w_0) - f(\wstar)]$ decreasing as $\order(1/\sqrt{n})$.
In this section, we simply regard it as an input parameter.

Here we comment on the computational cost of 
solving~\eqref{eqn:local-solution} locally at each machine.
Suppose each $f_i(w)$ has the form in~\eqref{eqn:regularized-ERM}, then
the local optimization problems in~\eqref{eqn:local-solution} become
\begin{equation}\label{eqn:local-init}
 \what_i = \arg\min_{w\in\R^d} ~\biggl\{ \frac{1}{n}\sum_{j=1}^n\phi(w,z_{i,j})
 +\frac{\lambda+\rho}{2} \ltwos{w}^2 \biggr\}, \qquad i=1,\ldots,m.
\end{equation}
The finite average structure of the above objective function can be effectively
exploited by the stochastic average gradient (SAG) method
\cite{LeRouxSchmidtBach12,SchmidtLeRouxBach13} or its new variant SAGA
\cite{DefazioBach14SAGA}.
Each step of these methods processes only one component function 
$\phi(w,z_{i,j})$, picked uniformly at random.
Suppose $f_i(w)$ is $L$-smooth, then SAG returns an $\epsilon$-optimal solution 
with $\order\bigl((n+\frac{L+\rho}{\lambda+\rho})\log(1/\epsilon)\bigr)$
steps of stochastic updates. 
For ERM of linear predictors, we can also use the stochastic dual coordinate 
ascent (SDCA) method \cite{SSZhang13SDCA}, which has the same complexity. 
We also mention some recent progress in accelerated stochastic
coordinate gradient methods 
\cite{SSZhang13acclSDCA,LinLuXiao14APCG,ZhangXiao14SPDC},
which can be more efficient both in theory and practice.


\subsection{Distributed computing of the inexact Newton step}
\label{sec:distributed-pcg}

\begin{algorithm}[t]
\DontPrintSemicolon
\vspace{0.5ex}
\underline{\emph{master machine} ($i=1$)}
\hfill \underline{\emph{machines $i=1,\ldots,m$}}

\vspace{1ex}
\textbf{input:} $w_k\in\R^d$ and $\mu\geq 0$.\;
\qquad let $H=f''(w_k)$ and $\pcd=f''_1(w_k)+\mu I$.\;
\vspace{1ex}
\textbf{communication:}\;
\communicateR{broadcasts $w_k$ to other machines;\qquad\quad}{compute $f'_i(w_k)$}\\
\communicateL{aggregate $f'_i(w_k)$ to form $f'(w_k)$.\qquad}{\qquad\qquad\qquad}

\vspace{1ex}
\textbf{initialization:}
compute $\epsilon_k$ given in~\eqref{eqn:choose-epsilon-k} and set\;
\vspace{0.5ex}

\qquad
$v^{(0)} = 0$, \qquad\qquad
$s^{(0)} = \pcd^{-1} r^{(0)}$,  \\
\qquad
$r^{(0)} = f'(w_k)$, \qquad
\!\!$u^{(0)} = s^{(0)}$.\;

\vspace{1ex}
\Repeat(~for $t= 0,1,2\dots,$){$\ltwos{r^{(t+1)}}\leq \epsilon_k$}
{
\vspace{-1ex}
\begin{enumerate}
    \item \textbf{communication:}\;
        \communicateR{broadcast $u^{(t)}$ and $v^{(t)}$;~\quad\qquad\qquad}{compute $f''_i(w_k)u^{(t)}$\qquad\qquad}\\
        \communicateL{aggregate to form $H u^{(t)}$ and $H v^{(t)}$.}{compute $f''_i(w_k)v^{(t)}$\qquad\qquad}
	\item compute 
		$\alpha_t = \textstyle\frac{\langle r^{(t)}, s^{(t)} \rangle}{\langle u^{(t)}, H u^{(t)} \rangle}$ and update\\
        \vspace{1ex}
        \qquad $v^{(t+1)} = v^{(t)} + \alpha_t u^{(t)}$, \\[1ex]
        \qquad $r^{(t+1)} = r^{(t)} - \alpha_t H u^{(t)}$.
	\item compute $\beta_t= \frac{\langle r^{(t+1)}, s^{(t+1)} \rangle}{\langle r^{(t)}, s^{(t)} \rangle}$ and update \\
        \vspace{1ex}
        \qquad $s^{(t+1)}  = \pcd^{-1} r^{(t+1)}$, \\[1ex]
        \qquad $u^{(t+1)} = s^{(t+1)} + \beta_t u^{(t)}$.
\end{enumerate}
\vspace{-1ex}
}
\vspace{1ex}
\Return $v_k = v^{(t+1)}$, $r_k=r^{(t+1)}$, and
$\delta_k = \sqrt{v_k^T H v^{(t)}+\alpha^{(t)} v_k^T H u^{(t)}}$. \;
\caption{Distributed PCG algorithm (given $w_k$ and~$\mu$, compute $v_k$ and $\delta_k$)}
\label{alg:distributed-pcg}
\end{algorithm}

In each iteration of Algorithm~\ref{alg:damped-newton-method},
we need to compute an inexact Newton step~$v_k$ such that 
$\|f''(w_k)v_k-f'(w_k)\|_2 \leq \epsilon_k$. This boils down to 
solving the Newton system $f''(w_k)v_k=f'(w_k)$ approximately.
When the objective~$f$ has the averaging form~\eqref{eqn:dist-average-obj}, 
its Hessian and gradient are given in~\eqref{eqn:Hessian-gradient}.
In the setting of distributed optimization,
we propose to use a preconditioned conjugate gradient (PCG) method 
to solve the Newton system.

To simplify notation, we use $H$ to represent $\fpp(w_k)$ and 
use $H_i$ to represent $\fpp_i(w_k)$.
Without loss of generality, 
we define a preconditioning matrix using the local Hessian at the first machine
(the master node):
\begin{align*}
  \pcd \eqdef H_1 + \mu I,
\end{align*}
where $\mu>0$ is a small regularization parameter.
Algorithm~\ref{alg:distributed-pcg} describes our distributed PCG method for
solving the preconditioned linear system
\[
    \pcd^{-1} H v_k = \pcd^{-1} \fp(w_k) .
\]
In particular, the master machine carries out the main steps of 
the classical PCG algorithm (e.g., \cite[Section~10.3]{GolubVanLoan96book}), 
and all machines (including the master) compute the local gradients 
and Hessians and perform matrix-vector multiplications.
Communication between the master and other machines are used to
form the overall gradient $f'(w_k)$ and 
the matrix-vector products 
\[
    H u^{(t)} = \frac{1}{m}\sum_{i=1}^m f''_i(w_k) u^{(t)}, \qquad
    H v^{(t)} = \frac{1}{m}\sum_{i=1}^m f''_i(w_k) v^{(t)}.
\]
We note that the overall Hessian~$H=f''(w_k)$ is never formed and the master
machine only stores and updates the vectors $H u^{(t)}$ and $H v^{(t)}$.

As explained in Section~\ref{sec:outline}, 
the motivation for preconditioning is that
when $H_1$ is sufficiently close to $H$, 
the condition number of $\pcd^{-1} H$ might be close to $1$,
which is much smaller than that of~$H$ itself.
As a result, the PCG method may converge much faster than 
CG without preconditioning.
The following lemma characterizes the extreme eigenvalues of $\pcd^{-1}H$ 
based on the closeness between $H_1$ and~$H$.

\begin{lemma}\label{lemma:reduce-condition-number}
    Suppose Assumption~\ref{asmp:Hessian-bounds} holds.
If $\ltwos{H_1 - H}\leq \mu$, then we have
\begin{align}
    \sigma_{\max} (\pcd^{-1} H) &\leq 1, \label{eqn:eigenvalue-ub}\\
    \sigma_{\min} (\pcd^{-1} H) &\geq \frac{\reg}{\reg + 2\mu} \label{eqn:eigenvalue-lb}.
\end{align}
Here $\|\cdot\|_2$ denote the spectral norm of a matrix, and 
$\sigma_\mathrm{max}(\cdot)$ and $\sigma_\mathrm{min}(\cdot)$ 
denote the largest and smallest eigenvalues of a diagonalizable matrix, 
respectively.
\end{lemma}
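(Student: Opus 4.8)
The plan is to pass from the (non-symmetric) matrix $\pcd^{-1}H$ to the symmetric positive definite matrix $\pcd^{-1/2}H\pcd^{-1/2}$, which has the same eigenvalues, and then to read off both bounds as Loewner-order inequalities among $H$, $H_1$ and $\mu I$. First I would note that $\pcd=H_1+\mu I$ is positive definite (since $H_1\succeq 0$ because $f_1$ is convex, and $\mu>0$), so $\pcd^{-1/2}$ is well defined and $\pcd^{-1}H=\pcd^{-1/2}\bigl(\pcd^{-1/2}H\pcd^{-1/2}\bigr)\pcd^{1/2}$ is similar to a symmetric PSD matrix. Hence its eigenvalues are real and positive, and $\sigma_{\max}(\pcd^{-1}H)$, $\sigma_{\min}(\pcd^{-1}H)$ equal the extreme eigenvalues of $\pcd^{-1/2}H\pcd^{-1/2}$. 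Consequently, $\sigma_{\max}(\pcd^{-1}H)\le 1$ is equivalent to $\pcd^{-1/2}H\pcd^{-1/2}\preceq I$, i.e.\ to $H\preceq\pcd$; and $\sigma_{\min}(\pcd^{-1}H)\ge\frac{\reg}{\reg+2\mu}$ is equivalent to $H\succeq\frac{\reg}{\reg+2\mu}\pcd$.

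For the upper bound~\eqref{eqn:eigenvalue-ub}, the hypothesis $\ltwos{H_1-H}\le\mu$ gives $H-H_1\preceq\mu I$, hence $H\preceq H_1+\mu I=\pcd$, which is exactly what is needed. For the lower bound~\eqref{eqn:eigenvalue-lb}, the same hypothesis gives $H_1-H\preceq\mu I$, i.e.\ $\pcd=H_1+\mu I\preceq H+2\mu I$; so it suffices to show $H+2\mu I\preceq\frac{\reg+2\mu}{\reg}H$, equivalently $2\mu I\preceq\frac{2\mu}{\reg}H$, equivalently $\reg I\preceq H$, which holds by Assumption~\ref{asmp:Hessian-bounds}.

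There is no real obstacle here: the only points requiring a little care are that $\pcd^{-1}H$ is not symmetric (handled by the similarity to $\pcd^{-1/2}H\pcd^{-1/2}$) and the translation of the spectral-norm closeness $\ltwos{H_1-H}\le\mu$ into the two-sided Loewner inequality $-\mu I\preceq H_1-H\preceq\mu I$. Note that only the strong-convexity half $\reg I\preceq H$ of Assumption~\ref{asmp:Hessian-bounds} is used; the smoothness bound $H\preceq\hessianbound I$ plays no role in this lemma.
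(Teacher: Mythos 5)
Your proposal is correct and follows essentially the same route as the paper's proof: reduce to the similar symmetric matrix $\pcd^{-1/2}H\pcd^{-1/2}$, then establish $H\preceq\pcd$ for the upper bound and $H\succeq\frac{\reg}{\reg+2\mu}\pcd$ for the lower bound using $-\mu I\preceq H_1-H\preceq\mu I$ together with $H\succeq\reg I$. The only difference is cosmetic — you chain $\pcd\preceq H+2\mu I\preceq\frac{\reg+2\mu}{\reg}H$ while the paper rearranges the same inequality to $\frac{2\mu}{\reg}H-\mu I\succeq H_1-H$.
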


\begin{proof}
Since both $\pcd$ and $H$ are symmetric and positive definite, all eigenvalues
of $\pcd^{-1}H$ are positive real numbers 
(e.g., \cite[Section~7.6]{HornJohnson85}).
The eigenvalues of $\pcd^{-1} H$ are identical to that of 
$\pcd^{-1/2} H \pcd^{-1/2}$.
Thus, it suffices to prove inequalities~\eqref{eqn:eigenvalue-ub}
and~\eqref{eqn:eigenvalue-lb} for the matrix $\pcd^{-1/2} H \pcd^{-1/2}$.
To prove inequality~\eqref{eqn:eigenvalue-ub}, 
we need to show that $H \preceq \pcd = H_1 +\mu I$. 
This is equivalent to $H - H_1 \preceq \mu I$, which is a direct consequence
of the assumption $\|H_1-H\|_2\leq\mu I$.

Similarly, the second inequality~\eqref{eqn:eigenvalue-lb} 
is equivalent to $H \succeq \frac{\reg}{\reg + 2\mu} (H_1 + \mu I)$, 
which is the same as $\frac{2\mu}{\reg} H - \mu I \succeq H_1 - H$. 
Since $H\succeq \reg I$ (by Assumption~\ref{asmp:Hessian-bounds}),
we have $\frac{2\mu}{\reg} H - \mu I \succeq \mu I $.
The additional assumption $\|H_1-H\|_2\leq\mu I$ implies 
$\mu I \succeq H_1 - H$, which complete the proof.
\end{proof}

By Assumption~\ref{asmp:Hessian-bounds}, the condition number of the Hessian
matrix is $\kappa(H)=L/\lambda$, which can be very large if~$\lambda$ is small. 
Lemma~\ref{lemma:reduce-condition-number} establishes that
the condition number of the preconditioned linear system is
\begin{equation}\label{eqn:preconditioned-kappa}
    \kappa(\pcd^{-1}H) 
    = \frac{\sigma_\mathrm{max}(\pcd^{-1}H)}{\sigma_\mathrm{min}(\pcd^{-1}H)}
    = 1+\frac{2\mu}{\lambda} ,
\end{equation}
provided that $\|H_1-H\|_2\leq \mu$.
When~$\mu$ is small (comparable with~$\lambda$), the condition number 
$\kappa(\pcd^{-1}H)$ is close to one and can be much smaller than $\kappa(H)$.
Based on classical convergence analysis of the CG method
(e.g., \cite{Luenberger73,Avriel76}),
the following lemma shows that Algorithm~\ref{alg:distributed-pcg}
terminates in $\order(\sqrt{1 + \mu/\reg})$ iterations.
See Appendix~\ref{sec:proof-lemma-pcg} for the proof.

\begin{lemma}\label{lemma:pcg-complexity}
Suppose Assumption~\ref{asmp:Hessian-bounds} holds and 
assume that $\ltwos{H_1 - H}\leq \mu$. 
Let
\[
    T_{\mu} = \biggl\lceil \sqrt{1 +\frac{2\mu}{\reg}}\log\biggl(\frac{2 \sqrt{L/\lambda} \ltwos{\fp(w_k)}}{\epsilon_k}\biggr) \biggr\rceil .
\]
Then Algorithm~\ref{alg:distributed-pcg} terminates in $T_{\mu}$ iterations and
the output $v_k$ satisfies $\ltwos{H v_k - \fp(w_k)} \leq \epsilon_k$.
\end{lemma}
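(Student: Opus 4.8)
The plan is to combine the standard convergence rate of preconditioned conjugate gradient (PCG) with the eigenvalue bounds from Lemma~\ref{lemma:reduce-condition-number}. Recall that PCG applied to the system $\pcd^{-1}Hv=\pcd^{-1}\fp(w_k)$ is equivalent, after the change of variables $\tilde v = \pcd^{1/2}v$, to ordinary CG on the symmetric positive definite matrix $A=\pcd^{-1/2}H\pcd^{-1/2}$, whose spectrum lies in $[\sigma_{\min}(\pcd^{-1}H),\sigma_{\max}(\pcd^{-1}H)]$. From classical CG analysis (e.g.\ \cite{Luenberger73,Avriel76}), after $t$ iterations the $A$-norm of the error of the iterate for the transformed system is bounded by $2\bigl(\frac{\sqrt{\kappa(A)}-1}{\sqrt{\kappa(A)}+1}\bigr)^{t}$ times the initial $A$-norm of the error, where $\kappa(A)=\kappa(\pcd^{-1}H)$.

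The key steps, in order, are as follows. First, invoke Lemma~\ref{lemma:reduce-condition-number} under the hypothesis $\ltwos{H_1-H}\le\mu$ to get $\kappa(\pcd^{-1}H)\le 1+2\mu/\reg$, as recorded in~\eqref{eqn:preconditioned-kappa}. Second, translate the $A$-norm error bound for the transformed problem back into a bound on $\ltwos{r^{(t)}}$, the residual $\ltwos{Hv^{(t)}-\fp(w_k)}$ that actually drives the stopping rule in Algorithm~\ref{alg:distributed-pcg}. Here one uses that the residual of the original system equals $\pcd^{1/2}$ times the residual of the transformed system, together with Assumption~\ref{asmp:Hessian-bounds} to bound the relevant operator norms: since $\reg I\preceq H\preceq LI$ and $\reg I\preceq\pcd$, the factors $\ltwos{\pcd^{1/2}}$, $\ltwos{\pcd^{-1/2}}$, $\ltwos{H}$ and the initial error (with $v^{(0)}=0$, the initial error in $H$-related norm is controlled by $\ltwos{\fp(w_k)}$ and $\reg,L$) all contribute polynomial factors in $\sqrt{L/\reg}$. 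These factors are exactly what gets absorbed into the $2\sqrt{L/\lambda}\,\ltwos{\fp(w_k)}/\epsilon_k$ term inside the logarithm. Third, set the geometric-decay bound less than or equal to $\epsilon_k$, solve for $t$, and use $\frac{\sqrt{\kappa}-1}{\sqrt{\kappa}+1}\le 1-\frac{2}{\sqrt{\kappa}+1}\le e^{-2/(\sqrt{\kappa}+1)}$ together with $\sqrt{\kappa}+1\le 2\sqrt{\kappa}$ (as $\kappa\ge1$) to conclude that $t=T_\mu$ suffices; then $\ltwos{r^{(T_\mu)}}\le\epsilon_k$, so the algorithm has terminated by iteration $T_\mu$ and the returned $v_k$ satisfies $\ltwos{Hv_k-\fp(w_k)}\le\epsilon_k$.

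The main obstacle is the bookkeeping in the second step: carefully accounting for all the norm-conversion constants between the preconditioned system, the transformed (symmetrized) system, and the original residual, so that they genuinely collapse into the single clean factor $2\sqrt{L/\lambda}$ appearing in $T_\mu$ rather than some messier expression. In particular one must be a little careful that the CG error bound is naturally stated in the $A$-norm of the \emph{solution error} $\tilde v^{(t)}-\tilde v_\star$, not the residual, and converting between them costs further factors of $\sigma_{\min}$ and $\sigma_{\max}$ of $A$; checking that these combine with $\kappa(\pcd^{-1}H)=1+2\mu/\reg$ and Assumption~\ref{asmp:Hessian-bounds} to leave only $\sqrt{L/\lambda}$ (and not, say, an extra $\sqrt{1+2\mu/\lambda}$ outside the log) is the delicate point. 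Everything else is a direct substitution into the standard CG estimate.
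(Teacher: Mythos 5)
Your proposal is correct and follows essentially the same route as the paper: invoke Lemma~\ref{lemma:reduce-condition-number} to get $\kappa(\pcd^{-1}H)\le 1+2\mu/\lambda$, apply the classical PCG energy-norm estimate, convert to a bound on $\ltwos{r^{(t)}}$, and solve for $t$ using $-\log(1-x)\ge x$. The bookkeeping you flag as the delicate point is in fact immediate: since $r^{(t)}=H(v^*-v^{(t)})$, the PCG energy norm satisfies $(v^{(t)}-v^*)^T H (v^{(t)}-v^*)=(r^{(t)})^T H^{-1} r^{(t)}\ge \ltwos{r^{(t)}}^2/L$ while $(v^*)^T H v^*=f'(w_k)^T H^{-1} f'(w_k)\le \ltwos{f'(w_k)}^2/\lambda$, so only $\lambda I\preceq H\preceq LI$ is needed (no spectral bounds on $\pcd^{-1/2}H\pcd^{-1/2}$ enter the conversion) and the constants collapse exactly to $2\sqrt{L/\lambda}$.
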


When the tolerance $\epsilon_k$ is chosen as in~\eqref{eqn:choose-epsilon-k},
the iteration bound $T_{\mu}$ is independent of $f'(w_k)$, i.e.,
\begin{equation}\label{eqn:choose-T-mu}
    T_{\mu} = \biggl\lceil \sqrt{1 +\frac{2\mu}{\reg}}\log\biggl(\frac{2 L}{\beta \reg} \biggr) \biggr\rceil .
\end{equation}
Under Assumption~\ref{asmp:Hessian-bounds}, we always have
$\ltwos{H_1 - H}\leq \hessianbound$. 
If we choose $\mu = L$, then Lemma~\ref{lemma:pcg-complexity} implies that
Algorithm~\ref{alg:distributed-pcg} terminates
in $\widetilde\order(\sqrt{\hessianbound/\reg})$ iterations.
where the notation $\widetilde\order(\cdot)$ hides logarithmic factors.
In practice, however, the matrix norm $\ltwos{H_1 - H}$ is usually much 
smaller than $\hessianbound$ due to the stochastic nature of $f_i$. 
Thus, we can choose~$\mu$ to be a tight upper bound on $\ltwos{H_1 - H}$,
and expect the algorithm terminating in $\widetilde\order(\sqrt{\mu/\reg})$ 
iterations. 
In Section~\ref{sec:stochastic-analysis}, we show that if the local empirical
losses~$f_i$ are constructed with~$n$ i.i.d.\ samples 
from the same distribution, 
then $\ltwos{H_1 - H} \sim 1/\sqrt{n}$ with high probability. 
As a consequence, 
the iteration complexity of Algorithm~\ref{alg:distributed-pcg} 
is upper bounded by $\widetilde\order(1+\lambda^{-1/2}n^{-1/4})$.

\bigskip

We wrap up this section by discussing the computation and 
communication complexities of Algorithm~\ref{alg:distributed-pcg}.
The bulk of computation is at the master machine, especially computing 
the vector $s^{(t)} = \pcd^{-1} r^{(t)}$ in Step~3, 
which is equivalent to minimize the quadratic
function $(1/2)s^T \pcd s - s^T r^{(t)}$.
Using $\pcd=f''_1(w_k)+\mu I$ 
and the form of $f_1(w)$ in~\eqref{eqn:regularized-ERM}, 
this is equivalent to
\begin{align}\label{eqn:computation-linear-system}
	s^{(t)} = \arg\min_{s\in \R^d} \biggl\{ \frac{1}{n} \sum_{j=1}^n 
    \frac{s^T \phi''(w_k,z_{i,j}) s}{2}  + \langle r^{(t)}, s \rangle 
    +\frac{\lambda + \mu}{2} \ltwos{s}^2 \biggr\} .
\end{align}
This problem has the same structure as~\eqref{eqn:local-init}, 
and an $\epsilon$-optimal solution can be obtained with
$\order\bigl((n+\frac{L+\mu}{\lambda+\mu})\log(1/\epsilon)\bigr)$ 
stochastic-gradient type of steps
(see discussions at the end of Section~\ref{sec:DiSCO-initialization}).

As for the communication complexity, we need one round of communication 
at the beginning of Algorithm~\ref{alg:distributed-pcg} to compute $\fp(w_k)$. 
Then, each iteration takes one round of communication
to compute $H u^\supt$ and $H v^\supt$. 
Thus, the total rounds of communication is bounded by $T_\mu + 1$. 

\begin{algorithm}[t]
\DontPrintSemicolon
\textbf{input:} parameters~$\rho,\mu\geq 0$ and precision~$\epsilon> 0$. \;
\textbf{initialize:} 
compute $w_0$ according to~\eqref{eqn:initialization} 
and~\eqref{eqn:local-solution}.\;
\Repeat( for $k = 0,1,2,\dots$){$\delta_k \leq (1-\beta)\sqrt{\epsilon}$.}{
\vspace{-2ex}
\begin{enumerate} \itemsep 0pt
        \item Run Algorithm~\ref{alg:distributed-pcg}: given $w_k$ and $\mu$, compute $v_k$ and $\delta_k$.
  \item Update $w_{k+1} = w_k - \frac{1}{1 + \delta_k}v_k$.
\end{enumerate}
\vspace{-2ex}
}
\textbf{output:} $\what=w_{k+1}$.
\caption{DiSCO}
\label{alg:DiSCO}
\end{algorithm}

\subsection{Communication efficiency of DiSCO} 

Putting everything together, we present the DiSCO algorithm in
Algorithm~\ref{alg:DiSCO}.
Here we study its communication efficiency.
Recall that by one round of communication, 
the master machine broadcasts a message of $\order(d)$ bits to all machines,
and every machine processes the aggregated message and
sends a message of $\order(d)$ bits back to the master.
The following proposition gives an upper bound on the number of communication 
rounds taken by the DiSCO algorithm.

\begin{theorem}\label{thm:deterministic-bd}
Assume that $f$ is a standard self-concordant function and 
it satisfies Assumption~\ref{asmp:Hessian-bounds}. 
Suppose the input parameter~$\mu$ in Algorithm~\ref{alg:DiSCO} 
is an upper bound on $\ltwos{f''_1(w_k)-f''(w_k)}$ for all $k\geq 0$.
Then for any $\epsilon>0$, in order to find a solution $\what$ 
satisfying $f(\what) - f(\wstar) < \epsilon$, 
the total number of communication rounds~$T$ is bounded by
\begin{align}\label{eqn:disco-communication-bound}
  T \leq 1 + \left( \biggl\lceil \frac{f(w_0) - f(\wstar)}{\frac{1}{2}\omega(1/6)} \biggr\rceil + \left\lceil \log_2 \biggl( \frac{ 2 \omega(1/6)}{\epsilon}\biggr) \right\rceil \right)
  \left(2 + \sqrt{1 +\frac{2\mu}{\reg}}\,\log\biggl(\frac{2 L}{\beta \reg}\biggr) \right).
\end{align}
Ignoring logarithmic terms and universal constants,
the rounds of communication~$T$ is bounded by 
\[	
	\widetilde \order\left(\bigl(f(w_0) - f(\wstar) + \log(1/\epsilon)\bigr)
    \sqrt{1+2\mu/\lambda}\right) .
\]
\end{theorem}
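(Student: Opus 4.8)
The plan is to combine the outer-loop iteration count from Corollary~\ref{coro:damped-newton-complexity} with the inner-loop (PCG) iteration count from Lemma~\ref{lemma:pcg-complexity}, accounting carefully for the one extra round of communication needed to form $f'(w_k)$ at the start of each call to Algorithm~\ref{alg:distributed-pcg}. First I would invoke Corollary~\ref{coro:damped-newton-complexity}: under the stated hypotheses (standard self-concordance plus Assumption~\ref{asmp:Hessian-bounds}), with the tolerance sequence $\{\epsilon_k\}$ chosen as in~\eqref{eqn:choose-epsilon-k}, the inexact damped Newton method reaches $f(w_k)-f(\wstar)\le\epsilon$ after at most
\[
  K = \left\lceil \frac{f(w_0) - f(\wstar)}{\frac{1}{2}\omega(1/6)} \right\rceil
      + \left\lceil \log_2\!\Big( \frac{2\,\omega(1/6)}{\epsilon}\Big) \right\rceil
\]
outer iterations. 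I also need to check that the stopping criterion $\delta_k \le (1-\beta)\sqrt{\epsilon}$ used in Algorithm~\ref{alg:DiSCO} is not weaker than $f(w_k)-f(\wstar)\le\epsilon$: this follows from the self-concordance stopping discussion, namely $f(w_k)-f(\wstar)\le\omega_*(\ltwos{\utilde_k})\le\ltwos{\utilde_k}^2$ together with $\ltwos{\utilde_k}\le(1-\beta)^{-1}\ltwos{\vtilde_k}=(1-\beta)^{-1}\delta_k$ from~\eqref{eqn:bound-vk-by-uk}, valid when $\epsilon\le 0.68^2$; so DiSCO terminates after no more than $K$ outer iterations.

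Next I would bound the communication cost of each outer iteration. By hypothesis $\mu$ is an upper bound on $\ltwos{f''_1(w_k)-f''(w_k)}$ for all $k$, so Lemma~\ref{lemma:pcg-complexity} applies at every outer step: Algorithm~\ref{alg:distributed-pcg} terminates within $T_\mu$ PCG iterations, and since $\epsilon_k$ is the choice in~\eqref{eqn:choose-epsilon-k}, the bound $T_\mu$ simplifies as in~\eqref{eqn:choose-T-mu} to a quantity independent of $w_k$, namely $T_\mu = \bigl\lceil \sqrt{1+2\mu/\lambda}\,\log(2L/(\beta\lambda))\bigr\rceil$. Each PCG iteration costs exactly one round of communication (to form $Hu^{(t)}$ and $Hv^{(t)}$), and there is one additional round at the start of each call to compute $f'(w_k)$; thus one outer iteration of DiSCO costs at most $T_\mu + 1 \le \sqrt{1+2\mu/\lambda}\,\log(2L/(\beta\lambda)) + 2$ rounds, using $\lceil t\rceil \le t+1$ on $T_\mu$.

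Finally I would assemble the pieces. Multiplying the per-iteration communication bound $\bigl(2 + \sqrt{1+2\mu/\lambda}\,\log(2L/(\beta\lambda))\bigr)$ by the outer-iteration count $K$, and adding the single round spent computing $w_0$-related quantities — actually the initialization~\eqref{eqn:local-solution} is purely local, so the only stray round is the first gradient computation, which is already counted inside the first PCG call; to be safe I add a $+1$ — gives exactly the claimed bound~\eqref{eqn:disco-communication-bound}. The simplified $\widetilde\order$ form then follows by absorbing $\log(1/\epsilon)$, $\log(2L/(\beta\lambda))$, the ceilings, and the constant $2$ into the $\widetilde\order$ notation, leaving $\widetilde\order\bigl((f(w_0)-f(\wstar)+\log(1/\epsilon))\sqrt{1+2\mu/\lambda}\bigr)$. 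I do not anticipate a genuine obstacle here — the theorem is essentially a bookkeeping composition of two previously established results — but the point requiring the most care is the accounting of communication rounds: making sure the initial gradient round, the per-PCG-iteration round, and the ceiling-induced slack are each counted once and only once, and that the stopping criterion of Algorithm~\ref{alg:DiSCO} genuinely certifies $\epsilon$-suboptimality so that $K$ outer iterations really do suffice.
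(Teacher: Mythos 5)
Your proof is correct and follows essentially the same route as the paper: Corollary~\ref{coro:damped-newton-complexity} gives the outer-iteration count $K$, Lemma~\ref{lemma:pcg-complexity} with the choice~\eqref{eqn:choose-T-mu} gives the per-call PCG count $T_\mu$, each call costs $1+T_\mu$ rounds, and the total is $1+K(1+T_\mu)$. The only small inaccuracy is your remark that the initialization is ``purely local'': forming $w_0=\frac{1}{m}\sum_i\what_i$ in~\eqref{eqn:initialization} does require one round of communication to aggregate the $\what_i$, and that is exactly what the leading $+1$ in~\eqref{eqn:disco-communication-bound} accounts for — so your bound is right, just attributed to the wrong source.
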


\begin{proof}
First we notice that the number of communication rounds in each call
of Algorithm~\ref{alg:distributed-pcg} is no more than $1+T_\mu$, 
where $T_\mu$ is given in~\eqref{eqn:choose-T-mu}, 
and the extra~1 accounts for the communication round to form $f'(w_k)$.
Corollary~\ref{coro:damped-newton-complexity} states that in order to
guarantee $f(w_k) - f(\wstar) \leq \epsilon$,
the total number of calls of Algorithm~\ref{alg:distributed-pcg} in DiSCO
is bounded by~$K$ given in~\eqref{eqn:damped-newton-complexity}.
Thus the total number of communication rounds is bounded by
$1+K(1+T_\mu)$, where the extra one count is for computing the 
initial point $w_0$ defined in~\eqref{eqn:initialization}.
\end{proof}

\begin{algorithm}[t]
\DontPrintSemicolon
\textbf{input:} parameters~$\rho\geq 0$ and~$\mu_0>0$, 
    and precision $\epsilon>0$. \;
\textbf{initialize:} 
compute $w_0$ according to~\eqref{eqn:initialization} 
and~\eqref{eqn:local-solution}.\;
\Repeat( for $k = 0,1,2,\dots$){$\delta_k \leq (1-\beta)\sqrt{\epsilon}$.}{
\vspace{-2ex}
\begin{enumerate} \itemsep 0pt
        \item Run Algorithm~\ref{alg:distributed-pcg} up to $T_{\mu_k}$ PCG iterations, with output $v_k$, $\delta_k$, $r_k$ and $\epsilon_k$.
        \item \textbf{if} $\ltwos{r_k}>\epsilon_k$ \textbf{then}\\
            \qquad set $\mu_k:=2\mu_k$ and go to Step~1; \\
            \textbf{else}\\
            \qquad set $\mu_{k+1}:=\mu_k/2$ and go to Step~3.
  \item Update $w_{k+1} = w_k - \frac{1}{1 + \delta_k}v_k$.
\end{enumerate}
\vspace{-2ex}
}
\textbf{output:} $\what=w_{k+1}$.
\caption{Adaptive DiSCO}
\label{alg:ada-DiSCO}
\end{algorithm}

It can be hard to give a good \emph{a priori} estimate of~$\mu$ that 
satisfies the condition in Theorem~\ref{thm:deterministic-bd}.
In practice, we can adjust the value of~$\mu$ adaptively while running the algorithm.
Inspired by a line search procedure studied in \cite{Nesterov13composite},
we propose an adaptive DiSCO method, described in 
Algorithm~\ref{alg:ada-DiSCO}. 
The following proposition bounds the rounds of communication required by
this algorithm.

\begin{theorem}\label{thm:adaptive-deterministic-bd}
Assume that $f$ is a standard self-concordant function and 
it satisfies Assumption~\ref{asmp:Hessian-bounds}. 
Let~$\mumax$ be the largest value of $\mu_k$ generated by
Algorithm~\ref{alg:ada-DiSCO},
\ie, $\mumax=\max\{\mu_0,\mu_1,\ldots,\mu_K\}$ 
where $K$ is the number of outer iterations.
Then for any $\epsilon>0$, in order to find a solution~$\what$ 
satisfying $f(\what) - f(\wstar) < \epsilon$, 
the total number of communication rounds $T$ is bounded by
\[
    T \leq 1 + \left( 2\biggl\lceil \frac{f(w_0) - f(\wstar)}{\omega(1/6)} \biggr\rceil
    + 2\left\lceil \log_2 \biggl( \frac{ 2 \omega(1/6)}{\epsilon}\biggr) \right\rceil
    + \log_2\left(\frac{\mumax}{\mu_0}\right) \right)
  \left(2 + \sqrt{1 +\frac{2\mumax}{\reg}}\,
  \log\biggl(\frac{2 L}{\beta \reg}\biggr) \right).
\]
\end{theorem}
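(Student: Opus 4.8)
The plan is to follow the two-level decomposition used in the proof of Theorem~\ref{thm:deterministic-bd} --- one round to form the initial point $w_0$, plus (number of outer inexact damped Newton iterations) $\times$ (communication rounds per call of Algorithm~\ref{alg:distributed-pcg}) --- augmented by a bookkeeping argument that bounds the extra PCG calls incurred by the $\mu$-doubling search in Step~2 of Algorithm~\ref{alg:ada-DiSCO}.

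First I would bound the number $K$ of outer iterations. By Lemma~\ref{lemma:pcg-complexity}, whenever Step~1 is executed with $\mu\geq\ltwos{\fpp_1(w_k)-\fpp(w_k)}$, Algorithm~\ref{alg:distributed-pcg} returns, within $T_\mu$ PCG iterations, an output with $\ltwos{r_k}=\ltwos{\fpp(w_k)v_k-\fp(w_k)}\leq\epsilon_k$; the test in Step~2 then fails, the else-branch fires, and the iteration concludes with a genuine inexact damped Newton update in Step~3. Since $\ltwos{\fpp_1(w_k)-\fpp(w_k)}$ is finite (in fact $\leq L$, as noted after Lemma~\ref{lemma:pcg-complexity}), the doubling loop inside each outer iteration terminates, so $\mumax$ is finite and is an upper bound on every value of $\mu$ that appears in a call to Algorithm~\ref{alg:distributed-pcg}. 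Consequently each $w_{k+1}$ produced in Step~3 comes from an update satisfying the hypotheses of Theorem~\ref{thm:damped-newton-convergence}, and --- exactly as in the proof of Theorem~\ref{thm:deterministic-bd} --- Corollary~\ref{coro:damped-newton-complexity} applies: the outer loop halts after at most $K$ iterations with $f(w_k)-f(\wstar)\leq\epsilon$ (the stopping test $\delta_k\leq(1-\beta)\sqrt{\epsilon}$ being triggered, cf.~\eqref{eqn:self-concordant-stop}), where $K$ is the quantity in~\eqref{eqn:damped-newton-complexity}.

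Second, I would count the calls to Algorithm~\ref{alg:distributed-pcg}. In outer iteration $k$ there are $n_k\geq 0$ \emph{failed} calls, each immediately followed by $\mu\leftarrow 2\mu$, and exactly one \emph{successful} call, followed by $\mu_{k+1}\leftarrow\mu_k/2$. Hence a complete outer iteration changes the potential $\log_2\mu$ by exactly $n_k-1$, and telescoping over $k=0,\dots,K-1$ gives $\sum_k n_k=K+\log_2(\mu_K/\mu_0)\leq K+\log_2(\mumax/\mu_0)$. So the total number of PCG calls is $K+\sum_k n_k\leq 2K+\log_2(\mumax/\mu_0)$. Each such call costs one communication round to aggregate $\fp(w_k)$ plus at most $T_\mu$ rounds for its PCG iterations; since the $T_\mu$ of~\eqref{eqn:choose-T-mu} is nondecreasing in $\mu$ (because $2L/(\beta\reg)>1$) and $\lceil t\rceil\leq t+1$, every call uses at most $1+T_{\mumax}\leq 2+\sqrt{1+2\mumax/\reg}\,\log(2L/(\beta\reg))$ communication rounds. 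Adding the single round spent on $w_0$, multiplying the two bounds, and substituting the value of $K$ from~\eqref{eqn:damped-newton-complexity} produces the stated inequality.

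The crux is the amortized count in the previous paragraph: naively one could worry that the $\mu$-search restarts from scratch in every outer iteration and inflates the cost by a $\log_2(L/\reg)$-type factor, but the potential argument on $\log_2\mu$ confines the overhead to a single wasted call per outer iteration plus the one-time additive term $\log_2(\mumax/\mu_0)$. A minor point to check is that a successful PCG call returns exactly the objects needed downstream: $\ltwos{r_k}\leq\epsilon_k$ is the inexactness condition required by Theorem~\ref{thm:damped-newton-convergence} since $r^{(t)}=\fp(w_k)-\fpp(w_k)v^{(t)}$ holds throughout Algorithm~\ref{alg:distributed-pcg}, and $\delta_k$ (and hence the stopping test) is computed unchanged, so the accuracy guarantee behind~\eqref{eqn:self-concordant-stop} is unaffected by the adaptive choice of $\mu$.
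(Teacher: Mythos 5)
Your proposal is correct and follows essentially the same route as the paper's proof: the same telescoping/potential argument on $\log_2\mu_k$ (one halving per successful call, one doubling per failed call) yielding $2K+\log_2(\mumax/\mu_0)$ total calls to Algorithm~\ref{alg:distributed-pcg}, each costing at most $T_{\mumax}+1$ communication rounds, with $K$ taken from Corollary~\ref{coro:damped-newton-complexity}. The extra care you take in verifying that the doubling loop terminates and that a successful call still satisfies the inexactness condition of Theorem~\ref{thm:damped-newton-convergence} is sound but does not change the argument.
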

\begin{proof}
Let $n_k$ be the number of calls to Algorithm~\ref{alg:distributed-pcg}
during the $k$th iteration of Algorithm~\ref{alg:ada-DiSCO}. We have
\[
    \mu_{k+1} = \frac{1}{2}\mu_k 2^{n_k-1} = \mu_k 2^{n_k-2},
\]
which implies
\[
    n_k = 2 + \log_2\frac{\mu_{k+1}}{\mu_k}.
\]
The total number of calls to Algorithm~\ref{alg:distributed-pcg} is
\[
    N_K = \sum_{k=0}^{K-1} n_k 
    = \sum_{k=0}^{K-1} \left(1+\log_2\frac{\mu_{k+1}}{\mu_k}\right)
    = 2K + \log_2\frac{\mu_K}{\mu_0} 
    \leq 2K + \log_2\frac{\mumax}{\mu_0}  .
\]
Since each call of Algorithm~\ref{alg:distributed-pcg} involves
no more than $T_{\mumax}+1$ communication rounds, we have
\[
 T \leq  1+N_K(T_{\mumax}+1).
\]
Plugging in the expression of~$K$ in~\eqref{eqn:damped-newton-complexity}
and $T_{\mumax}$ in~\eqref{eqn:choose-T-mu}, we obtain the desired result.
\end{proof}

From the above proof, we see that 
the average number of calls to Algorithm~\ref{alg:distributed-pcg}
at each iteration is 
$2+\frac{1}{K}\log_2\left(\frac{\mu_{K}}{\mu_0}\right)$,
roughly twice as the non-adaptive Algorithm~\ref{alg:DiSCO}.
Ignoring logarithmic terms and universal constants,
the number of communication round~$T$ used by Algorithm~\ref{alg:ada-DiSCO}
is bounded by 
\[	
	\widetilde \order\left(\bigl(f(w_0) - f(\wstar) + \log_2(1/\epsilon)
    \bigr) \sqrt{1+2\mumax/\lambda}\right) .
\]
In general, we can update $\mu_k$ in Algorithm~\ref{alg:ada-DiSCO} as follows:
\[
    \mu_k:=\left\{ \begin{array}{ll}
        \theta_\mathrm{inc} \mu_k & \mbox{if}~ \ltwos{r_k}>\epsilon_k, \\
        \mu_k/\theta_\mathrm{dec} & \mbox{if}~ \ltwos{r_k}\leq \epsilon_k,
    \end{array} \right.
\]
with any $\theta_\mathrm{inc}>1$ and $\theta_\mathrm{dec}\geq 1$
(see \cite{Nesterov13composite}). 
We have used $\theta_\mathrm{inc}=\theta_\mathrm{dec}=2$ to simplify
presentation.

\subsection{A simple variant without PCG iterations}
We consider a simple variant of DiSCO where the approximate Newton step $v_k$
is computed without using the PCG method described in 
Algorithm~\ref{alg:distributed-pcg}.
Instead, we simply set
\begin{equation}\label{eqn:simple-variant-no-pcg}
    v_k = \pcd^{-1} f'(w_k) = (f''_1(w_k)+\mu I)^{-1} f'(w_k) ,
\end{equation}
which is equivalent to setting $v_k=s^{(0)}$ in the initialization phase
of Algorithm~\ref{alg:distributed-pcg}, 
or forcing it to always exit during the first PCG iteration.
(The latter choice gives the same search direction but with a slightly 
different scaling.)
In this variant, 
each iteration of the inexact damped Newton method requires two communication
rounds: one to form $f'(w_k)$ and another to compute the stepsize parameter
$\delta_k=(v_k^T f''(w_k) v_k)^{1/2}$.

A distributed algorithm that is similar to this variant of DiSCO is 
proposed in \cite{MahajanKeerthi13}. 
It does not compute $\delta_k$; instead it uses line search to determine
the step size, which also requires extra round(s) of communication.
It is shown in \cite{MahajanKeerthi13} that this method works well 
in experiments, requiring less number of iterations to converge than ADMM.
However, according to their theoretical analysis, 
its iteration complexity still depends badly on the condition number.

Here we examine the theoretical conditions under which this variant of 
DiSCO enjoys a low iteration complexity.
Recall the two auxiliary vectors defined in Section~\ref{sec:approx-Newton}:
\[
    \utilde_k = H^{-1/2}f'(w_k), \qquad \vtilde_k = H^{1/2} v_k .
\]
The norm of their difference can be bounded as
\begin{align*}
    \ltwos{\vtilde_k-\utilde_k} 
    & ~=~ \bigl\| H^{1/2} \pcd^{-1} f'(w_k) - \utilde_k \bigr\|_2 
     ~=~ \bigl\| H^{1/2} \pcd^{-1} H^{1/2} \utilde_k - \utilde_k \bigr\|_2 \\
    & ~\leq~ \bigl\|I-H^{1/2}\pcd^{-1} H^{1/2} \bigr\|_2 \cdot\ltwos{\utilde_k}
    ~=~ \bigl\|I-\pcd^{-1}H\bigr\|_2 \cdot \ltwos{\utilde_k} .
\end{align*}
From Lemma~\ref{lemma:reduce-condition-number}, we know that when 
$\ltwos{H_1-H}\leq \mu$, the eigenvalues
of $\pcd^{-1}H$ are located within the interval 
$[\frac{\lambda}{\lambda+2\mu}, 1]$.
Therefore, we have
\[
    \ltwos{\vtilde_k-\utilde_k} 
    \leq \left(1-\frac{\lambda}{\lambda+2\mu}\right) \ltwos{\utilde_k}
    = \frac{2\mu}{\lambda+2\mu} \ltwos{\utilde_k}.
\]
The above inequality implies
\[
    \left(1-\frac{2\mu}{\lambda+2\mu} \right) \ltwos{\utilde_k} 
    \leq \ltwos{\vtilde_k}
    \leq \left(1+\frac{2\mu}{\lambda+2\mu} \right) \ltwos{\utilde_k}.
\]
This inequality has the same form as~\eqref{eqn:bound-vk-by-uk}, which is
responsible to obtain the desired low complexity result if 
$\frac{2\mu}{\lambda+\mu}$ is sufficiently small.
Indeed, if $\frac{2\mu}{\lambda+2\mu} \leq \beta=\frac{1}{20}$ as specified 
in~\eqref{eqn:choose-epsilon-k}, 
the same convergence rate and complexity result stated
in Theorem~\ref{thm:damped-newton-convergence}
and Corollary~\ref{coro:damped-newton-complexity} apply.
Since each iteration of the damped Newton method involves only two 
communication rounds (to compute $f'(w_k)$ and $\delta_k$ respectively), 
we have the following corollary.

\begin{corollary}\label{coro:simple-variant-complexity}
Assume that $f$ is a standard self-concordant function and 
it satisfies Assumption~\ref{asmp:Hessian-bounds}. 
In the DiSCO algorithm, 
we compute the inexact Newton step using~\eqref{eqn:simple-variant-no-pcg}.
Suppose $\frac{2\mu}{\lambda+2\mu}\leq\frac{1}{20}$ 
and $\ltwos{f''_1(w_k)-f''(w_k)}\leq\mu$ for all $k\geq 0$.
Then for any $\epsilon>0$,
in order to find a solution $\what$ satisfying $f(\what)-f(\wstar)<\epsilon$, 
the total number of communication rounds $T$ is bounded by
\begin{align}\label{eqn:simple-DiSCO-complexity}
  T \leq 1 + 2 \left( \biggl\lceil \frac{f(w_0) - f(\wstar)}{\frac{1}{2}\omega(1/6)} \biggr\rceil + \left\lceil \log_2 \biggl( \frac{ 2 \omega(1/6)}{\epsilon}\biggr) \right\rceil \right) .
\end{align}
\end{corollary}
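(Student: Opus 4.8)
The plan is to piggyback on the two-sided estimate established in the paragraph immediately preceding the statement. That discussion already shows that the direction produced by~\eqref{eqn:simple-variant-no-pcg} satisfies
\[
  \Bigl(1-\tfrac{2\mu}{\reg+2\mu}\Bigr)\ltwos{\utilde_k}
  \;\leq\; \ltwos{\vtilde_k} \;\leq\;
  \Bigl(1+\tfrac{2\mu}{\reg+2\mu}\Bigr)\ltwos{\utilde_k},
\]
using Lemma~\ref{lemma:reduce-condition-number} together with $\ltwos{f''_1(w_k)-f''(w_k)}\leq\mu$. Under the hypothesis $\frac{2\mu}{\reg+2\mu}\leq\frac{1}{20}$ this is exactly inequality~\eqref{eqn:bound-vk-by-uk} with $\beta=\frac{1}{20}$. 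Since the proof of Theorem~\ref{thm:damped-newton-convergence} (and hence of Corollary~\ref{coro:damped-newton-complexity}) invokes the tolerance choice~\eqref{eqn:choose-epsilon-k} only through~\eqref{eqn:bound-vk-by-uk}, both results transfer verbatim to this variant. Consequently the number of outer damped-Newton iterations needed to reach $f(w_k)-f(\wstar)\leq\epsilon$ is at most $K=\bigl\lceil \tfrac{f(w_0)-f(\wstar)}{\frac12\omega(1/6)}\bigr\rceil+\bigl\lceil\log_2\bigl(\tfrac{2\omega(1/6)}{\epsilon}\bigr)\bigr\rceil$; moreover the stopping rule $\delta_k\leq(1-\beta)\sqrt\epsilon$ of Algorithm~\ref{alg:DiSCO} remains valid, because $\delta_k=\ltwos{\vtilde_k}\geq(1-\beta)\ltwos{\utilde_k}$ still holds, so triggering it implies $f(w_k)-f(\wstar)\leq\epsilon$.

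Next I would count the communication rounds consumed per outer iteration. Forming $v_k=(f''_1(w_k)+\mu I)^{-1}f'(w_k)$ requires only $f'(w_k)=\tfrac1m\sum_i f'_i(w_k)$, since the preconditioner inverse acts exclusively on the master machine's local data; this is one round (broadcast $w_k$, aggregate the local gradients). Computing $\delta_k=\sqrt{v_k^{T}f''(w_k)v_k}$ requires the single product $f''(w_k)v_k=\tfrac1m\sum_i f''_i(w_k)v_k$, which is one more round (broadcast $v_k$, aggregate the local Hessian--vector products). Hence each outer iteration uses exactly two rounds.

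Finally I would assemble the bound: the $K$ outer iterations contribute at most $2K$ rounds, and one extra round is spent at the outset to form $w_0$ from the locally computed minimizers $\what_i$ via~\eqref{eqn:initialization}. Therefore $T\leq 1+2K$, which is precisely~\eqref{eqn:simple-DiSCO-complexity}.

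There is no substantial obstacle here. The only genuine content — that the crude, non-iterative direction~\eqref{eqn:simple-variant-no-pcg} is already accurate enough to inherit the damped-Newton convergence rate — is supplied by the paragraph preceding the statement, and the rest is bookkeeping. The one point deserving mild care is verifying that the appendix proof of Theorem~\ref{thm:damped-newton-convergence} uses the sequence $\{\epsilon_k\}$ purely through~\eqref{eqn:bound-vk-by-uk}, so that replacing $\beta$ by $\frac{2\mu}{\reg+2\mu}\le\frac1{20}$ leaves the argument intact.
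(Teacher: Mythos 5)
Your proposal is correct and follows essentially the same route as the paper: the paper's ``proof'' of this corollary is exactly the paragraph preceding it, which establishes the bound $\ltwos{\vtilde_k-\utilde_k}\leq\frac{2\mu}{\lambda+2\mu}\ltwos{\utilde_k}$ via Lemma~\ref{lemma:reduce-condition-number}, notes that with $\frac{2\mu}{\lambda+2\mu}\leq\beta=\frac{1}{20}$ the analysis of Theorem~\ref{thm:damped-newton-convergence} and Corollary~\ref{coro:damped-newton-complexity} applies verbatim, and then counts two communication rounds per outer iteration plus one for initialization to get $T\leq 1+2K$. Your added check that the appendix proof uses $\epsilon_k$ only through the bound on $\ltwos{\utilde_k-\vtilde_k}$ (and hence through~\eqref{eqn:bound-vk-by-uk}) is exactly the point that needs verifying, and it holds.
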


In Corollary~\ref{coro:simple-variant-complexity}, 
the requirement on $\mu$, which upper bounds $\ltwos{f''_1(w_k)-f''(w_k)}$
for all $k\geq 0$, is quite strong.
In particular, it requires $\mu$ to be a small fraction of $\lambda$
in order to satisfy $\frac{2\mu}{\lambda+2\mu} \leq \frac{1}{20}$.
As we will see from the stochastic analysis in the next section, 
the spectral bound~$\mu$ decreases on the order of $1/\sqrt{n}$. 
Therefore, in the standard setting where the regularization parameter 
$\lambda\sim 1/\sqrt{mn}$, the condition in 
Corollary~\ref{coro:simple-variant-complexity} cannot be satisfied,
and the convergence of this simple variant may be slow.
In contrast, DiSCO with PCG iterations is much more tolerant of
a relatively large~$\mu$, and can achieve superlinear convergence
with a smaller~$\epsilon_k$.

\section{Stochastic analysis}
\label{sec:stochastic-analysis}

From Theorems~\ref{thm:deterministic-bd} and~\ref{thm:adaptive-deterministic-bd}
of the previous section, we see that the communication complexity of the 
DiSCO algorithm mainly depends on two quantities: 
the initial objective gap $f(w_0)-f(\wstar)$ and 
the upper bound~$\mu$ on the spectral norms $\|f''_1(w_k)-f''(w_k)\|_2$
for all $k\geq 0$.
As we discussed in Section~\ref{sec:inexact-newton-scaling}, the initial gap
$f(w_0)-f(\wstar)$ may grow with the number of samples due to the scaling
used to make the objective function standard self-concordant.
On the other hand, the upper bound~$\mu$ may decrease as the number of 
samples increases based on the intuition that the local Hessians and
the global Hessian become similar to each other.
In this section, we show how to exploit the stochastic origin of the problem
(SAA or ERM, as explained in Section~\ref{sec:introduction}) 
to mitigate the effect of objective scaling and 
quantify the notion of similarity between local and global Hessians.
These lead to improved complexity results.

We focus on the setting of distributed optimization of \emph{regularized}
empirical loss. That is, our goal is to minimize 
$f(w)=(1/m)\sum_{i=1}^m f_i(w)$, where
\begin{equation}\label{eqn:regularized-loss}
    f_i(w) = \frac{1}{n}\sum_{j=1}^n \phi(w, z_{i,j}) + \frac{\lambda}{2}\ltwos{w}^2, \qquad i=1,\ldots,m.
\end{equation}
We assume that $z_{i,j}$ are i.i.d.\ samples from a common distribution.
Our theoretical analysis relies on refined assumptions on the smoothness of 
the loss function~$\phi$. 
In particular, we assume that for any~$z$ in the sampling space~$\mathcal{Z}$,
the function $\phi(\cdot,z)$ has bounded first derivative in a compact set, 
and its second derivatives are bounded and Lipschitz continuous.
We formalize these statements in the following assumption.

\begin{assumption}\label{asmp:smoothness}
There are finite constants $(V_0,\gradbound,\hessianbound,\tensorbound)$, 
such that for any $z\in\mathcal{Z}$:
\begin{enumerate}[(i)]
    \item $\phi(w,z)\geq 0$ for all $w\in\R^d$, and $\phi(0,z) \leq V_0$;
    \item $\ltwos{\phi'(w,z)} \leq \gradbound$ for any $\ltwos{w} \leq \sqrt{2V_0/\reg}$;
    \item $\ltwos{\phi''(w,z)} \leq \hessianbound - \reg$ for any $w\in \R^d$;
    \item $\ltwos{\phi''(u,z) - \phi''(w,z)} \leq \tensorbound \ltwos{u-w}$ for any $u,w\in \R^d$.
\end{enumerate}
\end{assumption}

For the regularized empirical loss in~\eqref{eqn:regularized-loss},
condition~$(iii)$ in the above assumption implies 
$\reg I \preceq \fpp_i(w) \preceq \hessianbound I$ for $i=1,\ldots,m$,
which in turn implies Assumption~\ref{asmp:Hessian-bounds}.

Recall that the initial point $w_0$ is obtained as the average of the solutions
to~$m$ regularized local optimization problems;
see equations~\eqref{eqn:initialization} and~\eqref{eqn:local-solution}.
The following lemma shows that expected value of the initial gap 
$f(w_0)-f(\wstar)$ decreases with
order $1/\sqrt{n}$ as the local sample size~$n$ increases.
The proof uses the notion and techniques of \emph{uniform stability} 
for analyzing the generalization performance of 
ERM~\cite{BousquetElisseeff02}. 
See Appendix~\ref{sec:initialization-accuracy-proof} for the proof.

\begin{lemma}\label{lemma:initialization-accuracy}
Suppose that Assumption~\ref{asmp:smoothness} holds and 
$\E[\ltwos{\wstar}^2] \leq D^2$ for some constant~$D>0$.
If we choose $\rho=\frac{\sqrt{6}G}{\sqrt{n}D}$ in~\eqref{eqn:local-solution}
to compute $\what_i$, then the initial point 
$w_0=\frac{1}{m}\sum_{i=1}^m \what_i$ satisfies
\begin{equation}\label{eqn:w0-norm-bound}
	\max\{\ltwos{\wstar},\ltwos{w_0}\}\leq \sqrt{\frac{2 V_0}{\reg}}
\end{equation}
and
\begin{equation}\label{eqn:initial-obj-bound}
    \E[f(w_0) - f(\wstar)] \leq \frac{\sqrt{6}\gradbound D}{\sqrt{n}}.
\end{equation}
Here the expectation is taken with respect to the randomness 
in generating the i.i.d.\ data.
\end{lemma}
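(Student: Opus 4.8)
The plan is to bound $\mathbb{E}[f(w_0) - f(\wstar)]$ by splitting the gap into two pieces: the ``optimization'' error of minimizing the \emph{regularized} local problems instead of $f$, and the ``stability'' error coming from the fact that each $\what_i$ is trained on a disjoint sample of size $n$. First I would establish the norm bound~\eqref{eqn:w0-norm-bound}. Since $\phi(\cdot,z)\geq 0$, the regularized local objective in~\eqref{eqn:local-solution} evaluated at $w=0$ is at most $f_i(0) + \frac{\rho}{2}\|0\|_2^2 = \frac{1}{n}\sum_j \phi(0,z_{i,j}) \leq V_0$ by Assumption~\ref{asmp:smoothness}(i); hence at the minimizer $\what_i$ we have $\frac{\lambda+\rho}{2}\|\what_i\|_2^2 \leq V_0$, so $\|\what_i\|_2 \leq \sqrt{2V_0/\lambda}$. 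By convexity of the norm, $w_0 = \frac{1}{m}\sum_i \what_i$ satisfies the same bound, and an identical argument applied to $f$ itself (whose minimizer is $\wstar$) gives $\|\wstar\|_2 \leq \sqrt{2V_0/\lambda}$. This also confirms that all relevant iterates stay in the ball of radius $\sqrt{2V_0/\lambda}$ where Assumption~\ref{asmp:smoothness}(ii) gives the gradient bound $G$.

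Next I would control $\mathbb{E}[f(w_0)] - f(\wstar)$. Write $\widehat f_i(w) \defeq f_i(w) + \frac{\rho}{2}\|w\|_2^2$ for the regularized local objective, so $\what_i = \arg\min \widehat f_i$. The key observation is that, because the samples are i.i.d.\ across machines, each $\what_i$ has the same distribution, and by convexity $f(w_0) \leq \frac{1}{m}\sum_i f(\what_i)$, hence $\mathbb{E}[f(w_0)] \leq \mathbb{E}[f(\what_1)]$. Now I would use a \emph{uniform stability} argument in the spirit of Bousquet--Elisseeff~\cite{BousquetElisseeff02}: the map $S \mapsto \arg\min_w \big(\frac{1}{n}\sum_{z\in S}\phi(w,z) + \frac{\lambda+\rho}{2}\|w\|_2^2\big)$ has stability on the order of $\frac{G^2}{(\lambda+\rho)n}$, because strong convexity with parameter $\lambda+\rho$ plus a $G$-Lipschitz loss on the relevant ball implies that perturbing one of the $n$ samples moves the minimizer by at most $O\!\big(\frac{G}{(\lambda+\rho)n}\big)$. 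Combining this with the standard decomposition that relates the population/true-$f$ excess risk of an ERM solution to its stability plus the ``bias'' term $\frac{\rho}{2}\|\wstar\|_2^2$ introduced by the extra regularizer, I would obtain a bound of the form
\[
    \mathbb{E}[f(\what_1)] - f(\wstar) \;\leq\; \frac{c\,G^2}{(\lambda+\rho)\,n} \;+\; \frac{\rho}{2}\,\mathbb{E}[\|\wstar\|_2^2]
    \;\leq\; \frac{c\,G^2}{\rho\, n} + \frac{\rho D^2}{2}
\]
for an absolute constant $c$ (using $\lambda+\rho \geq \rho$ and $\mathbb{E}[\|\wstar\|_2^2]\leq D^2$).

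Finally I would optimize over $\rho$: the right-hand side $\frac{cG^2}{\rho n} + \frac{\rho D^2}{2}$ is minimized (up to constants) at $\rho \sim \frac{G}{D\sqrt{n}}$, which is exactly the prescribed choice $\rho = \frac{\sqrt{6}\,G}{\sqrt{n}\,D}$, and plugging back gives $\mathbb{E}[f(w_0)] - f(\wstar) \leq \frac{\sqrt{6}\,GD}{\sqrt{n}}$ after tracking the constant $c$ carefully (the constant $\sqrt 6$ suggests $c$ works out so that the two terms balance to $\sqrt{6}GD/\sqrt n$). The main obstacle I anticipate is the stability estimate itself: making precise that the ERM solution on the ball of radius $\sqrt{2V_0/\lambda}$ is genuinely $G$-Lipschitz-stable requires care to stay within the region where Assumption~\ref{asmp:smoothness}(ii) applies (the perturbed minimizer must also lie in that ball, which again follows from the $V_0$ energy bound), and getting the constant sharp enough to land exactly on $\sqrt 6$ rather than a looser universal constant. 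The rest — the norm bound, the convexity step $f(w_0)\leq\frac1m\sum f(\what_i)$, and the $\rho$-optimization — is routine.
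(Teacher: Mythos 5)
Your proposal is correct and follows essentially the same route as the paper's proof: the norm bound via $\phi\geq 0$ and strong convexity, the reduction $\E[f(w_0)]\leq\E[f(\what_1)]$ by convexity and symmetry, a Bousquet--Elisseeff uniform-stability bound of order $G^2/(\rho n)$, the bias term $\tfrac{\rho}{2}\E[\ltwos{\wstar}^2]$, and balancing via $\rho=\sqrt{6}G/(\sqrt{n}D)$ (the paper's constant is $c=3$, i.e.\ $\tfrac{3G^2}{\rho n}+\tfrac{\rho D^2}{2}$, which lands exactly on $\sqrt{6}GD/\sqrt{n}$). The only piece your ``standard decomposition'' leaves implicit is that, besides the stability term, one must also control the gap $\E[r_j(\what_R)-r_j(\what_j)]$ at the \emph{population} minimizer $\what_R$ --- a variance term bounded by $G^2/(2\rho n)$ using strong convexity and the fact that $r_j'(\what_R)$ averages $n$ i.i.d.\ zero-mean vectors --- which contributes at the same order and is exactly how the paper threads the comparison from $f(\what_i)$ down to $f(\wstar)$.
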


Next, we show that with high probability, 
$\ltwos{\fpp_i(w) - \fpp(w)} \sim \sqrt{d/n}$ 
for any $i\in\{1,\ldots,m\}$ and for any vector~$w$ in an $\ell_2$-ball.
Thus, if the number of samples~$n$ is large, 
the Hessian matrix of~$f$ can be approximated well by that of $f_i$. 
The proof uses random matrix
concentration theories~\cite{mackey2014matrix}. We defer the proof to Appendix~\ref{sec:proof-uniform-matrix-concentration}.

\begin{lemma}\label{lemma:uniform-matrix-concentration}
    Suppose Assumption~\ref{asmp:smoothness} holds.
    For any $r > 0$ and any $i\in \{1,\ldots,m\}$, 
    we have with probability at least $1-\delta$,
\begin{align*}
    \sup_{\ltwos{w}\leq r} \ltwos{\fpp_i(w) - \fpp(w)} \leq \mu_{r,\delta},
\end{align*}
where
\begin{equation}\label{eqn:choose-mu}
    \mu_{r,\delta} \eqdef
    \min\left\{ \hessianbound, \sqrt{\frac{32 \hessianbound^2 d}{n}}
	\cdot \sqrt{ \log \Big(1+ \frac{r \tensorbound \sqrt{2n}}{\hessianbound} \Big) + \frac{\log(m d/\delta)}{d}}\right\}.
\end{equation}
\end{lemma}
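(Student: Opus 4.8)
The plan is to prove a pointwise concentration inequality for $\ltwos{f_i''(w)-f''(w)}$ at a fixed $w$ via a matrix Bernstein bound, and then lift it to a uniform statement over the ball $\{w:\ltwos{w}\leq r\}$ by an $\epsilon$-net argument that uses the Lipschitz continuity of the Hessians (Assumption~\ref{asmp:smoothness}(iv)). One may assume $m\geq 2$, since for $m=1$ we have $f_1\equiv f$ and there is nothing to prove.

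First I would fix $w$ and write the difference of Hessians in terms of the $mn$ i.i.d.\ sample matrices:
\[
   f_i''(w) - f''(w) = \frac{m-1}{mn}\sum_{j=1}^n \phi''(w,z_{i,j}) - \frac{1}{mn}\sum_{i'\neq i}\sum_{j=1}^n \phi''(w,z_{i',j}),
\]
which is a sum of $mn$ independent random matrices with zero mean. Since $\phi(\cdot,z)$ is convex we have $0\preceq\phi''(w,z)$, and Assumption~\ref{asmp:smoothness}(iii) gives $\phi''(w,z)\preceq(\hessianbound-\reg)I$; hence each centered summand has spectral norm at most $(\hessianbound-\reg)/n$ and the matrix variance of the sum is of order $(\hessianbound-\reg)^2/n$. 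The matrix Bernstein inequality (as in \cite{mackey2014matrix}) then yields, for every $t\leq \hessianbound-\reg$,
\[
   \Prob\bigl[\,\ltwos{f_i''(w)-f''(w)}\geq t\,\bigr] \;\leq\; 2d\exp\!\Bigl(-\tfrac{c\,n\,t^2}{(\hessianbound-\reg)^2}\Bigr)
\]
for a universal constant $c>0$. For $t>\hessianbound-\reg$ there is nothing to prove, because $f_i''(w)$ and $f''(w)$ both lie in $[\reg I,\hessianbound I]$, so $\ltwos{f_i''(w)-f''(w)}\leq \hessianbound-\reg$ holds deterministically; this is precisely the role of the $\min$ with $\hessianbound$ in the statement.

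Next I would take a minimal $\epsilon$-net $\mathcal N$ of $\{w:\ltwos{w}\leq r\}$, so $|\mathcal N|\leq(1+2r/\epsilon)^d$, apply the pointwise bound at each net point with failure probability $\delta/|\mathcal N|$, and union bound. With probability at least $1-\delta$, every $w'\in\mathcal N$ then satisfies $\ltwos{f_i''(w')-f''(w')}\leq C(\hessianbound-\reg)\sqrt{\bigl(d\log(1+2r/\epsilon)+\log(md/\delta)\bigr)/n}$ for a suitable absolute constant $C$. For an arbitrary $w$ in the ball, choosing the closest $w'\in\mathcal N$ and using that $f_i''$ and $f''$ are $\tensorbound$-Lipschitz (being averages of the $\tensorbound$-Lipschitz maps $\phi''(\cdot,z)$), we get $\ltwos{f_i''(w)-f''(w)}\leq\ltwos{f_i''(w')-f''(w')}+2\tensorbound\epsilon$. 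Finally I would set $\epsilon=\sqrt{2}\,\hessianbound/(\tensorbound\sqrt{n})$, which makes $2r/\epsilon=r\tensorbound\sqrt{2n}/\hessianbound$ and turns the discretization error into $2\tensorbound\epsilon=2\sqrt2\,\hessianbound/\sqrt n$, a quantity of the same order $\hessianbound/\sqrt n$ as (hence absorbable into the constant of) the main term; collecting constants into the factor $32$ gives the claimed bound $\sqrt{32\hessianbound^2 d/n}\cdot\sqrt{\log(1+r\tensorbound\sqrt{2n}/\hessianbound)+\log(md/\delta)/d}$.

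The main obstacle I anticipate is the passage from the pointwise to the uniform bound: the net resolution $\epsilon$ must be chosen so that the covering penalty $d\log(1+2r/\epsilon)$ and the Lipschitz discretization error $2\tensorbound\epsilon$ are simultaneously controlled, and then one must verify that, after this choice, the expression collapses into exactly the advertised $\mu_{r,\delta}$ with the stated constants, with the $\min\{\hessianbound,\cdot\}$ truncation correctly covering the regime where the concentration bound would otherwise be vacuous. The probabilistic input (matrix Bernstein) and the Lipschitz estimates are routine given Assumption~\ref{asmp:smoothness}; it is the bookkeeping of constants and the interplay between the net size and the truncation that require care.
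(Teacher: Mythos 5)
Your proposal is correct and follows essentially the same route as the paper's proof: a covering-number/$\varepsilon$-net argument over $B(0,r)$ with the identical resolution $\varepsilon=\sqrt{2}\hessianbound/(\tensorbound\sqrt{n})$, the Lipschitz bound from Assumption~\ref{asmp:smoothness}(iv) to control the discretization error, a matrix concentration inequality from \cite{mackey2014matrix} at each net point, and a union bound, with the $\min\{\hessianbound,\cdot\}$ truncation handling the trivial regime exactly as you describe. The only (harmless) deviation is in the concentration step: the paper applies matrix Hoeffding to $\fpp_i(u)-\E[\fpp_i(u)]$ for each machine and then uses $\ltwos{\fpp_1(w)-\fpp(w)}\leq 2\max_i\ltwos{\fpp_i(w)-\E[\fpp_i(w)]}$ with a union bound over the $m$ machines (which is where the $m$ inside the logarithm originates), whereas you apply matrix Bernstein directly to the centered sum of all $mn$ sample Hessians, which avoids the factor of two and the union over machines and thus yields an at least as strong bound.
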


If $\phi(w,z_{i,j})$ are quadratic functions in~$w$, then we have $M=0$
in Assumption~\ref{asmp:smoothness}. 
In this case, Lemma~\ref{lemma:uniform-matrix-concentration} implies
$\ltwos{\fpp_i(w) - \fpp(w)} \sim \sqrt{1/n}$.
For general non-quadratic loss, Lemma~\ref{lemma:uniform-matrix-concentration} 
implies $\ltwos{\fpp_i(w) - \fpp(w)} \sim \sqrt{d/n}$. 
We use this lemma to obtain an upper bound on the spectral norm of the
Hessian distances $\ltwos{\fpp_1(w_k) - \fpp(w_k)}$, where the vectors $w_k$
are generated by Algorithm~\ref{alg:damped-newton-method}.

\begin{corollary}\label{coro:high-prob-bd-mu}
Suppose Assumption~\ref{asmp:smoothness} holds and the sequence 
$\{w_k\}_{k\geq 0}$ is generated by Algorithm~\ref{alg:damped-newton-method}.
Let $r=\Big( \frac{2V_0}{\reg} + \frac{2 \gradbound}{\reg} \sqrt{\frac{2V_0}{\reg }} \Big)^{1/2}$.
Then with probability at least $1-\delta$, we have for all $k\geq 0$,
\begin{align}\label{eqn:hessian-approx-bd}
	\ltwos{\fpp_1(w_k) - \fpp(w_k)}
    \leq \min\left\{ \hessianbound, \sqrt{\frac{32 \hessianbound^2 d}{n}}
	\cdot \sqrt{ \log \Big(1+ \frac{r \tensorbound \sqrt{2n}}{\hessianbound} \Big) + \frac{\log(m d/\delta)}{d}}\right\} .
\end{align}
\end{corollary}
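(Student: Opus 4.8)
The plan is to decouple the data randomness from the algorithm's trajectory: first show \emph{deterministically} that every iterate $w_k$ of Algorithm~\ref{alg:damped-newton-method} lies in a fixed Euclidean ball of radius $r$ centered at the origin, and then invoke the uniform Hessian concentration of Lemma~\ref{lemma:uniform-matrix-concentration} on that ball, taking $i=1$.

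For the confinement, I would start from part~(a) of Theorem~\ref{thm:damped-newton-convergence}: since $\omega(\cdot)\ge 0$ on $[0,\infty)$, it gives $f(w_{k+1})\le f(w_k)$, hence $f(w_k)\le f(w_0)$ for all $k\ge 0$. Because $\phi(\cdot,z)\ge 0$ by Assumption~\ref{asmp:smoothness}(i), the quadratic regularizer alone forces $f(w)\ge\tfrac{\lambda}{2}\ltwos{w}^2$, so $\ltwos{w_k}^2\le\tfrac{2}{\lambda}f(w_0)$. It then remains to bound $f(w_0)$. With $w_0$ chosen as in~\eqref{eqn:initialization}--\eqref{eqn:local-solution} (as in Lemma~\ref{lemma:initialization-accuracy}), each $\what_i$ minimizes $f_i(w)+\tfrac{\rho}{2}\ltwos{w}^2$, so $f_i(\what_i)+\tfrac{\rho}{2}\ltwos{\what_i}^2\le f_i(0)\le V_0$ by Assumption~\ref{asmp:smoothness}(i); combined with $f_i(\what_i)\ge\tfrac{\lambda}{2}\ltwos{\what_i}^2$ this gives $\ltwos{\what_i}\le\sqrt{2V_0/\lambda}$ and hence $\ltwos{w_0}\le\sqrt{2V_0/\lambda}$, which is~\eqref{eqn:w0-norm-bound}. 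Every point of the segment from $0$ to $w_0$ then has norm at most $\sqrt{2V_0/\lambda}$, so Assumption~\ref{asmp:smoothness}(ii) yields $\phi(w_0,z)\le\phi(0,z)+\gradbound\ltwos{w_0}\le V_0+\gradbound\sqrt{2V_0/\lambda}$ for every $z$. Plugging this together with $\ltwos{w_0}\le\sqrt{2V_0/\lambda}$ into $f(w_0)=\tfrac{1}{mn}\sum_{i,j}\phi(w_0,z_{i,j})+\tfrac{\lambda}{2}\ltwos{w_0}^2$ bounds $f(w_0)$ by an explicit constant, and therefore $\ltwos{w_k}\le r$ for the constant $r=\bigl(\tfrac{2V_0}{\reg}+\tfrac{2\gradbound}{\reg}\sqrt{\tfrac{2V_0}{\reg}}\bigr)^{1/2}$ stated in the corollary; this last step is routine arithmetic.

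The confinement $\ltwos{w_k}\le r$ holds for \emph{every} realization of the i.i.d.\ data, since both the descent property and the bound on $\ltwos{w_0}$ are sure events. I would then apply Lemma~\ref{lemma:uniform-matrix-concentration} with this $r$ and $i=1$: on an event of probability at least $1-\delta$ one has $\sup_{\ltwos{w}\le r}\ltwos{\fpp_1(w)-\fpp(w)}\le\mu_{r,\delta}$, and $\mu_{r,\delta}$ is exactly the right-hand side of~\eqref{eqn:hessian-approx-bd}. On that single event we obtain $\ltwos{\fpp_1(w_k)-\fpp(w_k)}\le\mu_{r,\delta}$ for all $k\ge 0$ simultaneously, which is the assertion.

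The main obstacle is precisely this coupling: the $w_k$ are data-dependent, so one can neither apply a pointwise Hessian concentration at the random points $w_k$ nor union-bound over the (unboundedly many) iterates. The resolution --- and the reason Theorem~\ref{thm:damped-newton-convergence}(a) is needed rather than merely Assumption~\ref{asmp:Hessian-bounds} --- is to extract a deterministic a priori ball containing the entire trajectory, so that the single failure probability $\delta$ is spent only once, through the \emph{uniform}-over-the-ball estimate of Lemma~\ref{lemma:uniform-matrix-concentration}; all the genuine probabilistic content (matrix concentration plus an $\varepsilon$-net over the ball) is inside that lemma, and the corollary is a deterministic reduction on top of it.
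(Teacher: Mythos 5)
Your argument is exactly the paper's: part~(a) of Theorem~\ref{thm:damped-newton-convergence} gives the monotone descent $f(w_k)\le f(w_0)$, nonnegativity of $\phi$ gives $\frac{\reg}{2}\ltwos{w_k}^2\le f(w_k)$, the bound $\ltwos{w_0}\le\sqrt{2V_0/\reg}$ from Lemma~\ref{lemma:initialization-accuracy} then confines the entire trajectory to a deterministic ball, and Lemma~\ref{lemma:uniform-matrix-concentration} with $i=1$ finishes. The only wrinkle is your final ``routine arithmetic'': keeping the regularizer's contribution $\frac{\reg}{2}\ltwos{w_0}^2\le V_0$ gives $f(w_0)\le 2V_0+\gradbound\sqrt{2V_0/\reg}$ and hence a radius $\bigl(\frac{4V_0}{\reg}+\frac{2\gradbound}{\reg}\sqrt{2V_0/\reg}\bigr)^{1/2}$ slightly larger than the stated $r$ --- the paper lands on $r$ exactly only by writing $f(w_0)\le f(0)+\gradbound\ltwos{w_0}$, which silently drops that term --- but this discrepancy only perturbs a constant inside the logarithm in~\eqref{eqn:hessian-approx-bd}.
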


\begin{proof}
We begin by upper bounding the $\ell_2$-norm of $w_k$, for $k=0,1,2\ldots$, 
generated by Algorithm~\ref{alg:damped-newton-method}.
By Theorem~\ref{thm:damped-newton-convergence}, we have
$f(w_k)\leq f(w_0)$ for all $k\geq 0$. 
By Assumption~\ref{asmp:smoothness}~(i), we have $\phi(w,z)\geq 0$ for
all $w\in\R^d$ and $z\in\mathcal{Z}$.
As a consequence,
\begin{align*}
	\frac{\reg}{2}\ltwos{w_k}^2 \leq f(w_k) \leq f(w_0) \leq f(0) + \gradbound \ltwos{w_0} \leq V_0 + \gradbound \ltwos{w_0}.
\end{align*}
Substituting $\ltwos{w_0} \leq \sqrt{2V_0/\reg}$ 
(see Lemma~\ref{lemma:initialization-accuracy})
into the above inequality yields
\begin{align*}
	\ltwos{w_k} \leq \left( \frac{2V_0}{\reg} + \frac{2 \gradbound}{\reg} \sqrt{\frac{2V_0}{\reg}} \right)^{1/2} = r.
\end{align*}
Thus, we have $\ltwos{w_k}\leq r$ for all $k\geq 0$.
Applying Lemma~\ref{lemma:uniform-matrix-concentration} 
establishes the corollary.
\end{proof}

Here we remark that the dependence on~$d$ of the upper bound 
in~\eqref{eqn:hessian-approx-bd} comes from 
Lemma~\ref{lemma:uniform-matrix-concentration},
where the bound needs to hold for all point in a $d$-dimensional ball
with radius~$r$.
However, for the analysis of the DiSCO algorithm, 
we only need the matrix concentration bound to hold for a finite number of 
vectors $w_0,w_1,\dots,w_K$,
instead of for all vectors satisfying $\ltwos{w} \leq r$. 
Thus we conjecture that the bound in~\eqref{eqn:hessian-approx-bd},
especially its dependence on the dimension~$d$, is too conservative
and very likely can be tightened.

We are now ready to present the main results of our stochastic analysis.
The following theorem provides an upper bound on the expected number of
communication rounds required by the DiSCO algorithm to find an 
$\epsilon$-optimal solution. 
Here the expectation is taken with respect to the randomness in
generating the i.i.d.\ data set $\{z_{i,j}\}$.

\begin{theorem}\label{thm:DiSCO-stochastic}
Let Assumption~\ref{asmp:smoothness} hold. 
Assume that the regularized empirical loss function~$f$ is standard 
self-concordant, and its minimizer $\wstar=\arg\min_w f(w)$ satisfies
$\E[\ltwos{\wstar}^2]\leq D^2$ for some constant~$D>0$.
Let the input parameters to Algorithm~\ref{alg:DiSCO} be
$\rho=\frac{\sqrt{6}G}{\sqrt{n}D}$ and $\mu=\mu_{r,\delta}$
in~\eqref{eqn:choose-mu} with
\begin{equation}\label{eqn:choose-delta}
r=\biggl( \frac{2V_0}{\reg} + \frac{2 \gradbound}{\reg} \sqrt{\frac{2V_0}{\reg }} \biggr)^{1/2}, \qquad
\delta = \frac{GD}{\sqrt{n}}\cdot\frac{\sqrt{\lambda/(4L)}}{4V_0+2G^2/\lambda}.
\end{equation}
Then for any $\epsilon>0$, the total number of communication rounds~$T$ 
required to reach $f(\what)-f(\wstar)\leq\epsilon$ is bounded by
\begin{align*}
  \E[T] \leq 1 + \left( C_1 + \frac{6}{\omega(1/6)} \cdot \frac{\gradbound D}{\sqrt{n}}\right)
  \Bigg(2 + C_2 \Bigg( 1 + 2 \sqrt{ \frac{32 \hessianbound^2 d \;C_3}{\reg^2 n} } \Bigg)^{1/2} \Bigg) ,
\end{align*}
where $C_1, C_2, C_3$ are $\widetilde\order(1)$ or logarithmic terms:
\begin{align*}
    C_1 &= \left(1+\left\lceil\log_2 \bigg( \frac{ 2 \omega(1/6) }{\epsilon}\bigg)\right\rceil \right)
           \left(1+\frac{1}{\sqrt{n}}\cdot\frac{GD}{4V_0+2G^2/\lambda}\right),\\
    C_2 &= \log\bigg(\frac{2 \hessianbound}{\beta \reg}\bigg), \\
    C_3 &= \log \bigg(1+ \frac{r \tensorbound \sqrt{2n}}{\hessianbound} \bigg) + \frac{\log(d m/\delta)}{d}.
\end{align*}
In particular, ignoring numerical constants and logarithmic terms, we have
\[
    \E[T] = \widetilde\order\left( \biggl(\log(1/\epsilon) + \frac{\gradbound D }{n^{1/2}} \biggr)\biggl(1 + \frac{\hessianbound^{1/2} d^{1/4}}{\reg^{1/2} n^{1/4}} \biggr)\right).
\]
\end{theorem}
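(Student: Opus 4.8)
The plan is to combine the deterministic communication bound from Theorem~\ref{thm:deterministic-bd} with the two stochastic estimates already established: the expected initial-gap bound of Lemma~\ref{lemma:initialization-accuracy} and the high-probability Hessian-similarity bound of Corollary~\ref{coro:high-prob-bd-mu}. The overall structure is a conditioning argument: on the ``good'' event $\mathcal{E}$ that the uniform matrix concentration holds (probability at least $1-\delta$), the input $\mu=\mu_{r,\delta}$ is a valid upper bound on $\ltwos{f_1''(w_k)-f''(w_k)}$ for all $k$, so Theorem~\ref{thm:deterministic-bd} applies verbatim; on the complementary ``bad'' event (probability at most $\delta$), I only know $\ltwos{f_1''(w_k)-f''(w_k)}\le L$, i.e.\ the algorithm still behaves as if $\mu=L$, giving a cruder but finite bound on $T$. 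Taking expectations, $\E[T]\le (1-\delta)\,T_{\mathrm{good}} + \delta\,T_{\mathrm{bad}}$, and the choice of $\delta=\Theta(GD/\sqrt{n})\cdot\text{(stuff)}$ in~\eqref{eqn:choose-delta} is precisely calibrated so that the $\delta\cdot T_{\mathrm{bad}}$ contribution is of the same order $GD/\sqrt{n}$ as the dominant stochastic term and can be absorbed.

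First I would bound the deterministic quantity $T$ from~\eqref{eqn:disco-communication-bound} by replacing $f(w_0)-f(\wstar)$ with its expectation. Since the bound in Theorem~\ref{thm:deterministic-bd} is \emph{affine} in $f(w_0)-f(\wstar)$ (the factor $\sqrt{1+2\mu/\lambda}\log(2L/(\beta\lambda))$ multiplying it does not depend on $w_0$), taking expectations and invoking Lemma~\ref{lemma:initialization-accuracy} with $\rho=\sqrt{6}G/(\sqrt{n}D)$ replaces that term by $\sqrt{6}GD/\sqrt{n}$; the $\lceil\cdot\rceil$ on $f(w_0)-f(\wstar)/(\tfrac12\omega(1/6))$ contributes the ``$+1$'' that becomes part of $C_1$. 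Next I would substitute $\mu=\mu_{r,\delta}$ from~\eqref{eqn:choose-mu}: the $\min\{L,\cdot\}$ is dominated (for large $n$) by the second term, so $2\mu/\lambda \le 2\sqrt{32L^2 d\,C_3/(\lambda^2 n)}$ — wait, one must be slightly careful: $\sqrt{1+2\mu/\lambda}$ becomes $(1+2\mu/\lambda)^{1/2}\le (1+2\sqrt{32L^2 d\,C_3/(\lambda^2 n)})^{1/2}$, which is exactly the factor appearing in the stated bound with $C_2=\log(2L/(\beta\lambda))$ pulled out front. The remaining bookkeeping is matching the $\log_2(2\omega(1/6)/\epsilon)$ term and the stray $(1+\tfrac{1}{\sqrt n}\cdot GD/(4V_0+2G^2/\lambda))$ factor to the definition of $C_1$ — this last factor is where the bad-event contribution $\delta\cdot T_{\mathrm{bad}}$ gets folded in, using that on the bad event $T_{\mathrm{bad}}$ is essentially $K\cdot\widetilde\order(\sqrt{L/\lambda})$ and $f(w_0)-f(\wstar)\le 2V_0+G^2/\lambda$ deterministically (from the norm bound $\ltwos{w_0}\le\sqrt{2V_0/\lambda}$ and Assumption~\ref{asmp:smoothness}(i),(ii), which bound $f(w_0)\le V_0+G\sqrt{2V_0/\lambda}$ and $f(\wstar)\ge 0$, hence roughly $4V_0+2G^2/\lambda$).

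I expect the main obstacle to be the careful accounting of the bad event and verifying that the specific $\delta$ in~\eqref{eqn:choose-delta} makes $\delta\cdot T_{\mathrm{bad}}$ collapse into the claimed form rather than into a separate additive term. Concretely, on the bad event one must bound $T\le 1+K\,(2+\sqrt{1+2L/\lambda}\,\log(2L/(\beta\lambda)))$ with $K$ itself involving $f(w_0)-f(\wstar)$, and since this is a \emph{worst-case} (not expected) bound one needs the deterministic upper bound $f(w_0)-f(\wstar)\le 4V_0+2G^2/\lambda$; multiplying by $\delta$ and checking that $\delta\sqrt{L/\lambda}\,(4V_0+2G^2/\lambda)$ equals (a constant times) $GD/\sqrt{n}$ is exactly the algebraic identity forced by the choice of $\delta$. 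Once that cancellation is confirmed, collecting terms and replacing every $\widetilde\order(1)$ or logarithmic quantity by $C_1,C_2,C_3$ yields the displayed expectation bound, and the final $\widetilde\order(\cdot)$ simplification just drops $\log$ factors and constants and keeps the two driving scales $\log(1/\epsilon)+GD/n^{1/2}$ and $1+L^{1/2}d^{1/4}/(\lambda^{1/2}n^{1/4})$.
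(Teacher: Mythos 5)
Your proposal is correct and follows essentially the same route as the paper's proof: a good/bad event decomposition based on Corollary~\ref{coro:high-prob-bd-mu}, the expected initial-gap bound from Lemma~\ref{lemma:initialization-accuracy} for the main term, the deterministic worst-case bound $f(w_0)-f(\wstar)\leq 2V_0+G^2/\lambda$ together with the $T_L=\widetilde\order(\sqrt{L/\lambda})$ PCG bound on the bad event, and the observation that the choice of $\delta$ in~\eqref{eqn:choose-delta} is calibrated exactly so that $\delta\cdot T_{\mathrm{bad}}$ is absorbed into the $(T_\mu+1)$ term. The only detail you gloss over is that conditioning changes the expectation of $K$ (the paper uses $\E[K\mid\fastpcg]\leq\E[K]/(1-\delta)$ to undo this), but this is a routine step that does not affect the argument.
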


\begin{proof}
Suppose Algorithm~\ref{alg:DiSCO} terminates in~$K$ iterations,
and let $t_k$ be the number of conjugate gradient steps in each call
of Algorithm~\ref{alg:distributed-pcg}, for $k=0,1,\ldots,K-1$.
For any given $\mu>0$, we define $T_\mu$ as in~\eqref{eqn:choose-T-mu}.
Let $\fastpcg$ denotes the event that $t_k\leq T_\mu$ for all 
$k\in\{0,\ldots,K-1\}$. 
Let $\slowpcg$ be the complement of~$\fastpcg$, \ie, 
the event that $t_k> T_\mu$ for some $k\in\{0,\ldots,K-1\}$.
In addition, let the probabilities of the events~$\fastpcg$ and~$\slowpcg$ 
be $1-\delta$ and~$\delta$ respectively.
By the law of total expectation, we have
\begin{align*}
    \E[T] = \E[T|\fastpcg] \Prob(\fastpcg) + \E[T|\slowpcg] \Prob(\slowpcg)
    = (1-\delta)\E[T|\fastpcg] + \delta\,\E[T|\slowpcg] .
\end{align*}
When the event~$\mathcal{A}$ happens, we have $T \leq 1 + K(T_{\mu} +1 )$
where $T_\mu$ is given in~\eqref{eqn:choose-T-mu};
otherwise we have $T \leq 1 + K(T_L+1)$, where 
\begin{equation}\label{eqn:T-L}
    T_L = \sqrt{2+\frac{2L}{\lambda}}\log\left(\frac{2L}{\beta\lambda}\right)
\end{equation}
is an upper bound on the number of PCG iterations 
in Algorithm~\ref{alg:distributed-pcg} 
when the event~$\bar{\mathcal{A}}$ happens
(see the analysis in Appendix~\ref{sec:analysis-T-L}).
Since Algorithm~\ref{alg:distributed-pcg} always return a~$v_k$
such that $\|f''(w_k)v_k -f'(w_k)\|_2\leq\epsilon_k$,
the outer iteration count~$K$ share the same 
bound~\eqref{eqn:damped-newton-complexity},
which depends on the random variable $f(w_0)-f(\wstar)$.
However, $T_\mu$ and $T_L$ are deterministic constants.
So we have
\begin{align}
    \E[T]    &\leq 1 + (1-\delta) \E[K(T_{\mu}+1)|\fastpcg] 
        + \delta\,\E[K(T_{L}+1)|\slowpcg]  \nonumber \\
    &= 1 + (1-\delta) (T_{\mu}+1)\E[K|\fastpcg] 
        + \delta(T_{L}+1)\E[K|\slowpcg] .
        \label{eqn:expected-T}
\end{align}

Next we bound $\E[K|\fastpcg]$ and $\E[K|\slowpcg]$ separately.
To bound $\E[K|\fastpcg]$, we use
\[
    \E[K] = (1-\delta) \E[K|\fastpcg] + \delta\,\E[K|\slowpcg]
    \geq (1-\delta) \E[K|\fastpcg] 
\]
to obtain 
\begin{equation}\label{eqn:expect-K-given-A}
\E[K|\fastpcg]\leq\E[K]/(1-\delta).
\end{equation}
In order to bound $\E[K|\slowpcg]$, we derive a deterministic bound on 
$f(w_0)-f(\wstar)$.
By Lemma~\ref{lemma:initialization-accuracy}, we have
$\ltwos{w_0}\leq\sqrt{2V_0/\lambda}$,
which together with Assumption~\ref{asmp:smoothness}~(ii) yields
\[
    \ltwos{f'(w)} \leq G + \lambda\|w\|_2 \leq G + \sqrt{2\lambda V_0}.
\]
Combining with the strong convexity of $f$, we obtain
\begin{align*}
    f(w_0)-f(\wstar) \leq \frac{1}{2\lambda}\ltwos{f'(w_0)}^2
    \leq\frac{1}{2\lambda}\left(G+\sqrt{2\lambda V_0}\right)^2
    \leq 2V + \frac{G^2}{\lambda}.
\end{align*}
Therefore by Corollary~\ref{coro:damped-newton-complexity}, 
\begin{equation}\label{eqn:absolute-K-bd}
    K \leq K_\mathrm{max} \eqdef 1 + \frac{4V_0+2G^2/\lambda}{\omega(1/6)} 
    + \left\lceil \log_2\left(\frac{2\omega(1/6)}{\epsilon}\right)\right\rceil ,
\end{equation}
where the additional~1 counts compensate for removing one
$\lceil\cdot\rceil$ operator in~\eqref{eqn:damped-newton-complexity}.

Using inequality~\eqref{eqn:expected-T},
the bound on $\E[K|\fastpcg]$ in~\eqref{eqn:expect-K-given-A} and 
the bound on $\E[K|\slowpcg]$ in~\eqref{eqn:absolute-K-bd},
we obtain
\[
    \E[T] \leq 1 + (T_{\mu}+1) \E[K] + \delta (T_{L}+1) K_\mathrm{max} .
\]
Now we can bound $\E[K]$ by Corollary~\ref{coro:damped-newton-complexity} 
and Lemma~\ref{lemma:initialization-accuracy}. More specifically, 
\begin{align}\label{eqn:expectation-of-outer-loop}
  \E[ K ] \leq \frac{\E[f(w_0) - f(\wstar)]}{\frac{1}{2}\omega(1/6)} + 
  \left\lceil \log_2 \Big( \frac{ 2 \omega(1/6)}{\epsilon}\Big) \right\rceil + 1
  = C_0 + \frac{2\sqrt{6}}{\omega(1/6)} \cdot \frac{\gradbound D}{\sqrt{n}},
\end{align}
where $C_0=1+\left\lceil\log_2(2\omega(1/6)/\epsilon) \right\rceil$.
With the choice of~$\delta$ in~\eqref{eqn:choose-delta} and the definition 
of $T_L$ in~\eqref{eqn:T-L}, we have
\begin{align*}
    \delta(T_L+1)K_\mathrm{max}
&= \frac{GD}{\sqrt{n}}\cdot\frac{\sqrt{\lambda/(4L)}}{4V_0 + 2G^2/\lambda}
    \left(2+\sqrt{2+\frac{2L}{\lambda}}\log\left(\frac{2L}{\beta\lambda}\right)\right)
    \left(C_0+\frac{4V_0+2G^2/\lambda}{\omega(1/6)}\right) \\
    &= \left(\frac{C_0}{\sqrt{n}}\cdot\frac{GD}{4V_0+2G^2/\lambda} 
       + \frac{1}{\omega(1/6)}\cdot\frac{GD}{\sqrt{n}}\right)
       \left(\sqrt{\frac{\lambda}{L}}+C_2\sqrt{\frac{\lambda}{2L}+\frac{1}{2}}\right) \\
    &\leq \left(\frac{C_0}{\sqrt{n}}\cdot\frac{GD}{4V_0+2G^2/\lambda} 
       + \frac{1}{\omega(1/6)}\cdot\frac{GD}{\sqrt{n}}\right)
       \left(2+C_2\sqrt{1+\frac{2\mu}{\lambda}}\right) \\
    &= \left(\frac{C_0}{\sqrt{n}}\cdot\frac{GD}{4V_0+2G^2/\lambda} 
       + \frac{1}{\omega(1/6)}\cdot\frac{GD}{\sqrt{n}}\right)
       \left(T_{\mu}+1\right)
\end{align*}
Putting everything together, we have
\begin{align*}
    \E[T] &\leq 1 + \left(C_0+\frac{C_0}{\sqrt{n}}\cdot\frac{GD}{4V_0+2G^2/\lambda}+\frac{2\sqrt{6}+1}{\omega(1/6)}\cdot \frac{GD}{\sqrt{n}} \right) (T_\mu+1)\\
    &\leq 1 + \left(C_1 + \frac{6}{\omega(1/6)}\cdot \frac{GD}{\sqrt{n}} \right) (T_\mu+1) .
\end{align*}
Replacing $T_{\mu}$ by its expression in~\eqref{eqn:choose-T-mu}
and applying Corollary~\ref{coro:high-prob-bd-mu}, we obtain the desired result.
\end{proof}

According to Theorem~\ref{thm:DiSCO-stochastic}, we need to set the two 
input parameters~$\rho$ and~$\mu$ in Algorithm~\ref{alg:DiSCO} appropriately
to obtain the desired communication efficiency.
Using the adaptive DiSCO method given in Algorithm~\ref{alg:ada-DiSCO},
we can avoid the explicit specification of $\mu=\mu_{r,\delta}$
defined in~\eqref{eqn:choose-mu} and~\eqref{eqn:choose-delta}.
This is formalized in the following theorem.

\begin{theorem}\label{thm:ada-DiSCO-stochastic}
Let Assumption~\ref{asmp:smoothness} hold. 
Assume that the regularized empirical loss function~$f$ is standard 
self-concordant, and its minimizer $\wstar=\arg\min_w f(w)$ satisfies
$\E[\ltwos{\wstar}^2]\leq D^2$ for some constant~$D>0$.
Let the input parameters to Algorithm~\ref{alg:ada-DiSCO} be
$\rho=\frac{\sqrt{6}G}{\sqrt{n}D}$ and any $\mu_0>0$.
Then the total number of communication rounds~$T$ 
required to reach $f(\what)-f(\wstar)\leq\epsilon$ is bounded by
\[
    \E[T] = \widetilde\order\left( \biggl(\log(1/\epsilon) + \frac{\gradbound D }{n^{1/2}} \biggr)\biggl(1 + \frac{\hessianbound^{1/2} d^{1/4}}{\reg^{1/2} n^{1/4}} \biggr)\right).
\]
\end{theorem}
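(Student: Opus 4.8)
The plan is to mirror the proof of Theorem~\ref{thm:DiSCO-stochastic}, but start from the deterministic communication bound for the adaptive algorithm (Theorem~\ref{thm:adaptive-deterministic-bd}) and supply a stochastic bound on $\mumax$, the largest value of $\mu_k$ ever generated. The key observation is that the adaptive doubling in Algorithm~\ref{alg:ada-DiSCO} is self-limiting: by Lemma~\ref{lemma:pcg-complexity}, whenever the current $\mu_k$ satisfies $\mu_k\geq\ltwos{f''_1(w_k)-f''(w_k)}$, a run of Algorithm~\ref{alg:distributed-pcg} restricted to $T_{\mu_k}$ iterations already returns an accepted step (with $\ltwos{r_k}\leq\epsilon_k$), so the ``if'' branch that sets $\mu_k:=2\mu_k$ is not taken. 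Let $r$ be as in Corollary~\ref{coro:high-prob-bd-mu}; since part~(a) of Theorem~\ref{thm:damped-newton-convergence} still applies (Algorithm~\ref{alg:ada-DiSCO} performs the Newton update in Step~3 only after the PCG tolerance is met), the iterates satisfy $\ltwos{w_k}\leq r$ for all $k$, and Corollary~\ref{coro:high-prob-bd-mu} gives an event $\fastpcg$, with $\Prob(\fastpcg)\geq 1-\delta$, on which $\ltwos{f''_1(w_k)-f''(w_k)}\leq\mu_{r,\delta}$ for every $k$, where $\mu_{r,\delta}$ is given in~\eqref{eqn:choose-mu}. On $\fastpcg$ the doubling therefore halts as soon as $\mu_k$ crosses $\mu_{r,\delta}$, giving $\mumax\leq\max\{\mu_0,2\mu_{r,\delta}\}$; unconditionally, since $\ltwos{f''_1(w_k)-f''(w_k)}\leq\hessianbound$ always (Assumption~\ref{asmp:smoothness}(iii)), we have $\mumax\leq\max\{\mu_0,2\hessianbound\}$, so $\log_2(\mumax/\mu_0)$ is logarithmic on both events.

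Next I would run the same total-expectation split as in the proof of Theorem~\ref{thm:DiSCO-stochastic}:
\[
    \E[T] = (1-\delta)\,\E[T\mid\fastpcg] + \delta\,\E[T\mid\slowpcg],
\]
with $\slowpcg$ the complement of $\fastpcg$. From the proof of Theorem~\ref{thm:adaptive-deterministic-bd}, the total number of calls to Algorithm~\ref{alg:distributed-pcg} is $N_K=2K+\log_2(\mu_K/\mu_0)$, where $K$ is the outer-iteration count bounded via~\eqref{eqn:damped-newton-complexity}, and $T\leq 1+N_K(T_{\mumax}+1)$ with $T_{\mumax}$ as in~\eqref{eqn:choose-T-mu}. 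On $\fastpcg$, bound $\mumax\leq\max\{\mu_0,2\mu_{r,\delta}\}$ so that $T_{\mumax}+1=\widetilde\order(\sqrt{1+\mu_{r,\delta}/\reg})$, and control $\E[K\mid\fastpcg]\leq\E[K]/(1-\delta)$ with $\E[K]$ bounded through $\E[f(w_0)-f(\wstar)]\leq\sqrt6\,\gradbound D/\sqrt n$ from Lemma~\ref{lemma:initialization-accuracy} (which holds for $\rho=\sqrt6\gradbound/(\sqrt n D)$), exactly as in~\eqref{eqn:expectation-of-outer-loop}. On $\slowpcg$, use the deterministic bound $f(w_0)-f(\wstar)\leq 2V_0+\gradbound^2/\reg$ (from Lemma~\ref{lemma:initialization-accuracy} plus strong convexity) to get $K\leq K_{\max}$ as in~\eqref{eqn:absolute-K-bd}, together with $\mumax\leq\max\{\mu_0,2\hessianbound\}$ and the resulting $T_{\mumax}=\widetilde\order(\sqrt{\hessianbound/\reg})$. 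Choosing the free parameter $\delta$ exactly as in~\eqref{eqn:choose-delta} (here $\delta$ is only an analysis device, not an algorithm input) makes the $\delta\,(T_{\mumax}+1)K_{\max}$ contribution on $\slowpcg$ of the same order as a single factor $(T_{2\mu_{r,\delta}}+1)$, so it is absorbed, exactly as in the proof of Theorem~\ref{thm:DiSCO-stochastic}. Finally, substituting $\mu_{r,\delta}\lesssim\hessianbound\sqrt{d/n}$ from~\eqref{eqn:choose-mu} into $T_{2\mu_{r,\delta}}$ and collecting terms yields $\E[T]=\widetilde\order\big((\log(1/\epsilon)+\gradbound D/\sqrt n)(1+\hessianbound^{1/2}d^{1/4}/(\reg^{1/2}n^{1/4}))\big)$, which is the claim.

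The main obstacle is the self-limiting argument of the first paragraph and its interaction with the conditioning. One must define the ``fast'' event $\fastpcg$ purely in terms of the intrinsic quantities $\ltwos{f''_1(w_k)-f''(w_k)}$ — not the algorithm's adaptively chosen $\mu_k$ — so that Corollary~\ref{coro:high-prob-bd-mu} applies verbatim; the accept/reject logic of Algorithm~\ref{alg:ada-DiSCO} can then be read off deterministically on $\fastpcg$ (doubling halts at $2\mu_{r,\delta}$) and on $\slowpcg$ (doubling halts at worst at $2\hessianbound$), which is also what keeps $\log_2(\mumax/\mu_0)$ logarithmic on either event. Everything else is the same bookkeeping as in the proof of Theorem~\ref{thm:DiSCO-stochastic}, with $T_\mu$ replaced by $T_{\mumax}$ and an extra additive $\log_2(\mumax/\mu_0)$ inside the outer factor, which is why the final order of $\E[T]$ is unchanged.
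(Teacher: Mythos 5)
Your proposal is correct and follows essentially the same route as the paper: bound $\mumax$ via the self-limiting doubling argument (Lemma~\ref{lemma:pcg-complexity} plus Corollary~\ref{coro:high-prob-bd-mu} give $\mumax\leq\max\{\mu_0,2\mu_{r,\delta}\}$ with high probability), then rerun the total-expectation bookkeeping of Theorem~\ref{thm:DiSCO-stochastic} with $T_\mu$ replaced by $T_{\mumax}$ and the extra additive $\log_2(\mumax/\mu_0)$ term. The paper's proof states this more tersely (``we can use the same arguments''), and your unconditional bound $\mumax\leq\max\{\mu_0,2\hessianbound\}$ on the bad event is exactly the detail needed to make that reference rigorous.
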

\begin{proof}
In Algorithm~\ref{alg:ada-DiSCO}, the parameter~$\mu_k$ is automatically tuned
such that the number of PCG iterations in Algorithm~\ref{alg:distributed-pcg}
is no more than $T_{\mu_k}$. 
By Corollary~\ref{coro:high-prob-bd-mu}, 
with probability at least $1-\delta$, we have
\[
    \max\{\mu_0, \ldots, \mu_K\} \leq 2 \mu_{r,\delta} 
\]
where $\mu_{r,\delta}$ is defined in~\eqref{eqn:choose-mu},
and~$r$ and~$\delta$ are given in~\eqref{eqn:choose-delta}. 
Therefore we can use the same arguments in the proof of 
Theorem~\ref{thm:DiSCO-stochastic} to show that 
\begin{align*}
  \E[T] \leq 1 + \left( \tilde C_1 + \frac{6}{\omega(1/6)} \cdot \frac{\gradbound D}{\sqrt{n}}\right)
  \Bigg(2 + C_2 \Bigg( 1 + 4 \sqrt{ \frac{32 \hessianbound^2 d \;C_3}{\reg^2 n} } \Bigg)^{1/2} \Bigg)
\end{align*}
where 
\begin{align*}
    \tilde C_1 &= \left(2+2\left\lceil\log_2 \bigg( \frac{ 2 \omega(1/6) }{\epsilon}\bigg) \right\rceil
    + \log_2\left(\frac{L}{\mu_0}\right) \right)
           \left(1+\frac{1}{\sqrt{n}}\cdot\frac{GD}{4V_0+2G^2/\lambda}\right),
\end{align*}
and $C_2$ and $C_3$ are the same as given in Theorem~\ref{thm:DiSCO-stochastic}.
Ignoring constants and logarithmic terms, we obtain the desired result.
\end{proof}

In both Theorems~\ref{thm:DiSCO-stochastic} and~\ref{thm:ada-DiSCO-stochastic},
the parameter $\rho=\frac{\sqrt{6}G}{\sqrt{n}D}$ 
depends on a constant~$D$ such that $\E[\ltwos{\wstar}^2]\leq D^2$.
In practice, it may be hard to give a tight estimate of $\E[\ltwos{\wstar}^2]$.
An alternative is to fix a desired value of~$D$ and consider the 
constrained optimization problem
\[
    \minimize_{\ltwos{w} \leq D}~ f(w).
\]
To handle the constraint $\ltwos{w}\leq D$, 
we need to replace the inexact damped Newton method
in DiSCO with an inexact \emph{proximal} Newton method, and replace the
distributed PCG method for solving the Newton system with a 
preconditioned accelerated proximal gradient method. 
Further details of such an extension are given in 
Section~\ref{sec:composite-minimization}. 

\paragraph{Remarks}
The expectation bounds on the rounds of communication given 
in Theorems~\ref{thm:DiSCO-stochastic} and~\ref{thm:ada-DiSCO-stochastic} 
are obtained by combining two consequences of averaging over a large number 
of i.i.d.\ local samples. 
One is the expected reduction of the initial gap $f(w_0)-f(\wstar)$
(Lemma~\ref{lemma:initialization-accuracy}), 
which helps to mitigate the effect of objective scaling used to make~$f$
standard self-concordant.
The other is a high-probability bound that characterizes the similarity
between the local and global Hessians (Corollary~\ref{coro:high-prob-bd-mu}).
If the empirical loss~$f$ is standard self-concordant without scaling, 
then we can regard $f(w_0)-f(\wstar)$ as a constant, and only need to
use Corollary~\ref{coro:high-prob-bd-mu} to obtain a high-probability bound.
This is demonstrated for the case of linear regression in 
Section~\ref{sec:linear-regression}.

For applications where the loss function needs to be rescaled to be standard
self-concordant, the convexity parameter $\lambda$ as well as the ``constants''
$(V_0, G, L, M)$ in Assumption~\ref{asmp:smoothness} also need to be rescaled.
If the scaling factor grows with~$n$, then we need to rely on
Lemma~\ref{lemma:initialization-accuracy} to balance the effects of scaling.
As a result, we only obtain bounds on the expected number of 
communication rounds. 
These are demonstrated in Section~\ref{sec:classification}
for binary classification with logistic regression and a smoothed hinge loss.

\subsection{Application to linear regression}
\label{sec:linear-regression}

We consider linear regression with quadratic regularization (ridge regression).
More specifically, we minimize the overall empirical loss function
\begin{align}\label{eqn:quadratic-erm}
    f(w) = \frac{1}{mn} \sum_{i=1}^m \sum_{j=1}^n (y_{i,j} - w^T x_{i,j})^2 + \frac{\lambda}{2}\ltwos{w}^2,
\end{align}
where the i.i.d.~instances $(x_{i,j},y_{i,j})$ are sampled from 
$\mathcal{X}\times\mathcal{Y}$.
We assume that $\mathcal{X}\subset\R^d$ and $\mathcal{Y}\subset\R$
are bounded: there exist constants $B_x$ and $B_y$ such that
$\ltwos{x}\leq B_x$ and $|y|\leq B_y$ for any 
$(x,y)\in \mathcal{X}\times\mathcal{Y}$. 
It can be shown that the least-squares loss $\phi(w,(x,y)) = (y-w^T x)^2$ 
satisfies Assumption~\ref{asmp:smoothness} with
\[
    V_0 = B_y^2, \qquad G=2B_x\left(B_y+B_x B_y\sqrt{2/\lambda}\right) , 
   \qquad L=\lambda + 2 B_x^2, \qquad M=0.
\]
Thus we can apply 
Theorems~\ref{thm:DiSCO-stochastic} and~\ref{thm:ada-DiSCO-stochastic}
to obtain an expectation bound on the number of communication rounds for DiSCO.
For linear regression, however, we can obtain a stronger result. 

Since~$f$ is a quadratic function, it is self-concordant with parameter~$0$, 
and by definition also standard self-concordant (with parameter $2$).
In this case, we do not need to rescale the objective function, 
and can regard the initial gap $f(w_0)-f(\wstar)$ as a constant.
As a consequence, we can directly apply Theorem~\ref{thm:deterministic-bd}
and Corollary~\ref{coro:high-prob-bd-mu} to obtain a high probability bound
on the communication complexity, 
which is stronger than the expectation bounds in
Theorems~\ref{thm:DiSCO-stochastic} and~\ref{thm:ada-DiSCO-stochastic}.
In particular, Theorem~\ref{thm:deterministic-bd} states that if
\begin{equation}\label{eqn:hessian-mu-bd}
    \bigl\|f''_1(w_k)-f''(w_k)\bigr\|_2\leq\mu, 
    \quad\mbox{for all}\quad k=0,1,2,\ldots,
\end{equation}
then the number of communication rounds~$T$ is bounded as
\[
    T \leq 1 + \left(\biggl\lceil \frac{f(w_0)-f(\wstar)}{\omega(1/6)} \biggr\rceil
    + \left\lceil \log_2\left(\frac{2\omega(1/6)}{\epsilon}\right) \right\rceil \right)
    \left(2 + \sqrt{1+\frac{2\mu}{\lambda}}
    \log\left(\frac{2L}{\beta\lambda}\right) \right) .
\]
Since there is no scaling, the initial gap $f(w_0)-f(\wstar)$ can be considered
as a constant. For example, we can simply pick $w_0=0$ and have
\[
    f(0)-f(\wstar) \leq  f(0) = \frac{1}{N} \sum_{i=1}^N y_i^2 \leq B_y^2 .
\]
By Corollary~\ref{coro:high-prob-bd-mu} and the fact that $M=0$ for quadratic
functions, the condition~\eqref{eqn:hessian-mu-bd} holds with probability 
at least $1-\delta$ if we choose
\begin{equation}\label{eqn:quadratic-mu}
    \mu = \sqrt{\frac{32 L^2 d}{n}}\sqrt{\frac{\log(md/\delta)}{d}}
    = \frac{8 L}{\sqrt{n}} \sqrt{2\log(md/\delta)} .
\end{equation}
Further using $L\leq \lambda + 2 B_x^2$, we obtain the following corollary.

\begin{corollary}\label{coro:quadratic-loss-complexity}
Suppose we apply DiSCO (Algorithm~\ref{alg:DiSCO}) to minimize~$f(w)$
defined in~\eqref{eqn:quadratic-erm} with the input parameter~$\mu$
in~\eqref{eqn:quadratic-mu}, and let~$T$ be the total number of communication 
rounds required to find an $\epsilon$-optimal solution.
With probability at least $1-\delta$, we have
\begin{equation}\label{eqn:quadratic-comm-bd}
T =  \widetilde \order\Big( \Big( 1 + \frac{B_x  }{\lambda^{1/2} n^{1/4}} \Big)\log(1/\epsilon) \log(md/\delta) \Big).
\end{equation}
\end{corollary}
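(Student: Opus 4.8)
The plan is to obtain Corollary~\ref{coro:quadratic-loss-complexity} as a direct specialization of Theorem~\ref{thm:deterministic-bd} to the ridge-regression objective~\eqref{eqn:quadratic-erm}. First I would record that the least-squares loss $\phi(w,(x,y))=(y-w^Tx)^2$ satisfies Assumption~\ref{asmp:smoothness} with the constants listed in the surrounding text---in particular $M=0$ because $\phi(\cdot,(x,y))$ is quadratic, and $L=\lambda+2B_x^2$---so that Assumption~\ref{asmp:Hessian-bounds} holds and, being quadratic, $f$ is self-concordant with parameter $0$, hence standard self-concordant with no rescaling. Thus Theorem~\ref{thm:deterministic-bd} applies verbatim once we control its two problem-dependent inputs: the initial gap $f(w_0)-f(\wstar)$ and an a priori bound $\mu$ on $\ltwos{f''_1(w_k)-f''(w_k)}$ valid for all $k$.

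For the initial gap, since there is no scaling factor I would simply take $w_0=0$, which gives $f(w_0)-f(\wstar)\le f(0)=\frac{1}{N}\sum_{i=1}^N y_i^2\le B_y^2$, a constant independent of $n$, $m$, and $\lambda$. For the Hessian deviation I would invoke Corollary~\ref{coro:high-prob-bd-mu}: because $M=0$, the $\log(1+rM\sqrt{2n}/L)$ term inside~\eqref{eqn:choose-mu} vanishes and the bound collapses to $\mu=\sqrt{32L^2d/n}\,\sqrt{\log(md/\delta)/d}$, i.e.\ exactly the choice~\eqref{eqn:quadratic-mu}. Hence, with probability at least $1-\delta$ over the draw of the i.i.d.\ data, the event~\eqref{eqn:hessian-mu-bd} holds for \emph{all} iterates $w_k$ simultaneously, which is precisely the hypothesis required by Theorem~\ref{thm:deterministic-bd}.

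On that event I would plug both bounds into~\eqref{eqn:disco-communication-bound}. The outer-loop factor $\lceil (f(w_0)-f(\wstar))/(\tfrac12\omega(1/6))\rceil+\lceil\log_2(2\omega(1/6)/\epsilon)\rceil$ is $\order(1+\log(1/\epsilon))$ since the initial gap is a constant. For the inner-loop factor, using $L\le\lambda+2B_x^2$ gives $\mu/\lambda=\widetilde\order\!\big(B_x^2/(\lambda\sqrt n)\big)$, so $\sqrt{1+2\mu/\lambda}=\widetilde\order\!\big(1+B_x/(\lambda^{1/2}n^{1/4})\big)$; the remaining factor $\log(2L/(\beta\lambda))$ is logarithmic and is absorbed by $\widetilde\order(\cdot)$, while the $\sqrt{\log(md/\delta)}$ coming from $\mu$ is dominated by the explicit $\log(md/\delta)$ in the statement. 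Multiplying the two factors yields~\eqref{eqn:quadratic-comm-bd}.

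There is no deep obstacle here---all of the heavy lifting is already in Theorem~\ref{thm:deterministic-bd}, Corollary~\ref{coro:high-prob-bd-mu} (which in turn rests on the matrix concentration of Lemma~\ref{lemma:uniform-matrix-concentration}), and the elementary observation $M=0$. The one point requiring a little care is composing a high-probability statement with a deterministic convergence bound: Corollary~\ref{coro:high-prob-bd-mu} already bounds $\ltwos{f''_1(w)-f''(w)}$ uniformly over the Euclidean ball that provably contains every $w_k$, so conditioning on its $(1-\delta)$-event makes all hypotheses of Theorem~\ref{thm:deterministic-bd} hold along the actual trajectory, and the final bound inherits that same probability. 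A minor bookkeeping detail is reconciling the initialization: one may either run DiSCO from $w_0=0$ as above, or keep the averaging initialization~\eqref{eqn:initialization} and note it too yields a constant-order gap (via Lemma~\ref{lemma:initialization-accuracy} together with strong convexity), which affects only constants.
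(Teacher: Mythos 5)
Your proposal is correct and follows essentially the same route as the paper: since the ridge-regression objective is quadratic it is standard self-concordant without rescaling, so one takes the initial gap as a constant (e.g., $w_0=0$ gives $f(0)-f(\wstar)\le B_y^2$), invokes Corollary~\ref{coro:high-prob-bd-mu} with $M=0$ to certify the choice~\eqref{eqn:quadratic-mu} of $\mu$ with probability $1-\delta$, and plugs both into the deterministic bound~\eqref{eqn:disco-communication-bound} of Theorem~\ref{thm:deterministic-bd} using $L\le\lambda+2B_x^2$. The bookkeeping about the $\sqrt{\log(md/\delta)}$ factor being absorbed is also consistent with the paper.
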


We note that the same conclusion also holds for the adaptive DiSCO algorithm
(Algorithm~\ref{alg:ada-DiSCO}), where we do not need to specify the input
parameter~$\mu$ based on~\eqref{eqn:quadratic-mu}.
For the adaptive DiSCO algorithm, the bound 
in~\eqref{eqn:quadratic-comm-bd} holds for any $\delta\in(0,1)$.

The communication complexity guaranteed by 
Corollary~\ref{coro:quadratic-loss-complexity} is strictly better than that 
of distributed implementation of the accelerated gradient method and ADMM
(\cf~Table~\ref{tab:complexities}).
If we choose $\lambda = \Theta(1/\sqrt{mn})$,
then Corollary~\ref{coro:quadratic-loss-complexity} implies 
\[
    T = \widetilde \order\left(m^{1/4} \log(1/\epsilon)\right)
\]
with high probability.
The DANE algorithm~\cite{ShamirSrebroZhang14DANE}, under the same setting, 
converges in $\widetilde \order(m \log(1/\epsilon))$ iterations with high 
probability (and each iteration requires two rounds of communication).
Thus DiSCO enjoys a better communication efficiency.

\subsection{Application to binary classification}
\label{sec:classification}

For binary classification, we consider the following regularized
empirical loss function
\begin{align}\label{eqn:classification-erm}
    \ell(w) \eqdef \frac{1}{mn} \sum_{i=1}^m\sum_{j=1}^n 
    \varphi(y_{i,j} w^T x_{i,j}) + \frac{\gamma}{2}\ltwos{w}^2,
\end{align}
where $x_{i,j}\in \mathcal{X}\subset\R^d$, $y_{i,j}\in\{-1,1\}$, and 
$\varphi: \R\to\R$ is a convex surrogate function for the binary loss.
We further assume that the elements of $\mathcal{X}$ are bounded, \ie,
we have $\sup_{x\in \mathcal{X}} \ltwos{x} \leq B$ for some finite $B$. 

Under the above assumptions, Lemma~\ref{lemma:regularized-self-concordance}
gives conditions on~$\varphi$ for~$\ell$ to be self-concordant.
As we have seen in Section~\ref{sec:self-concordance}, the function~$\ell$
usually needs to be scaled by a large factor to become standard self-concordant.
Let the scaling factor be~$\eta$, we can use DiSCO to minimize the scaled 
function $f(w)=\eta\ell(w)$. 
Next we discuss the theoretical implications for logistic regression and 
the smoothed hinge loss constructed in Section~\ref{sec:self-concordance}.
These results are summarized in Table~\ref{tab:complexities}.

\paragraph{Logistic Regression}
For logistic regression, we have $\varphi(t) = \log(1+e^{-t})$. 
In Section~\ref{sec:self-concordance}, we have shown that the logistic loss
satisfies the condition of Lemma~\ref{lemma:regularized-self-concordance}
with $Q=1$ and $\alpha = 0$.
Consequently, with the factor $\eta = \frac{B^2}{4\gamma}$, 
the rescaled function $f(w) = \eta \ell(w)$ is standard self-concordant. 
If we express $f$ in the standard form
\begin{equation}\label{eqn:standard-f}
    f(w) = \frac{1}{mn} \sum_{i=1}^m\sum_{j=1}^n 
    \phi(y_{i,j} w^T x_{i,j}) + \frac{\lambda}{2}\ltwos{w}^2,
\end{equation}
then we have $\phi(w, (x, y)) = \eta \varphi(y w^T x)$ and 
$\lambda = \eta \gamma$.
It is easy to check that Assumption~\ref{asmp:smoothness} holds with
\begin{align*}
	V_0 = \eta \log(2), \qquad 
    G = \eta B, \qquad 
    L = \eta (B^2/4 + \gamma), \qquad
    M = \eta B^3/10,
\end{align*}
which all containing the scaling factor~$\eta$.
Plugging these scaled constants into Theorems~\ref{thm:DiSCO-stochastic}
and~\ref{thm:ada-DiSCO-stochastic}, we have the following corollary.

\begin{corollary}\label{coro:logistic-regression-complexity}
For logistic regression, the number of communication rounds required by DiSCO 
to find an $\epsilon$-optimal solution is bounded by
\begin{align*}
	\E[T] =  \widetilde \order\Big( \Big(\log(1/\epsilon)+\frac{B^3 D}{\gamma n^{1/2}}  \Big)\Big(1+\frac{B d^{1/4}}{\gamma^{1/2} n^{1/4}} \Big)\Big).
\end{align*}
\end{corollary}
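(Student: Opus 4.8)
The plan is to derive this corollary directly from Theorem~\ref{thm:DiSCO-stochastic} (or, to avoid specifying $\mu$ in advance, from the adaptive version, Theorem~\ref{thm:ada-DiSCO-stochastic}) by plugging in the scaled constants of the logistic loss. Recall from Section~\ref{sec:self-concordance} that $f(w)=\eta\,\ell(w)$ with $\eta=B^2/(4\gamma)$ is standard self-concordant. Writing $f$ in the standard form~\eqref{eqn:standard-f} with $\phi(w,(x,y))=\eta\varphi(yw^Tx)$ and $\reg=\eta\gamma$, the first step is to check that Assumption~\ref{asmp:smoothness} holds with $V_0=\eta\log 2$, $\gradbound=\eta B$, $\hessianbound=\eta(B^2/4+\gamma)$ and $\tensorbound=\eta B^3/10$. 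This is a routine computation using $\varphi\geq 0$, $\varphi(0)=\log 2$, $\sup_t|\varphi'(t)|\leq 1$, $\sup_t\varphi''(t)\leq 1/4$, $\sup_t|\varphi'''(t)|\leq 1/10$ and $\ltwos{xx^T}\leq B^2$; note that condition~$(ii)$ holds for all $w$ here, so the restriction to $\ltwos{w}\leq\sqrt{2V_0/\reg}$ plays no role.

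The second and main step is to track how the scaling factor $\eta$ propagates through the bound of Theorem~\ref{thm:DiSCO-stochastic}. Since $f(\what)-f(\wstar)=\eta\bigl(\ell(\what)-\ell(\wstar)\bigr)$, guaranteeing $\ell(\what)-\ell(\wstar)\leq\epsilon$ amounts to running DiSCO on $f$ with target accuracy $\eta\epsilon$; because $\eta\geq 1$ in the regime of interest, the logarithmic term obeys $\log(1/(\eta\epsilon))\leq\log(1/\epsilon)$ and contributes only $\widetilde\order(\log(1/\epsilon))$. The crucial observation is that $\eta$ cancels in every ratio of constants that enters the bound --- $\hessianbound/\reg=(B^2/4+\gamma)/\gamma$, $V_0/\reg=\log 2/\gamma$, $\gradbound/\reg=B/\gamma$, $\tensorbound/\hessianbound=B^3/\bigl(10(B^2/4+\gamma)\bigr)$ --- so the quantities $r$, $C_2=\log(2\hessianbound/(\beta\reg))$, $C_3$ and the failure probability $\delta$ in~\eqref{eqn:choose-delta} all depend only on $B,\gamma,D$ and (logarithmically) on $n,d,m$, but not on $\eta$. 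In particular $\sqrt{\hessianbound/\reg}=\sqrt{B^2/(4\gamma)+1}=\widetilde\order(B/\sqrt{\gamma})$, so the second factor of the bound simplifies to $\widetilde\order\bigl(1+Bd^{1/4}/(\gamma^{1/2}n^{1/4})\bigr)$. The only place where $\eta$ survives is the term $\gradbound D/\sqrt{n}=\eta BD/\sqrt{n}=B^3D/(4\gamma n^{1/2})$ in the first factor, which is exactly what produces the $B^3D/(\gamma n^{1/2})$ contribution.

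Multiplying the two simplified factors and absorbing all $\widetilde\order(1)$ and logarithmic constants ($C_1$, $C_2$, $C_3$, and the logarithmic dependence on $\delta$) yields the claimed bound $\E[T]=\widetilde\order\bigl((\log(1/\epsilon)+B^3D/(\gamma n^{1/2}))(1+Bd^{1/4}/(\gamma^{1/2}n^{1/4}))\bigr)$; the adaptive method gives the same conclusion via Theorem~\ref{thm:ada-DiSCO-stochastic}. I do not anticipate any substantive difficulty: the argument is essentially bookkeeping. The one point needing care is the asymmetric cancellation of $\eta$ just described --- it survives as $\eta B=B^3/(4\gamma)$ in the initial-gap term $\gradbound D/\sqrt{n}$ but disappears from every condition-number-type ratio, which is what creates the $1/\gamma$ versus $1/\sqrt{\gamma}$ split in the final expression --- and the only nontrivial elementary fact to confirm is the bound $\sup_t|\varphi'''(t)|\leq 1/10$ for the logistic loss, which justifies the stated value of $\tensorbound$.
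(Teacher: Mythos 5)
Your proposal is correct and follows exactly the paper's route: the paper's own justification for this corollary is precisely to plug the scaled constants $V_0=\eta\log 2$, $\gradbound=\eta B$, $\hessianbound=\eta(B^2/4+\gamma)$, $\tensorbound=\eta B^3/10$, $\reg=\eta\gamma$ into Theorems~\ref{thm:DiSCO-stochastic} and~\ref{thm:ada-DiSCO-stochastic}, with the same observation that $\eta$ cancels in all condition-number ratios but survives in $\gradbound D/\sqrt{n}=\eta BD/\sqrt{n}$. Your one flagged concern is fine: $\sup_t|\varphi'''(t)|=1/(6\sqrt{3})\approx 0.096<1/10$, so the stated $\tensorbound$ is valid.
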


\noindent
In the specific case when $\gamma = \Theta(1/\sqrt{mn})$, Corollary~\ref{coro:logistic-regression-complexity}
implies
\[
	\E[T] = \widetilde \order\left(m^{3/4} d^{1/4} + m^{1/4} d^{1/4} \log(1/\epsilon)\right).
\]
If we ignore logarithmic terms, then the expected number of communication 
rounds is independent of the sample size~$n$, and only grows slowly with
the number of machines~$m$.

\paragraph{Smoothed Hinge Loss}
We consider minimizing~$\ell(w)$ in~\eqref{eqn:classification-erm} 
where the loss function~$\varphi$ is the smoothed hinge loss defined 
in~\eqref{eqn:smoothed-hinge}, which depends on a parameter $p\geq 3$.
Using Lemma~\ref{lemma:regularized-self-concordance}, we have shown in
Section~\ref{sec:self-concordance} that $\ell(w)$ is self-concordant
with parameter~$M_p$ given in~\eqref{eqn:smoothed-sc-p}.
As a consequence, by choosing
\[
    \eta = \frac{M_p^2}{4} 
    = \frac{(p-2)^2 B^{2 + \frac{4}{p-2}}}{4\gamma^{1 + \frac{2}{p-2}}},
\]
the function $f(w) = \eta \ell(w)$ is standard self-concordant.
If we express~$f$ in the form of~\eqref{eqn:standard-f}, then
$\phi(w,(x,y)) = \eta \varphi_p(y w^T x)$ and $\lambda = \eta \gamma$.
It is easy to verify that Assumption~\ref{asmp:smoothness} holds with
\begin{align*}
V_0= \eta , \qquad 
G = \eta B, \qquad 
L = \eta ( B^2 + \lambda), \qquad
M = \eta(p-2) B^3.
\end{align*}
If we choose $p = 2 + \log(1/\gamma)$, then applying 
Theorems~\ref{thm:DiSCO-stochastic} and~\ref{thm:ada-DiSCO-stochastic} yields
the following result.

\begin{corollary}\label{coro:smooth-hinge-complexity}
For the smoothed hinge loss $\varphi_p$ defined in~\eqref{eqn:smoothed-hinge}
with $p = 2 + \log(1/\gamma)$, the total number of communication rounds 
required by DiSCO to find an $\epsilon$-optimal solution is bounded by
\begin{align*}
	\E[T] =  \widetilde \order\Big( \Big(\log(1/\epsilon)+\frac{B^{3} D}{\gamma n^{1/2}} \Big)\Big(1+\frac{B d^{1/4}}{\gamma^{1/2} n^{1/4}} \Big)\Big).
\end{align*}
\end{corollary}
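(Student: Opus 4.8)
The plan is to obtain the corollary as a direct specialization of Theorem~\ref{thm:DiSCO-stochastic} (and of Theorem~\ref{thm:ada-DiSCO-stochastic} for the adaptive variant), applied to the rescaled objective $f(w)=\eta\,\ell(w)$ with $\eta=M_p^2/4$ and $M_p$ as in~\eqref{eqn:smoothed-sc-p}; by Lemma~\ref{lemma:regularized-self-concordance}(b) this $f$ is standard self-concordant. Writing $f$ in the form~\eqref{eqn:standard-f} with $\phi(w,(x,y))=\eta\,\varphi_p(y w^T x)$ and $\lambda=\eta\gamma$, and assuming $\E[\ltwos{\wstar}^2]\le D^2$ as in Theorem~\ref{thm:DiSCO-stochastic}, the first task is to verify that Assumption~\ref{asmp:smoothness} holds for the rescaled loss with the listed constants $V_0=\eta$, $G=\eta B$, $L=\eta(B^2+\gamma)$, and $M=\eta(p-2)B^3$; one then sets the input parameters $\rho=\sqrt{6}\,G/(\sqrt{n}\,D)$ and $\mu=\mu_{r,\delta}$ and reads off the bound of Theorem~\ref{thm:DiSCO-stochastic}.

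For the verification I would use the explicit piecewise formulas for $\varphi_p$ in~\eqref{eqn:smoothed-hinge} and for its derivatives recorded in Section~\ref{sec:self-concordance}. Nonnegativity of $\varphi_p$ and $\varphi_p(0)\le 1$ are immediate, giving condition (i) with $V_0=\eta$. Since $\varphi_p$ has unit-slope linear tails and a monotone derivative in between, $|\varphi_p'(t)|\le 1$ for all $t$; together with $\ltwos{x}\le B$ this yields the \emph{global} bound $\ltwos{\phi'(w,z)}\le\eta B$, so (ii) holds with $G=\eta B$ (being global, it holds in particular on the ball $\ltwos{w}\le\sqrt{2V_0/\lambda}$). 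Inspection of the three nonlinear segments shows $0\le\varphi_p''(t)\le 1$ everywhere, hence $\ltwos{\phi''(w,z)}\le\eta B^2=L-\lambda$, which is (iii). Finally, on the two intervals where $\varphi_p'''$ is nonzero one has $\varphi_p'''(t)=\pm(p-2)(\cdot)^{p-3}$ with base in $[0,1]$ (using $p\ge 3$), so $|\varphi_p'''(t)|\le p-2$ and $\phi''(\cdot,z)$ is $\eta(p-2)B^3$-Lipschitz, giving (iv) with $M=\eta(p-2)B^3$.

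With these constants in hand, I would substitute into the asymptotic conclusion of Theorem~\ref{thm:DiSCO-stochastic},
\[
\E[T]=\widetilde\order\left(\Bigl(\log(1/\epsilon)+\tfrac{GD}{n^{1/2}}\Bigr)\Bigl(1+\tfrac{L^{1/2}d^{1/4}}{\lambda^{1/2}n^{1/4}}\Bigr)\right).
\]
The factor $\eta$ cancels in the second term, since $L^{1/2}/\lambda^{1/2}=\sqrt{(B^2+\gamma)/\gamma}=\widetilde\order(B/\gamma^{1/2})$, so that term is already $\widetilde\order\bigl(1+Bd^{1/4}/(\gamma^{1/2}n^{1/4})\bigr)$. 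In the first term, however, $GD/n^{1/2}=\eta BD/n^{1/2}$, so the remaining, and main, work is to estimate the scaling factor
\[
\eta=\frac{(p-2)^2}{4}\,B^{2+4/(p-2)}\,\gamma^{-(1+2/(p-2))}
\]
under the choice $p=2+\log(1/\gamma)$, i.e. $p-2=\log(1/\gamma)$. Then $(p-2)^2=\log^2(1/\gamma)$ is merely logarithmic and absorbed into $\widetilde\order$; the exponent corrections satisfy $B^{4/(p-2)}=\exp\bigl(4(\log B)/\log(1/\gamma)\bigr)=\order(1)$ (for $B$ bounded, or more generally $\log B=o(\log(1/\gamma))$) and $\gamma^{-2/(p-2)}=\order(1)$; hence $\eta=\widetilde\order(B^2/\gamma)$. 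Therefore $GD/n^{1/2}=\widetilde\order\bigl(B^3D/(\gamma n^{1/2})\bigr)$, and plugging back gives exactly the asserted bound
\[
\E[T]=\widetilde\order\left(\Bigl(\log(1/\epsilon)+\tfrac{B^3D}{\gamma n^{1/2}}\Bigr)\Bigl(1+\tfrac{Bd^{1/4}}{\gamma^{1/2}n^{1/4}}\Bigr)\right).
\]

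Since Theorem~\ref{thm:ada-DiSCO-stochastic} yields the identical $\widetilde\order$ estimate for Algorithm~\ref{alg:ada-DiSCO} without requiring $\mu$ as an input, the same conclusion holds for both the fixed-$\mu$ and the adaptive versions of DiSCO. I expect the only delicate point to be the bookkeeping of the $\eta$-dependent constants — verifying that they cancel in the ratio $L/\lambda$ and combine cleanly in the product $GD$ under the specific choice of $p$ — whereas the bounds on the piecewise-polynomial $\varphi_p$ and its first three derivatives are entirely routine.
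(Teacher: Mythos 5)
Your proposal is correct and takes essentially the same route as the paper: the paper likewise verifies Assumption~\ref{asmp:smoothness} for the rescaled loss $f=\eta\ell$ with exactly the constants $V_0=\eta$, $G=\eta B$, $L=\eta(B^2+\gamma)$, $M=\eta(p-2)B^3$, and then simply invokes Theorems~\ref{thm:DiSCO-stochastic} and~\ref{thm:ada-DiSCO-stochastic} with $p=2+\log(1/\gamma)$. The one step the paper leaves implicit---that this choice of $p$ gives $\eta=\widetilde\order(B^2/\gamma)$, so $GD/\sqrt{n}=\widetilde\order\bigl(B^3D/(\gamma\sqrt{n})\bigr)$ while $\eta$ cancels in the ratio $L/\lambda$---is precisely the bookkeeping you spell out.
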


\noindent
Thus, the smoothed hinge loss enjoys the same communication efficiency as the
logistic loss.


\section{Numerical experiments}
\label{sec:experiments}

In this section, we conduct numerical experiments to compare the DiSCO 
algorithm with several state-of-the-art distributed optimization
algorithms: the ADMM algorithm (\eg, \cite{Boyd10ADMM}),
the accelerated full gradient method (AFG) \cite[Section~2.2]{Nesterov04book},
the L-BFGS quasi-Newton method (\eg, \cite[Section~7.2]{NocedalWrightbook}),
and the DANE algorithm~\cite{ShamirSrebroZhang14DANE}. 

The algorithms ADMM, AFG and L-BFGS are well known and each has a rich 
literature. 
In particular, using ADMM for empirical risk minimization in a distributed 
setting is straightforward; see \cite[Section~8]{Boyd10ADMM}.
For AFG and L-BFGS, we use the simple distributed implementation
discussed in Section~\ref{sec:communication-efficiency}: 
at each iteration~$k$, each machine computes the local gradients $f'_i(w_k)$
and sends it to the master machine to form 
$f'(w_k)=(1/m)\sum_{i=1}^m f'_i(w_k)$, and the master machine executes the 
main steps of the algorithm to compute $w_{k+1}$.
The iteration complexities of these algorithms stay the same as 
their classical analysis for a centralized implementation, 
and each iteration usually involves one or two rounds of communication.

Here we briefly describe the DANE (Distributed Approximate NEwton) algorithm
proposed by Shamir et al.~\cite{ShamirSrebroZhang14DANE}.
Each iteration of DANE takes two rounds of communication to compute $w_{k+1}$
from~$w_k$.
The first round of communication is used to compute the gradient
$f'(w_k)=(1/m)\sum_{i=1}^m f'_i(w_k)$.
Then each machine solves the local minimization problem
\[
    v_{k+1,i} = \arg\min_{w\in\R^d} ~\left\{f_i(w) - \langle \fp_i(w_k) - \fp(w_k), w \rangle + \frac{\mu}{2} \ltwos{ w - w_k}^2 \right\},
\]
and take a second round of communication to compute 
$w_{k+1} = (1/m)\sum_{i=1}^m v_{k+1,i}$. 
Here $\mu\geq 0$ is a regularization parameter with a similar role as in DiSCO.
For minimizing the quadratic loss in~\eqref{eqn:quadratic-erm},
the iteration complexity of DANE is 
$\widetilde\order((L/\lambda)^2 n^{-1}\log(1/\epsilon))$.
As summarized in Table~\ref{tab:complexities}, 
if the condition number $L/\lambda$ grows as $\sqrt{mn}$, then DANE is more
efficient than AFG and ADMM when~$n$ is large. 
However, the same complexity cannot be guaranteed for minimizing 
non-quadratic loss functions.
According to the analysis in \cite{ShamirSrebroZhang14DANE},
the convergence rate of DANE on non-quadratic functions
might be as slow as the ordinary full gradient descent method.

\subsection{Experiment setup}

\begin{table}
\renewcommand{\arraystretch}{1.1}
\centering
\begin{tabular}{|c|c|r|r|r|}
  \hline
  Dataset name & number of samples & number of features & sparsity \\
  \hline
  Covtype & 581,012 & 54  & 22\% \\\hline
  RCV1 & 20,242 & 47,236 & 0.16\% \\\hline
  News20 & 19,996 & 1,355,191  & 0.04\% \\
  \hline
\end{tabular}
\caption{Summary of three binary classification datasets.}
\label{tab:data-summary}
\end{table}

\begin{figure}[t]
\begin{tabular}{c|ccc}
$m$ & Covtype & RCV1 & News20 \\\hline
&&&\\
4&
\raisebox{-.5\height}{\includegraphics[width = 0.28\textwidth]{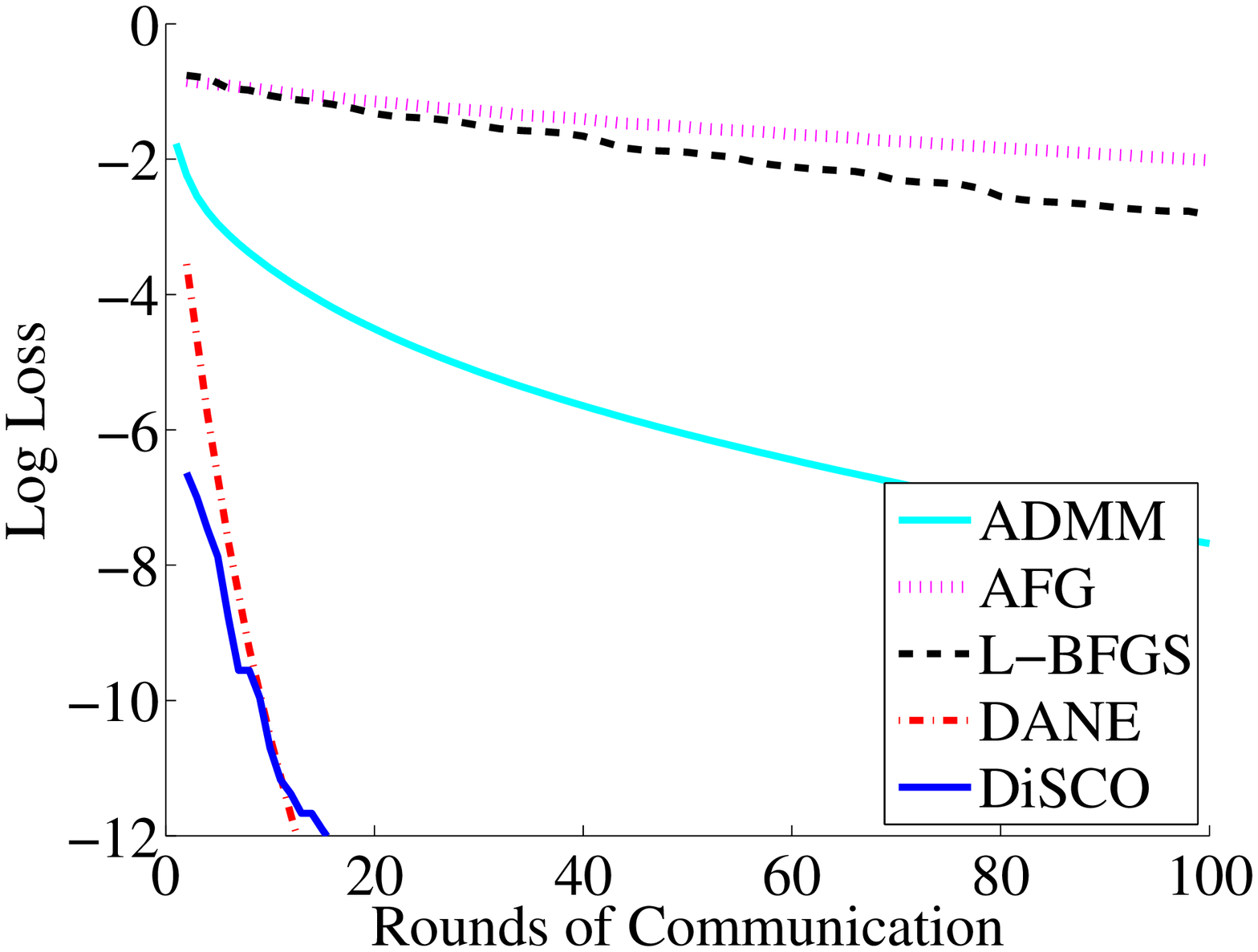}} &
\raisebox{-.5\height}{\includegraphics[width = 0.28\textwidth]{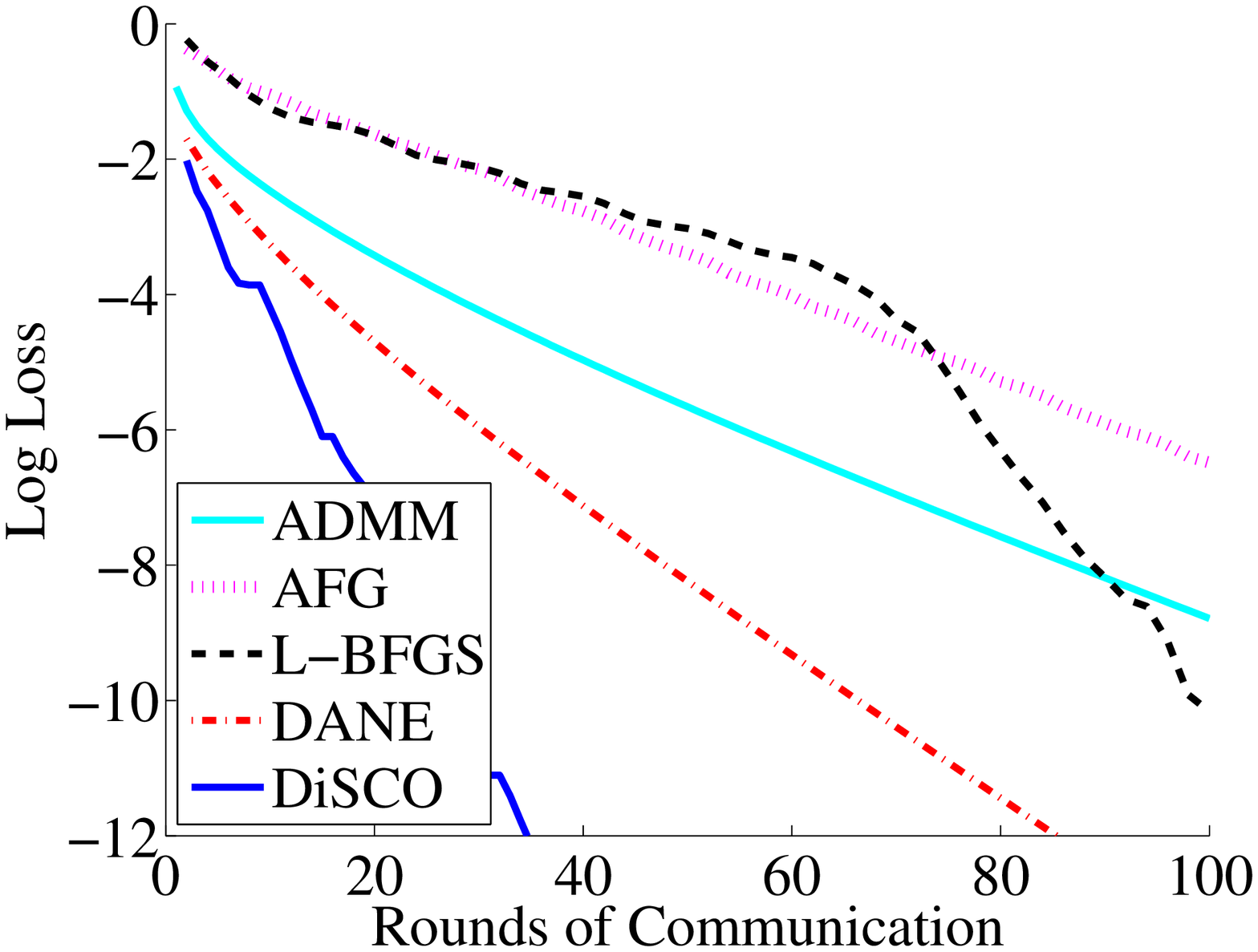}} &
\raisebox{-.5\height}{\includegraphics[width = 0.28\textwidth]{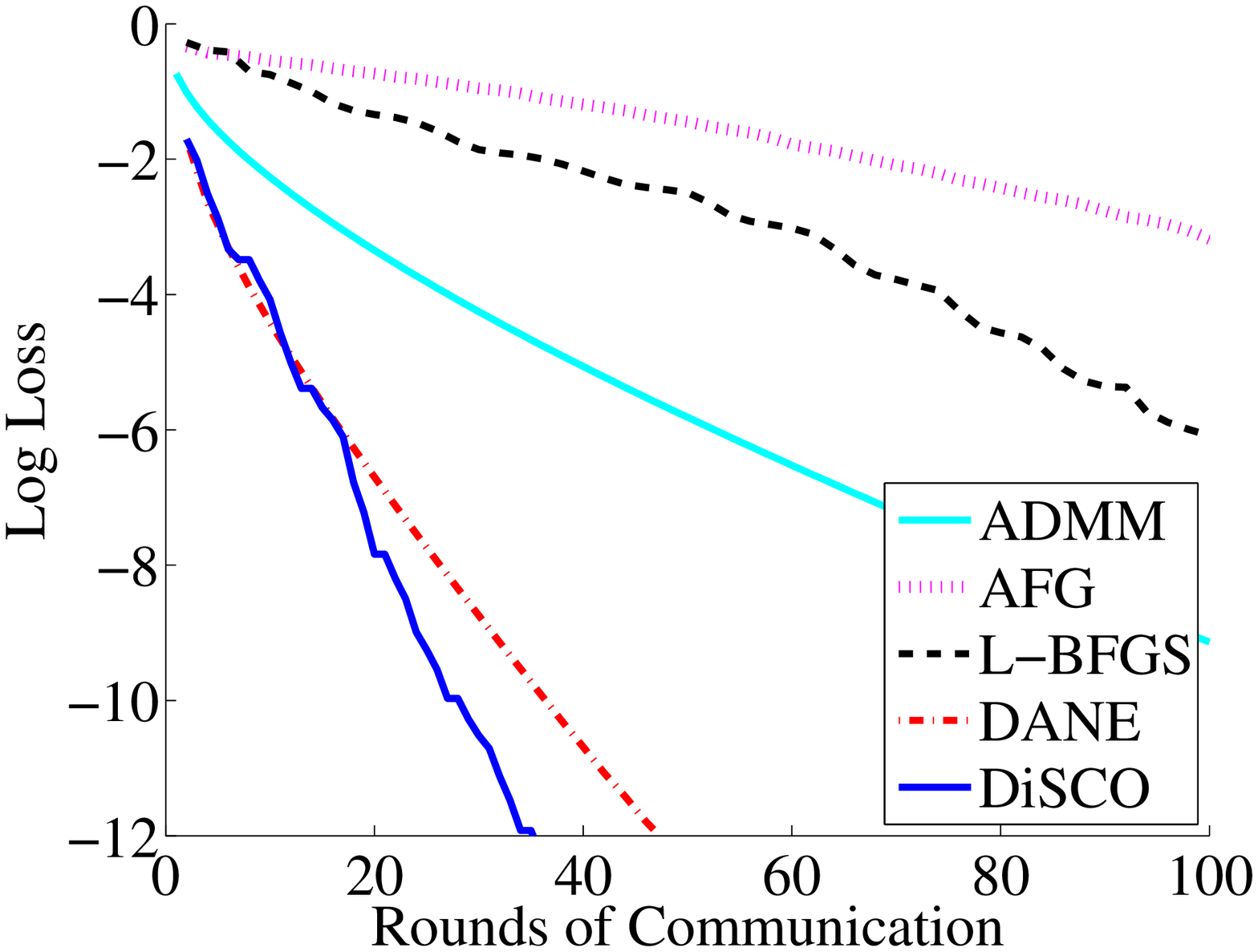}} \\
16&
\raisebox{-.5\height}{\includegraphics[width = 0.28\textwidth]{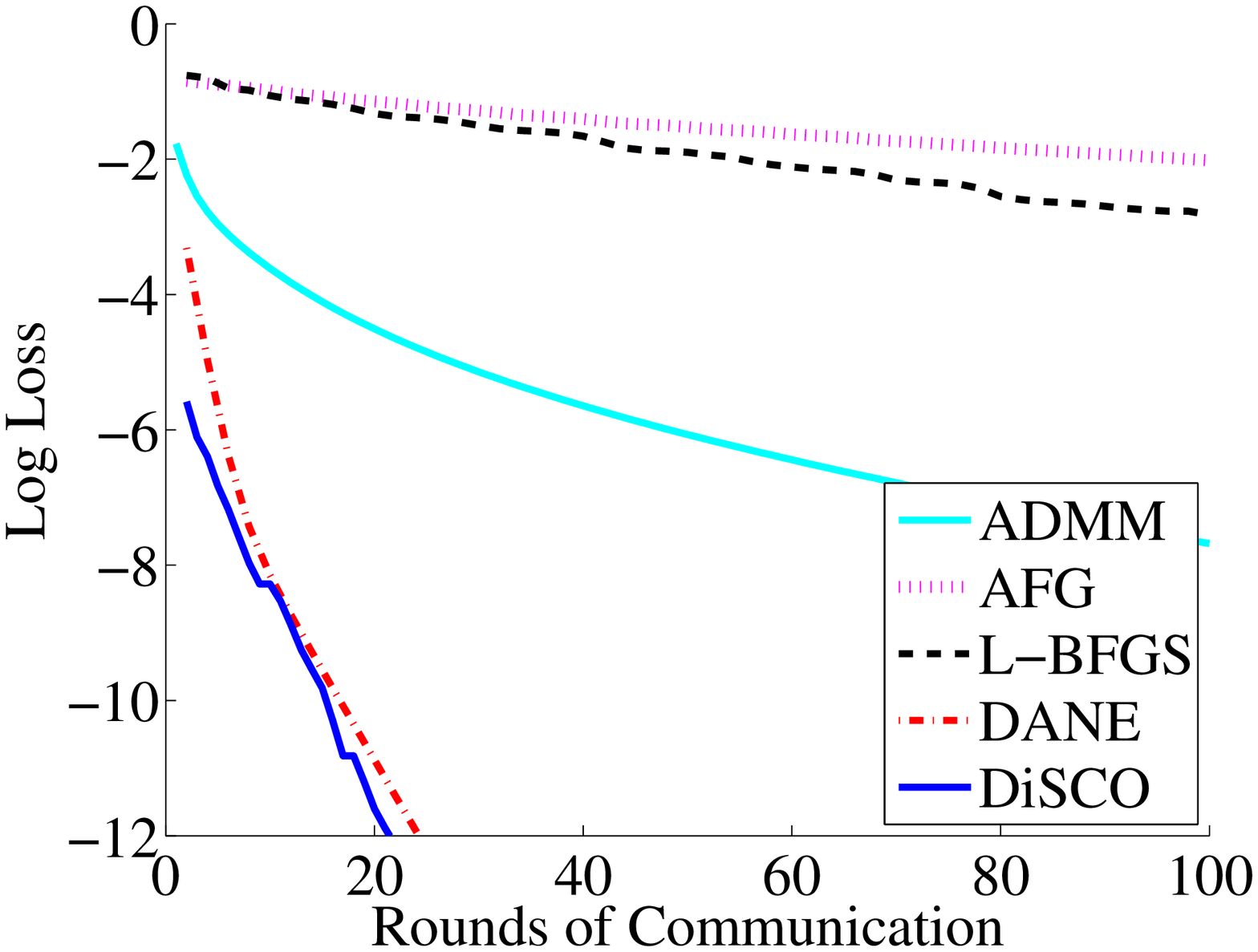}} &
\raisebox{-.5\height}{\includegraphics[width = 0.28\textwidth]{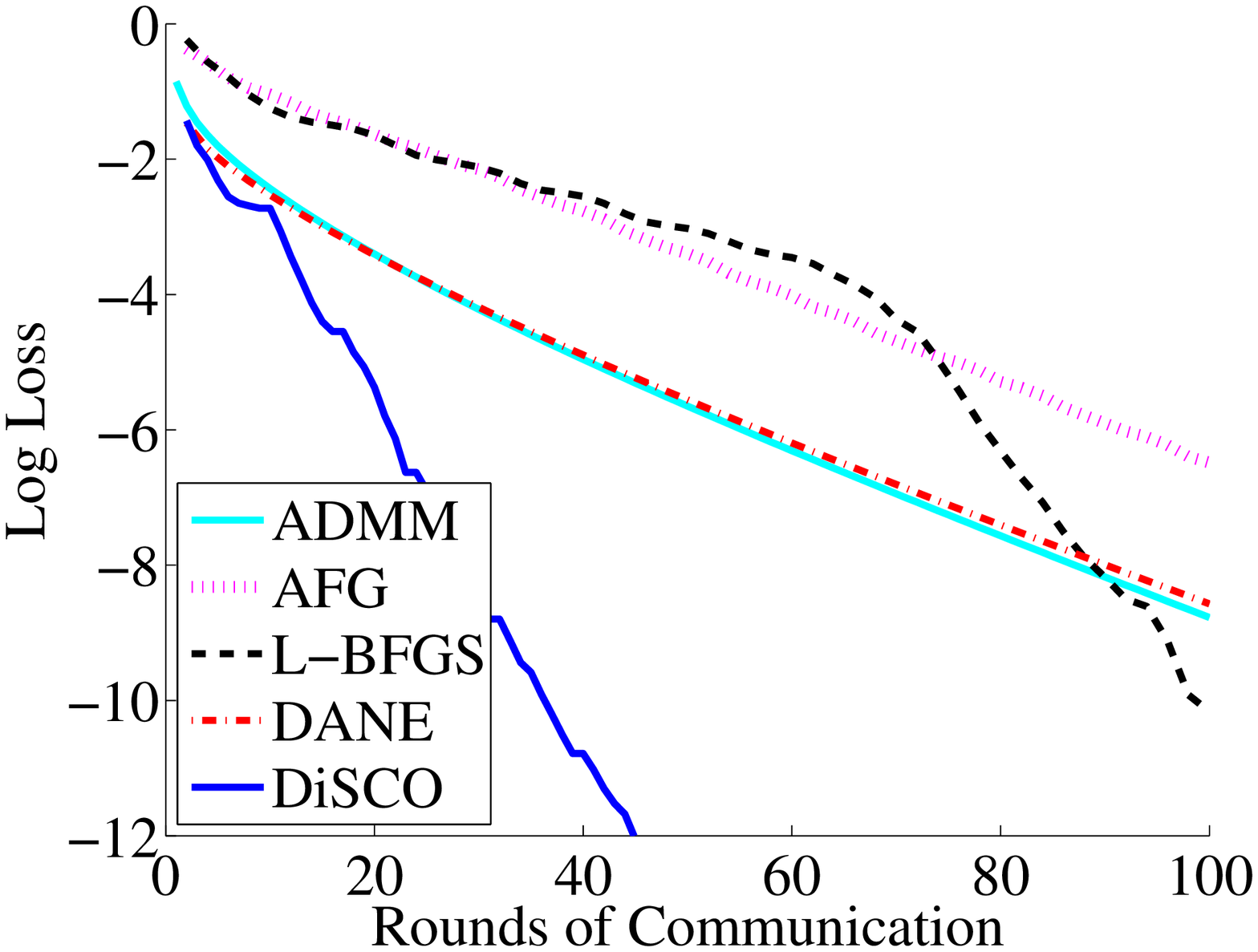}} &
\raisebox{-.5\height}{\includegraphics[width = 0.28\textwidth]{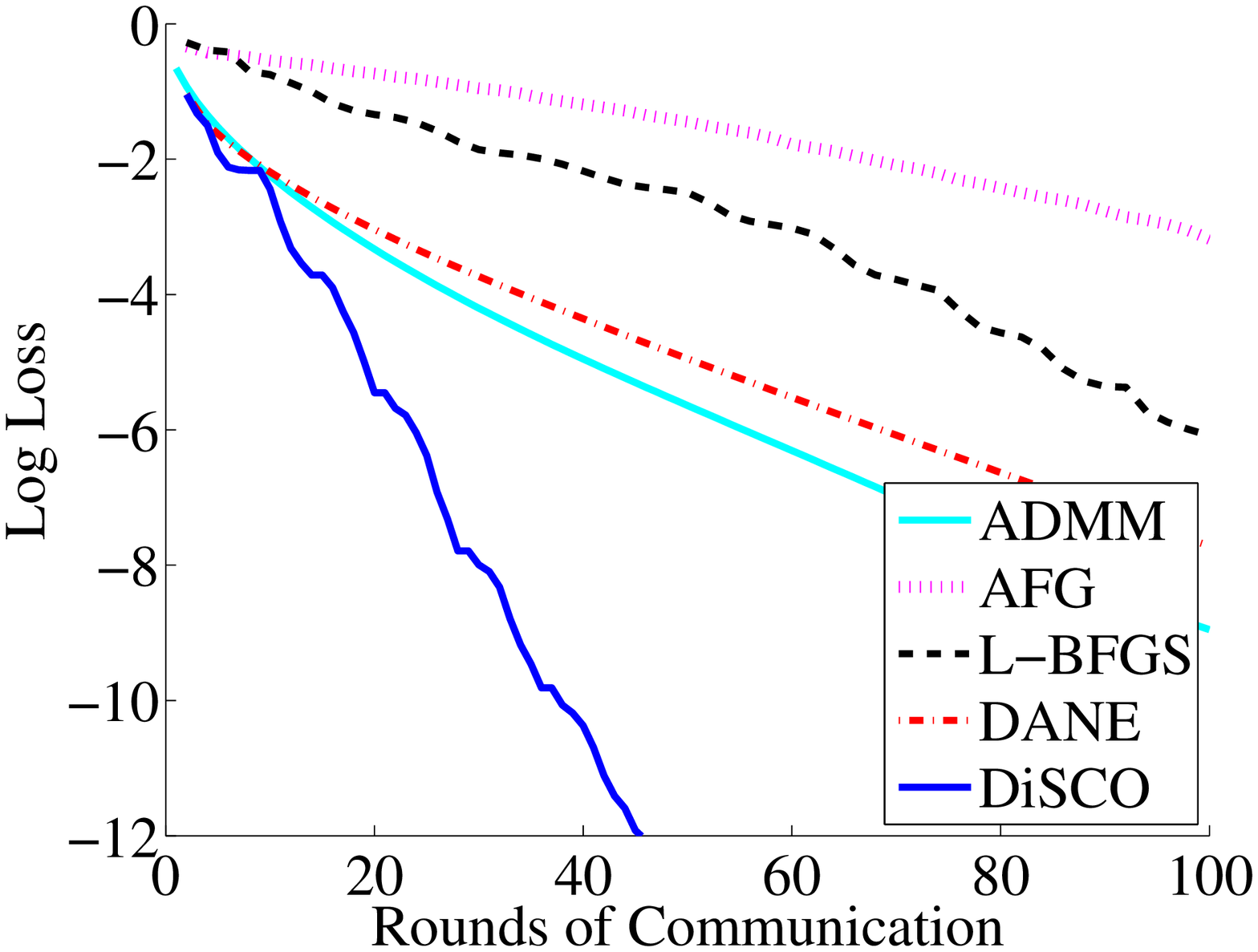}} \\
64&
\raisebox{-.5\height}{\includegraphics[width = 0.28\textwidth]{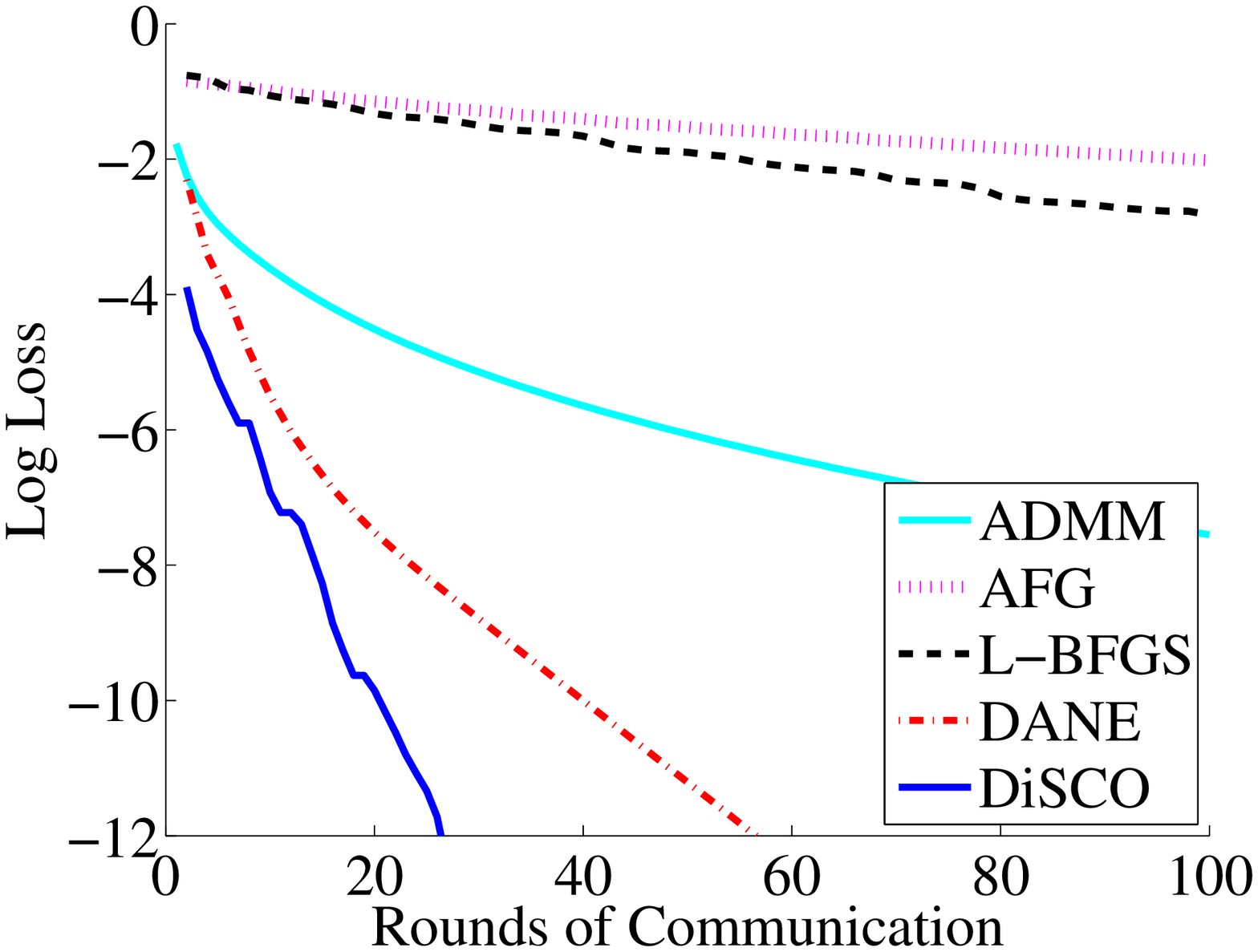}} &
\raisebox{-.5\height}{\includegraphics[width = 0.28\textwidth]{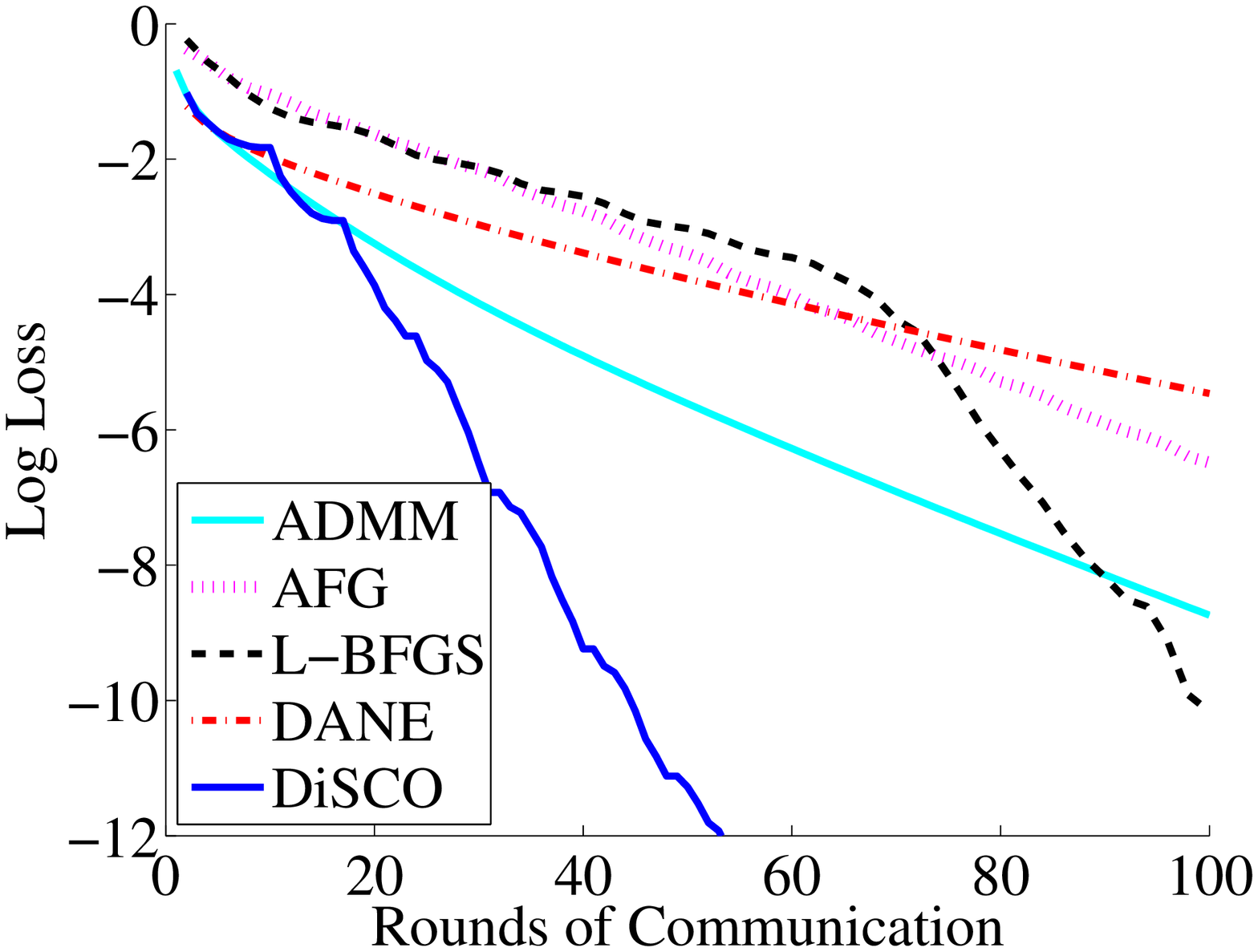}} &
\raisebox{-.5\height}{\includegraphics[width = 0.28\textwidth]{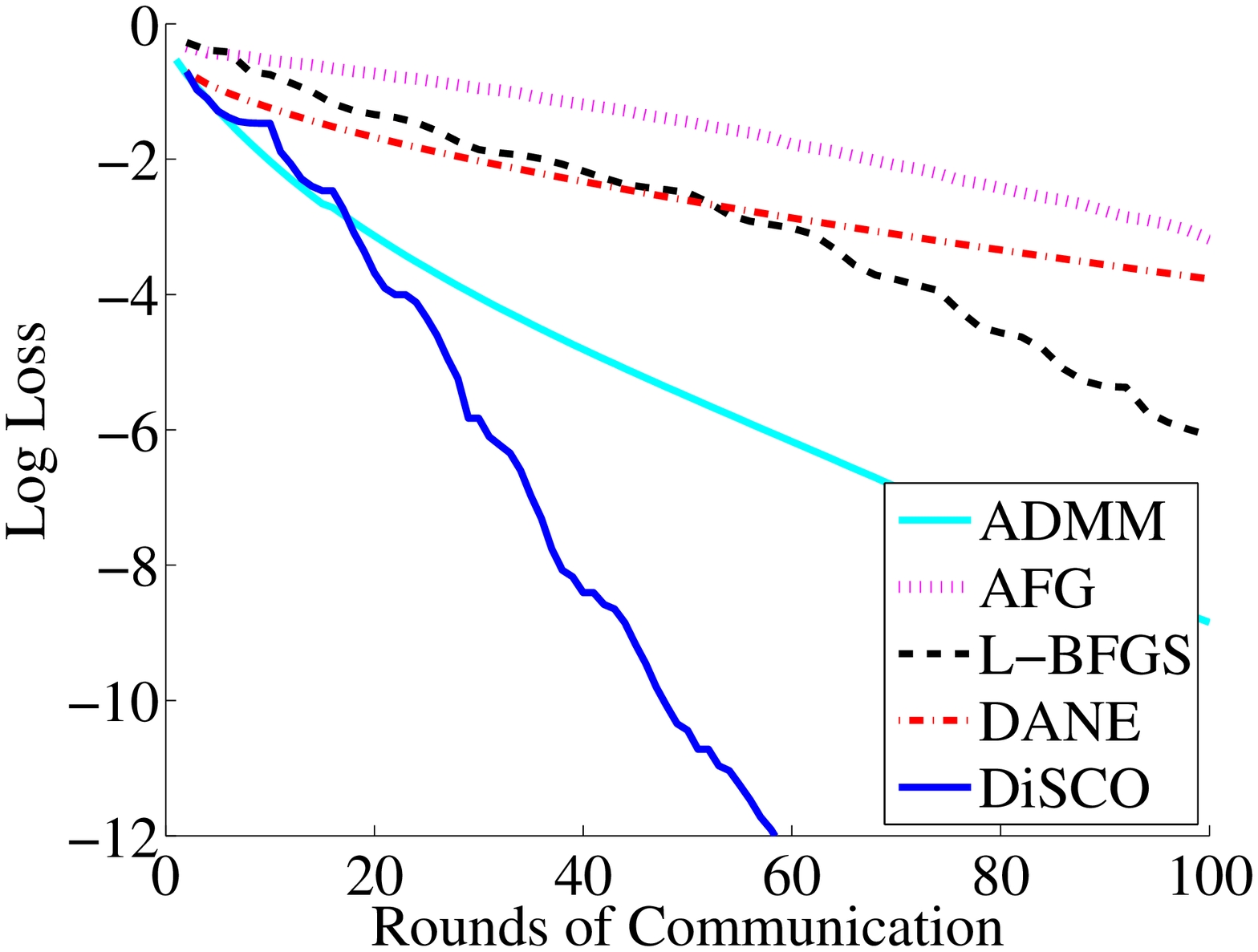}}
\end{tabular}
\caption{Comparing DiSCO with other distributed optimization algorithms. 
    We splits each dataset evenly to~$m$ machines, with $m \in \{4,16,64\}$. 
    Each plot above shows the reduction of the logarithmic gap 
    $\log_{10}(\ell(\what) - \ell(\wstar))$ (the vertical axis) versus
    the number of communication rounds (the horizontal axis)
taken by each algorithm.}
\label{fig:real-compare}
\end{figure}

For comparison, 
we solve three binary classification tasks using logistic regression.
The datasets are obtained from the LIBSVM datasets~\cite{chang2011libsvm}
and summarized in Table~\ref{tab:data-summary}. 
These datasets are selected to cover different relations between 
the sample size $N=mn$ and the feature dimensionality~$d$:
$N \gg d$ (Covtype \cite{covertype}), 
$N \approx d$ (RCV1 \cite{RCV1}) and 
$N \ll d$ (News20 \cite{News20binary,Lang95News20}).
For each dataset, our goal is to minimize the
regularized empirical loss function:
\begin{align*}
	\ell(w) = \frac{1}{N} \sum_{i=1}^N \log(1 + \exp( - y_i ( w^T x_i ))) + \frac{\gamma}{2}\ltwos{w}^2
\end{align*}
where $x_i\in\R^d$ and $y_i\in\{-1, 1\}$. 
The data have been normalized so that $\|x_i\|=1$ for all $i=1,\ldots,N$.
The regularization parameter is set to be $\gamma = 10^{-5}$.

We describe some implementation details.
In Section~\ref{sec:classification}, the theoretical analysis suggests that 
we scale the function $\ell(w)$ by a factor $\eta = {B^2}/({4\gamma})$.
Here we have $B=1$ due to the normalization of the data.
In practice, we find that DiSCO converges faster without rescaling.
Thus, we use $\eta = 1$ for all experiments.
For Algorithm~\ref{alg:DiSCO}, we choose the input parameters 
$\mu = m^{1/2}\mu_0$, where $\mu_0$ is chosen manually.
In particular, we used $\mu_0 = 0$ for Covtype,
$\mu_0 = 4\times 10^{-4}$ for RCV1, and $\mu_0 = 2\times 10^{-4}$  for News20.
For the distributed PCG method (Algorithm~\ref{alg:distributed-pcg}), 
we choose the stopping precision $\epsilon_k = \ltwos{\fp(w_k)}/10$.

Among other methods in comparison, 
we manually tune the penalty parameter of ADMM
and the regularization parameter~$\mu$ for DANE to optimize their performance.
For AFG, we used an adaptive line search scheme 
\cite{Nesterov13composite,LinXiao14acclprox} to speed up its convergence.
For L-BFGS, we adopted the memory size $30$ 
(number of most recent iterates and gradients stored)
as a general rule of thumb suggested in \cite{NocedalWrightbook},

We want to evaluate DiSCO not only on $w_k$, but also in the middle of
calculating $v_k$, to show its progress after each round of communication.
To this end, we follow equation~\eqref{eqn:update-wk-before-rescaling}
to define an intermediate solution $\what_k^t$ for each 
iteration $t$ of the distributed PCG method 
(Algorithm~\ref{alg:distributed-pcg}):
\[
    \what_k^{t} = w_k - \frac{v^{(t)}}{1 + \sqrt{\eta}\bigl(v^{(t)})^T \ell''(w_k) v^{(t)}\bigr)^{1/2}},
\]
and evaluate the associated objective function $\ell(\what_k^{t})$. 
This function value is treated as a measure of progress after each round of 
communication. 

\subsection{Performance evaluation}

\begin{figure}
\centering
\begin{tabular}{ccc}
\includegraphics[width = 0.3\textwidth]{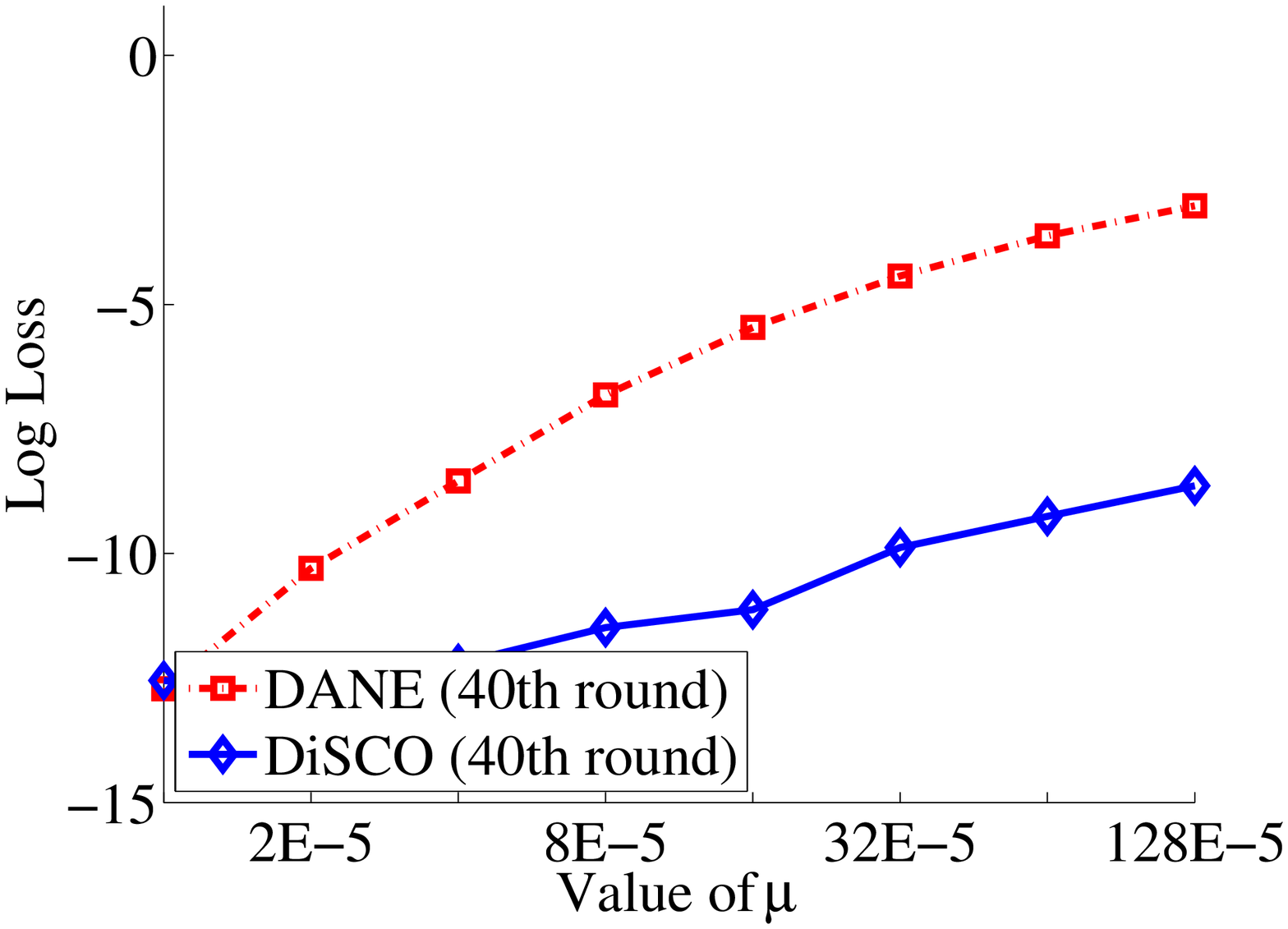} &
\includegraphics[width = 0.3\textwidth]{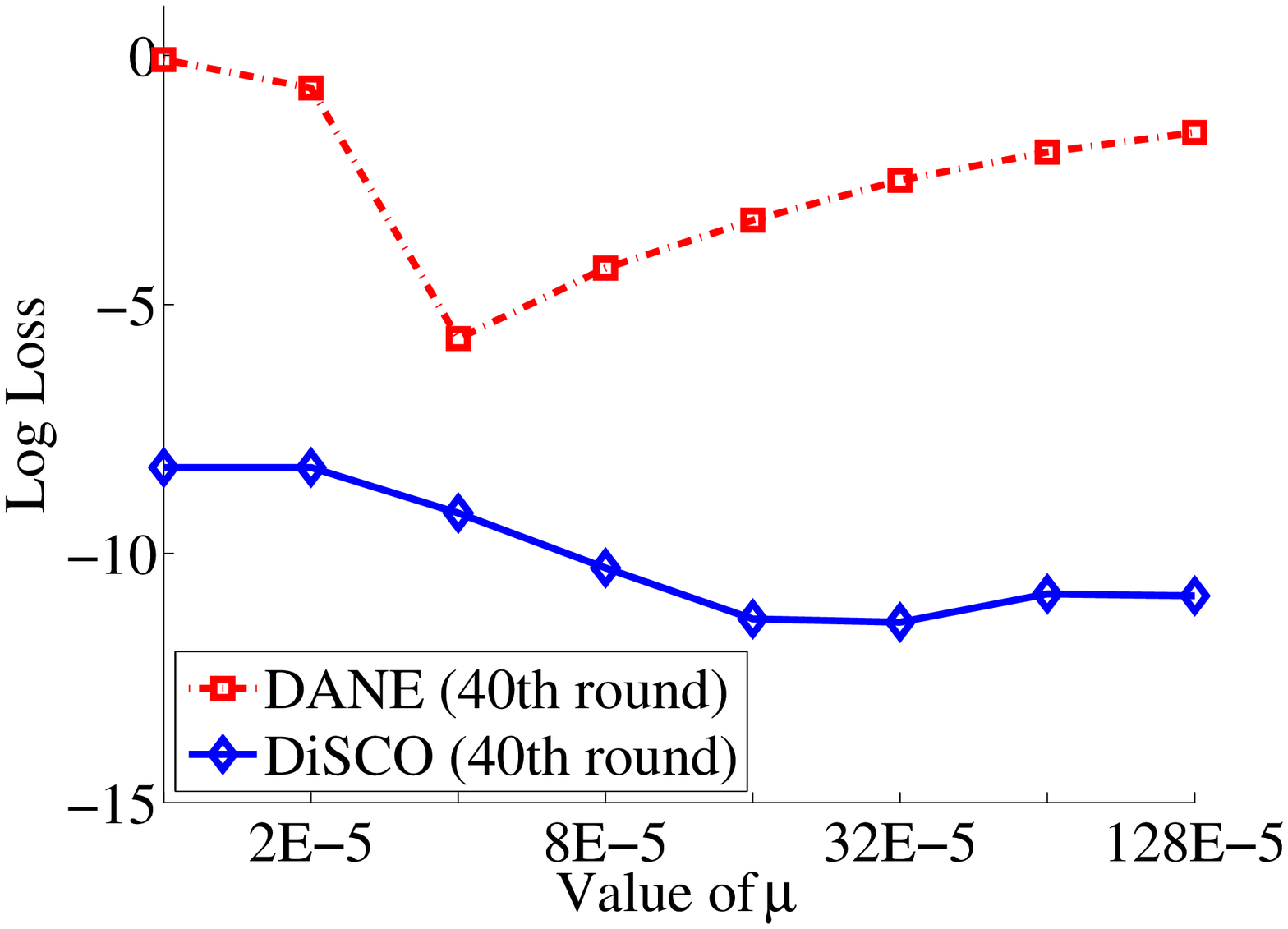} &
\includegraphics[width = 0.3\textwidth]{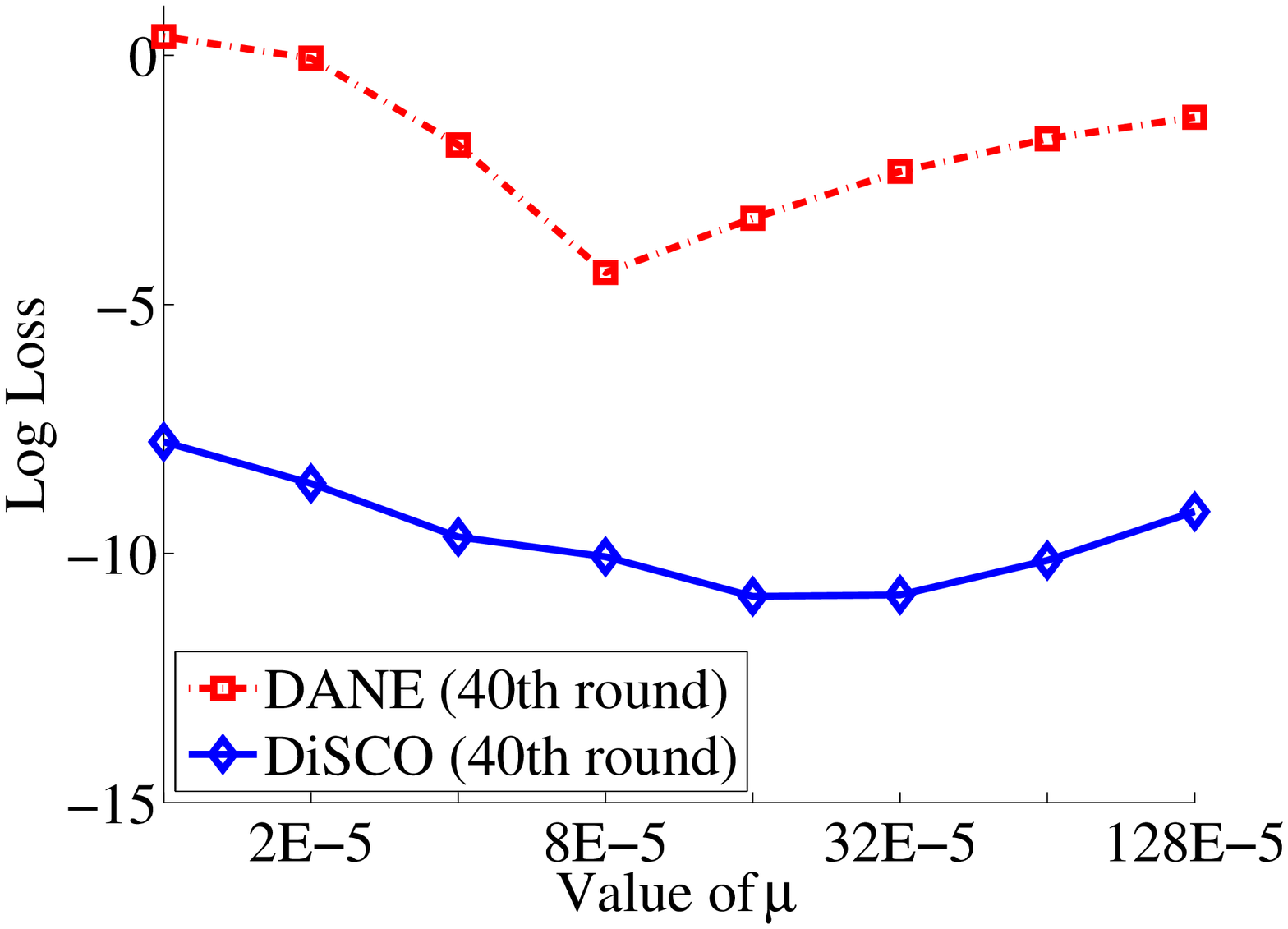}\\
Covtype & RCV1 & News20
\end{tabular}
\caption{Comparing the sensitivity of DiSCO and DANE with respect to the
    regularization parameter~$\mu$, when the datasets are split on
    $m=16$ machines.
    We varied~$\mu$ from $10^{-5}$ to $128\times 10^{-5}$. 
The vertical axis is the logarithmic gap $\log_{10}(\ell(\what)-\ell(\wstar))$
after~$40$ rounds of communications.}
\label{fig:sensitivity-compare}
\end{figure}

It is important to note that different algorithms take different number of
communication rounds per iteration.
ADMM requires one round of communication per iteration.
For AFG and L-BFGS, each iteration consists of at least two rounds of communications: one for finding the descent direction,
and another one or more for searching the stepsize. For DANE, there are also two rounds of communications
per iteration, for computing the gradient and for aggregating the local solutions. For DiSCO, 
each iteration in the inner loop takes one round of communication, and there is an additional round
of communication at the beginning of each inner loop. Since we are interested in the communication efficiency
of the algorithms, we plot their progress in reducing the objective value
with respect to the number of communication rounds taken. 

We plot the performance of ADMM, AFG, L-BFGS, DANE and DiSCO in
Figure~\ref{fig:real-compare}. According to the plots, DiSCO 
converges substantially faster than ADMM and AFG. 
It is also notably faster than L-BFGS and DANE. 
In particular, the convergence speed (and the communication efficiency) of 
DiSCO is more robust to the number of machines in the distributed system. 
For $m=4$, the performance of DiSCO is somewhat comparable to that of DANE. 
As~$m$ grows to~$16$ and~$64$, the convergence of DANE becomes significantly 
slower, while the performance of DiSCO only degrades slightly.
This coincides with the theoretical analysis: 
the iteration complexity of DANE is proportional to~$m$, 
but the iteration complexity of DiSCO is proportional to $m^{1/4}$.

Since both DANE and DiSCO take a regularization parameter~$\mu$, 
we study their sensitivity to the choices of this parameter. 
Figure~\ref{fig:sensitivity-compare} shows the performance of DANE and DiSCO 
with the value of~$\mu$ varying from $10^{-5}$ to $128\times 10^{-5}$. 
We observe that the curves of DiSCO are relatively smooth and stable. 
In contrast, the curves of DANE exhibit sharp valley at particular 
values of~$\mu$.
This suggests that DiSCO is more robust to the non-optimal choice of parameters. 

\section{Extension to distributed composite minimization}
\label{sec:composite-minimization}

Thus far, we have studied the problem of minimizing empirical loss functions
that are standard self-concordant.
In this section, we sketch how to extend the DiSCO algorithm to solve 
distributed composite minimization problems. 
By composite minimization, 
we consider the minimization of
\begin{align}\label{eqn:composite-obj}
	F(w) \eqdef f(w) + \Psi(w),
\end{align}
where~$f$ is a standard self-concordant function taking the 
form of~\eqref{eqn:average-obj},
and~$\Psi$ a closed convex function 
with a simple structure (see discussions in \cite{Nesterov13composite}).
For solving the Lasso~\cite{tibshirani1996regression}, for example, 
the $\ell_1$-penalty $\Psi(w)=\sigma\|w\|_1$ with $\sigma>0$ 
is nonsmooth but admits a simple proximal mapping.

We modify Algorithm~\ref{alg:damped-newton-method} and 
Algorithm~\ref{alg:distributed-pcg} to minimize the composite function $F(w)$.
To modify Algorithm~\ref{alg:damped-newton-method}, 
we update $w_{k+1}$ using an inexact version of the proximal-Newton 
method (\eg, \cite{LeeSunSaunders14proxNewton,tran2013composite}).
More specifically, the two steps in each iteration of 
Algorithm~\ref{alg:damped-newton-method} are replaced with:
\begin{enumerate}
	\item Find a vector $v_k$ that is an approximate solution of
        \begin{equation}\label{eqn:prox-Newton-min}
        \minimize_{v\in\R^d} \quad \left \{\frac{1}{2}v^T \fpp(w_k)v 
        - v^T \fp(w_k) + \Psi(w_k-v)\right\}.
    \end{equation}
    \item Update $w_{k+1} = w_k - \frac{v_k}{1 + \sqrt{v^T \fpp(w_k) v_k}}$.
\end{enumerate}
Note that for $\Psi(w)\equiv 0$, the above proximal-Newton method reduces to 
Algorithm~\ref{alg:damped-newton-method}.
Since~$v_k$ only needs to be an inexact solution to 
problem~\eqref{eqn:prox-Newton-min}, 
we need a measure to quantify the approximation error.
For this purpose, we define the following gradient mapping
\[
	g(v_k) \eqdef \arg\min_{g\in \R^d} \Big\{\frac{L}{2}\ltwos{g}^2 
+ \langle \fpp(w_k)v_k - \fp(w_k), g \rangle + \Psi(w_k-v_k+g) \Big\}.
\]
If $v_k$ is an exact minimizer of~\eqref{eqn:prox-Newton-min}, 
then we have $\ltwos{g(v_k)} = 0$.
In the distributed setting, we only need to find a vector 
$v_k$ such that $\ltwos{g(v_k)}\leq\epsilon_k$.

It remains to devise an distributed algorithm to compute an inexact 
minimizer~$v_k$.
Since the objective function in~\eqref{eqn:prox-Newton-min} is not quadratic, 
we can no longer employ the distributed PCG method in 
Algorithm~\ref{alg:distributed-pcg}.
Instead, we propose a preconditioned accelerated proximal gradient method.
In particular, we modify the algorithm on the master machine
in Algorithm~\ref{alg:distributed-pcg} as follows:
\begin{align}
    v^\suptp & = \arg\min_{v\in\R^d} ~ \Big\{\frac{1}{2}(v - s^\supt)^T [\fpp_1(w_k) + \mu I] (v - s^\supt) \nonumber\\
	&\qquad\qquad\qquad  + \langle \fpp(w_k)s^\supt - \fp(w_k) , v - s^\supt\rangle + \Psi(w_k + v) \Big\},\label{eqn:accel-proximal-gradient}\\
	s^\suptp & = v^\suptp + \frac{\sqrt{1 + 2\mu/\reg} - 1}{\sqrt{1 + 2\mu/\reg} + 1} (v^\suptp - v^\supt),\nonumber
\end{align}
where $s^{(t+1)}$ is an auxiliary vector.
We output $v_k=v^\suptp$ once the condition $\ltwos{g(v^\suptp)}\leq\epsilon_k$
is satisfied.
Each update takes one round of communication to compute the vector 
$\fpp(w_k)s^\supt$. 
Then, the sub-problem~\eqref{eqn:accel-proximal-gradient} is locally solved by
the master machine.
This problem has similar structure as problems~\eqref{eqn:local-solution}
and~\eqref{eqn:computation-linear-system}, and can be solved in
$\order\bigl((n+\frac{L+\mu}{\lambda+\mu})\log(1/\epsilon)\bigr)$ time
using the methods proposed in 
\cite{SSZhang13SDCA,XiaoZhang14ProxSVRG,DefazioBach14SAGA}.

If we replace the first term on the right-hand side of 
equation~\eqref{eqn:accel-proximal-gradient}
by $\frac{L}{2}\ltwos{v - v^\supt}^2$ and set $\mu = \hessianbound$, 
then the above algorithm is exactly the accelerated proximal 
gradient algorithm~\cite{Nesterov13composite,LinXiao14acclprox}, 
which converges in $\widetilde \order(\sqrt{\hessianbound/\reg})$ iterations. 
By utilizing the similarity between $\fpp_1(w_k)$ and $\fpp(w_k)$, 
and assuming $\ltwos{\fpp_1(w_k) - \fpp(w_k)}\leq \mu$ for all $k\geq 0$,
it can be shown that our algorithm in~\eqref{eqn:accel-proximal-gradient}
converges in $\widetilde \order(1+\sqrt{\mu/\lambda})$ iterations, 
which is of the same order as the PCG algorithm.

In summary, to minimize the composite function $f(w)+\Psi(w)$,
we replace Algorithm~\ref{alg:damped-newton-method} 
by the inexact proximal Newton method, and replace
Algorithm~\ref{alg:distributed-pcg} by a distributed implementation
of the above preconditioned accelerated proximal gradient method.
Under the same assumptions on~$f$, we can obtain similar guarantees
on the communication efficiency as stated in
Theorems~\ref{thm:DiSCO-stochastic} and~\ref{thm:ada-DiSCO-stochastic}.

\section{Conclusions}\label{sec:conclusion}

We considered distributed convex optimization problems originated from
SAA or ERM,
which involve large amount of i.i.d.\ data stored on a distributed computing
system.
Since the cost of inter-machine communication is very high in practice,
communication efficiency is a critical measure in evaluating the performance
of a distributed algorithm.
For algorithms based on first-order methods,
including accelerated gradient methods and ADMM, 
the required number of communication rounds grows with the condition number
of the objective function. 
The condition number itself often grows with 
the number of samples due to weaker regularization required.
This causes the total number of communication rounds to grow with
the overall sample size.

In this paper, 
we proposed and analyzed DiSCO, a communication-efficient distributed algorithm
for minimizing self-concordant empirical loss functions, 
and discussed its application to linear regression and classification.
DiSCO is based on an inexact damped Newton method, where the inexact Newton 
steps are computed by a distributed preconditioned conjugate gradient method.
In a standard setting for supervised learning, its required number of
communication rounds does not increase with the sample size,
but only grows slowly with the number of machines in the distributed system.
There are three main thrusts in our approach:
\begin{itemize}
    \item \emph{Self-concordant analysis.} 
        We showed that several popular empirical loss functions used in 
        machine learning are either self-concordant or can be well approximated
        by self-concordant functions.
        We gave complexity analysis of the inexact damped Newton method,
        and characterized the conditions for both linear and superlinear 
        convergence.
       
    \item \emph{Preconditioned conjugate gradient (PCG) method.}
        We proposed a distributed implementation of the PCG method for computing
        the inexact Newton step. In particular, the preconditioner based on
        similarity between local and global Hessians is very effective in 
        reducing the number of communication rounds, 
        both in theory and practice.

    \item \emph{Stochastic analysis of communication efficiency.}
        Our main theoretical results combine two consequences of averaging over
        a large number of i.i.d.\ samples. 
        One is the expected reduction of the initial objective value, 
        which counters the effect of objective scaling required to make
        the objective function standard self-concordant.
        The other is a high-probability bound that characterizes the similarity
        between the local and global Hessians.
\end{itemize}
Our numerical experiments on real datasets confirmed the superior communication
efficiency of the DiSCO algorithm.
In addition, we also proposed an extension for solving distributed optimization
problems with composite empirical loss functions.


\newpage
\appendix

\centerline{\huge Appendices}

\section{Proof of Theorem~\ref{thm:damped-newton-convergence}}
\label{sec:proof-newton-method-convergence-rate}

First, we recall the definitions of the two auxiliary functions
\begin{align*}
    \omega(t)   & = t - \log(1+t), \qquad  t\geq 0, \\
    \omega_*(t) & = -t - \log(1-t), \qquad 0\leq t < 1,
\end{align*}
which form a pair of convex conjugate functions.

%
We notice that 
Step~2 of Algorithm~\ref{alg:damped-newton-method} is equivalent to
\[
    w_{k+1} - w_k = \frac{v_k}{1+\delta_k} = \frac{v_k}{1+\ltwos{\vtilde_k}},
\]
which implies
\begin{align}\label{eqn:step-size-not-too-large}
	\ltwos{[\fpp(w_k)]^{1/2}(w_{k+1} - w_k)} = \frac{\ltwos{\vtilde_k}}{1 + \ltwos{\vtilde_k}} < 1.
\end{align}
When inequality~\eqref{eqn:step-size-not-too-large} holds, 
Nesterov~\cite[Theorem~4.1.8]{Nesterov04book} has shown that
\begin{align*}
	f(w_{k+1}) \leq f(w_k) + \langle \fp(w_k), w_{k+1} - w_k \rangle 
    + \omega_*\bigl(\ltwos{[\fpp(w_k)]^{1/2}(w_{k+1} - w_k)}\bigr) .
\end{align*}
Using the definition of functions $\omega$ and $\omega_*$, and with some algebraic operations, we obtain
\begin{align}\label{eqn:copy-nesterov-4-1-12}
 f(w_{k+1}) &\leq f(w_k) - \frac{\langle \utilde_k, \vtilde_k \rangle}{1 + \ltwos{\vtilde_k}} - \frac{\ltwos{\vtilde_k}}{1 + \ltwos{\vtilde_k}}
 + \log(1 + \ltwos{\vtilde_k} ) \nonumber\\
&= f(w_k) - \omega(\ltwos{\utilde_k}) + \big( \omega(\ltwos{\utilde_k}) - \omega(\ltwos{\vtilde_k})\big) + \frac{\langle \vtilde_k - \utilde_k, \vtilde_k \rangle}{1 + \ltwos{\vtilde_k}}.
\end{align}

By the second-order mean-value theorem, we have
\[
    \omega(\ltwos{\utilde_k}) - \omega(\ltwos{\vtilde_k}) 
    = \omega'(\ltwos{\vtilde_k})(\ltwos{\utilde_k}-\ltwos{\vtilde_k})
    +\frac{1}{2}\omega''(t)\left(\ltwos{\utilde_k}-\ltwos{\vtilde_k}\right)^2
\]
for some~$t$ satisfying
\[
    \min\{\ltwos{\utilde_k},\ltwos{\vtilde_k}\}
    \leq t \leq
    \max\{\ltwos{\utilde_k},\ltwos{\vtilde_k}\} .
\]
Using the inequality~\eqref{eqn:bound-vk-by-uk}, 
we can upper bound the second derivative $\omega''(t)$ as
\[
    \omega''(t) = \frac{1}{(1+t)^2} \leq \frac{1}{1+t} 
    \leq \frac{1}{1+\min\{\ltwos{\utilde_k},\ltwos{\vtilde_k}\} }
    \leq \frac{1}{1+(1-\beta)\ltwos{\utilde_k}}.
\]
Therefore, 
\begin{align*}
    \omega(\ltwos{\utilde_k}) - \omega(\ltwos{\vtilde_k}) 
&=\frac{(\ltwos{\utilde_k}-\ltwos{\vtilde_k})\ltwos{\vtilde_k}}{1+\ltwos{\vtilde_k}} +\frac{1}{2}\omega''(t)\left(\ltwos{\utilde_k}-\ltwos{\vtilde_k}\right)^2\\
&\leq \frac{\ltwos{\utilde_k-\vtilde_k} \ltwos{\vtilde_k}}{1+(1-\beta)\ltwos{\utilde_k}} + \frac{(1/2)\ltwos{\utilde_k-\vtilde_k}^2}{1+(1-\beta)\ltwos{\utilde_k}}\\
&\leq \frac{\beta(1+\beta)\ltwos{\utilde_k}^2+(1/2)\beta^2\ltwos{\utilde_k}^2}{1+(1-\beta)\ltwos{\utilde_k}}
\end{align*}
In addition, we have
\[
\frac{\langle \vtilde_k - \utilde_k, \vtilde_k \rangle}{1 + \ltwos{\vtilde_k}}
\leq \frac{\ltwos{\utilde_k-\vtilde_k} \ltwos{\vtilde_k}}{1+\ltwos{\vtilde_k}}
\leq \frac{\beta(1+\beta)\ltwos{\utilde_k}^2}{1+(1-\beta)\ltwos{\utilde_k}}.
\]
Combining the two inequalities above, and using the relation
$t^2/(1+t)\leq 2 \omega(t)$ for all $t\geq 0$, we obtain
\begin{align*}
    \omega(\ltwos{\utilde_k}) - \omega(\ltwos{\vtilde_k}) 
+\frac{\langle \vtilde_k - \utilde_k, \vtilde_k \rangle}{1 + \ltwos{\vtilde_k}}
&\leq \left(2\beta(1+\beta)+(1/2)\beta^2\right) \frac{\ltwos{\utilde_k}^2}{1+(1-\beta)\ltwos{\utilde_k}} \\
&= \left(\frac{2\beta+(5/2)\beta^2}{(1-\beta)^2}\right) \frac{(1-\beta)^2\ltwos{\utilde_k}^2}{1+(1-\beta)\ltwos{\utilde_k}} \\
&\leq \left(\frac{2\beta+(5/2)\beta^2}{(1-\beta)^2}\right) 2 \omega\bigl( (1-\beta)\ltwos{\utilde_k} \bigr) \\
&\leq \left(\frac{4\beta+5\beta^2}{1-\beta}\right) \omega\bigl( \ltwos{\utilde_k} \bigr) .
\end{align*}
In the last inequality above, we used the fact that for any $t\geq 0$ we have
$\omega((1-\beta)t) \leq (1-\beta)\omega(t)$, which is the result of 
convexity of $\omega(t)$ and $\omega(0)=0$; more specifically,
\[
    \omega((1-\beta)t) = \omega(\beta\cdot 0 + (1-\beta) t)
    \leq \beta\omega(0) + (1-\beta)\omega(t)
    = (1-\beta)\omega(t).
\]
Substituting the above upper bound into 
inequality~\eqref{eqn:copy-nesterov-4-1-12} yields
\begin{align}\label{eqn:newton-contraction-basic-inequality}
f(w_{k+1}) \leq f(w_k) - 
\left(1 - \frac{4\beta+5\beta^2}{1-\beta} \right) \omega(\ltwos{\utilde_k}).
\end{align}

With inequality~\eqref{eqn:newton-contraction-basic-inequality}, we are ready to prove the statements of the lemma.
In particular, Part~(a) of the Lemma holds for any $0\leq\beta\leq 1/10$. 

For part~(b), we assume that $\ltwos{\utilde_k} \leq 1/6$. 
According to~\cite[Theorem~4.1.13]{Nesterov04book},
when $\ltwos{\utilde_k} < 1$, it holds that for every $k\geq 0$,
\begin{align}\label{eqn:bound-regret-using-remainder}
	\omega(\ltwos{\utilde_k}) \leq f(w_k) - f(\wstar) \leq \omega_*(\ltwos{\utilde_k}) .
\end{align}
Combining this sandwich inequality with inequality~\eqref{eqn:newton-contraction-basic-inequality}, we have
\begin{align}
\omega(\ltwos{\utilde_{k+1}}) 
&\leq f(w_{k+1}) - f(\wstar) \nonumber \\
&\leq f(w_k) - f(\wstar) - \omega(\ltwos{\utilde_k})  + \frac{4\beta+5\beta^2}{1-\beta} \omega(\ltwos{\utilde_k})\nonumber\\
&\leq \omega_*(\ltwos{\utilde_k}) - \omega(\ltwos{\utilde_k}) + \frac{4\beta+5\beta^2}{1-\beta}\omega(\ltwos{\utilde_k}) .
    \label{eqn:newton-contraction-in-quadratic-zone}
\end{align}
It is easy to verify that $\omega_*(t) - \omega(t) \leq 0.26\,\omega(t)$ for all $t \leq 1/6$, and $(4\beta+5\beta^2)/(1-\beta) \leq 0.23$ if $\beta\leq 1/20$.
Applying these two inequalities to inequality~\eqref{eqn:newton-contraction-in-quadratic-zone} completes the proof.

It should be clear that other combinations of the value of~$\beta$ and bound 
on $\ltwos{\utilde_k}$ are also possible. 
For example, for $\beta=1/10$ and $\ltwos{\utilde_k}\leq 1/10$, we have
$\omega(\ltwos{\utilde_{k+1}})\leq 0.65\, \omega(\ltwos{\utilde_k})$.


\section{Super-linear convergence of Algorithm~\ref{alg:damped-newton-method}} 
\label{sec:superlinear-convergence}

\begin{theorem}\label{thm:superlinear-convergence}
    Suppose $f:\R^d\to\R$ is a standard self-concordant function 
    and Assumption~\ref{asmp:Hessian-bounds} holds. 
    If we choose the sequence $\{\epsilon_k\}_{k\geq 0}$ in
Algorithm~\ref{alg:damped-newton-method} as
\begin{align}\label{eqn:epsilon-k-superlinear}
    \epsilon_k =  \frac{\lambda^{1/2}}{2} \min\left\{ \frac{\omega(r_k)}{2},
    \frac{\omega^{3/2}(r_k)}{10} \right\},
    \qquad \mbox{where}\quad r_k =  L^{-1/2} \|f'(w_k)\|_2 ,
\end{align}
then:
\begin{enumerate}
    \item[(a)] For any $k\geq 0$, we have $f(w_{k+1}) \leq f(w_k) - \frac{1}{2}\omega(\ltwos{\utilde_k})$.
    \item[(b)] If $\ltwos{\utilde_k} \leq 1/8$, then we have 
$\omega(\ltwos{\utilde_{k+1}})\leq\sqrt{6}\,\omega^{3/2}(\ltwos{\utilde_{k}})$.
\end{enumerate}
\end{theorem}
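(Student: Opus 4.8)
The plan is to follow the template of the proof of Theorem~\ref{thm:damped-newton-convergence} in Appendix~\ref{sec:proof-newton-method-convergence-rate}, reusing the master inequality~\eqref{eqn:copy-nesterov-4-1-12} and the elementary estimate $\eta_k \eqdef \ltwos{\vtilde_k-\utilde_k} \leq \reg^{-1/2}\epsilon_k$. The only new feature is that the tolerance~\eqref{eqn:epsilon-k-superlinear} now scales with $\omega(r_k)$ rather than with $\ltwos{\fp(w_k)}$. The first observation is that $\fpp(w_k)\preceq \hessianbound I$ implies $\ltwos{\utilde_k} = \sqrt{\fp(w_k)^T[\fpp(w_k)]^{-1}\fp(w_k)} \geq \hessianbound^{-1/2}\ltwos{\fp(w_k)} = r_k$, so that $\eta_k \leq \tfrac12\min\{\tfrac12\omega(r_k),\ \tfrac1{10}\omega^{3/2}(r_k)\}$; in particular $\eta_k \leq \tfrac14\omega(r_k) \leq \tfrac14\omega(\ltwos{\utilde_k})$ since $\omega$ is increasing.

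For part~(a), I would control the error term $E_k \eqdef \bigl(\omega(\ltwos{\utilde_k})-\omega(\ltwos{\vtilde_k})\bigr) + \frac{\langle \vtilde_k-\utilde_k,\vtilde_k\rangle}{1+\ltwos{\vtilde_k}}$ appearing in~\eqref{eqn:copy-nesterov-4-1-12}. Since $\omega$ is $1$-Lipschitz on $[0,\infty)$ (its derivative $\omega'(t)=t/(1+t)$ lies in $[0,1)$) and $\bigl|\ltwos{\utilde_k}-\ltwos{\vtilde_k}\bigr|\leq\eta_k$, the first summand is at most $\eta_k$; using $\ltwos{\vtilde_k}/(1+\ltwos{\vtilde_k})<1$, the inner-product term is also at most $\eta_k$. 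Hence $E_k \leq 2\eta_k \leq \tfrac12\omega(\ltwos{\utilde_k})$, and~\eqref{eqn:copy-nesterov-4-1-12} gives $f(w_{k+1})\leq f(w_k)-\tfrac12\omega(\ltwos{\utilde_k})$ for every $k\geq 0$, with no restriction on $\ltwos{\utilde_k}$. This is in fact simpler than in Theorem~\ref{thm:damped-newton-convergence}: there $\eta_k$ was only $O(\beta\ltwos{\utilde_k})$, forcing the second-order Taylor argument, whereas here $\eta_k=O(\omega(\ltwos{\utilde_k}))$ is already small relative to $\omega(\ltwos{\utilde_k})$.

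For part~(b), assume $\ltwos{\utilde_k}\leq 1/8$. Then $r_k\leq 1/8$, so $\omega(r_k)$ is tiny and the minimum in~\eqref{eqn:epsilon-k-superlinear} is attained at the second branch; thus $\eta_k \leq \tfrac1{20}\omega^{3/2}(r_k)$ and, by the same Lipschitz estimates, $E_k \leq 2\eta_k \leq \tfrac1{10}\omega^{3/2}(r_k)\leq \tfrac1{10}\omega^{3/2}(\ltwos{\utilde_k})$. Combining~\eqref{eqn:copy-nesterov-4-1-12} with the sandwich inequality~\eqref{eqn:opt-gap-bounds} exactly as in the derivation of~\eqref{eqn:newton-contraction-in-quadratic-zone}, I obtain
\begin{align*}
\omega(\ltwos{\utilde_{k+1}}) &\leq f(w_{k+1})-f(\wstar) \leq \omega_*(\ltwos{\utilde_k}) - \omega(\ltwos{\utilde_k}) + E_k \\
&\leq \bigl(\omega_*(\ltwos{\utilde_k})-\omega(\ltwos{\utilde_k})\bigr) + \tfrac1{10}\,\omega^{3/2}(\ltwos{\utilde_k}).
\end{align*}
It then remains to establish the scalar inequality $\omega_*(t)-\omega(t)\leq c\,\omega^{3/2}(t)$ for all $0\leq t\leq 1/8$, with a constant $c$ small enough that $c+\tfrac1{10}\leq\sqrt6$; since $\omega_*(t)-\omega(t)=\log\frac{1+t}{1-t}-2t=\tfrac23 t^3+O(t^5)$ and $\omega(t)=\tfrac12 t^2+O(t^3)$, the ratio $(\omega_*(t)-\omega(t))/\omega^{3/2}(t)$ tends to $4\sqrt2/3\approx 1.89$ as $t\to 0$ and stays below roughly $2.2$ on $(0,1/8]$, which suffices. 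This yields $\omega(\ltwos{\utilde_{k+1}})\leq\sqrt6\,\omega^{3/2}(\ltwos{\utilde_k})$.

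The main obstacle is precisely that last purely scalar step: verifying that $(\omega_*(t)-\omega(t))/\omega^{3/2}(t)$ stays below the required threshold on all of $(0,1/8]$ — the ratio is not obviously monotone, so one must either differentiate it or bound $\omega_*-\omega$ from above and $\omega$ from below by explicit power-series tails, and this is where the specific numerical choices ($1/8$, the $\tfrac1{10}$ in~\eqref{eqn:epsilon-k-superlinear}, and the target $\sqrt6$) have to be matched. Everything else is a direct transcription of the Appendix~\ref{sec:proof-newton-method-convergence-rate} argument, with the relative error bound $\beta\ltwos{\utilde_k}$ replaced by $\tfrac14\omega(\ltwos{\utilde_k})$ for part~(a) and by $\tfrac1{10}\omega^{3/2}(\ltwos{\utilde_k})$ for part~(b).
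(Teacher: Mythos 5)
Your proposal is correct and follows essentially the same route as the paper's proof: the master inequality~\eqref{eqn:copy-nesterov-4-1-12}, the Lipschitz bound $\omega'<1$ giving $E_k\leq 2\ltwos{\utilde_k-\vtilde_k}\leq 2\lambda^{-1/2}\epsilon_k$, the observation $r_k\leq\ltwos{\utilde_k}$, and then the sandwich inequality plus the scalar estimate $\omega_*(t)-\omega(t)\leq(\sqrt6-\tfrac1{10})\,\omega^{3/2}(t)$ on $[0,1/8]$. The one step you leave as an acknowledged obstacle is closed in the paper exactly by the second route you describe: $h'(t)=\frac{2t^2}{1-t^2}<\frac{128}{63}t^2$ gives $\omega_*(t)-\omega(t)\leq\frac{128}{189}t^3$, while $\omega'(t)\geq\frac{8}{9}t$ gives $\omega(t)\geq\frac{4}{9}t^2$, yielding the ratio bound $\frac{128}{189}\bigl(\frac{9}{4}\bigr)^{3/2}=\frac{16}{7}\approx 2.29\leq\sqrt6-\frac1{10}$, consistent with your numerical estimates.
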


Part~(b) suggests superlinear convergence when $\ltwos{\utilde_k}$ is small.
This comes at the cost of a smaller approximation tolerance~$\epsilon_k$ 
given in~\eqref{eqn:epsilon-k-superlinear}, 
compared with~\eqref{eqn:choose-epsilon-k}.
Roughly speaking, when $\ltwos{f'(w_k)}$ is relative large,
the tolerance $\epsilon_k$ in~\eqref{eqn:epsilon-k-superlinear} 
needs to be proportional to $\ltwos{f'(w_k)}^{3/2}$ since $\omega(t)=\order(t)$.
When $\ltwos{f'(w_k)}$ is very small, 
the tolerance $\epsilon_k$ in~\eqref{eqn:epsilon-k-superlinear} 
needs to be proportional to $\ltwos{f'(w_k)}^3$ because $\omega(t)\sim t^2$ as
$t\to 0$.
In contrast, for linear convergence, the tolerance 
in~\eqref{eqn:choose-epsilon-k} is proportional to $\ltwos{f'(w_k)}$.

\begin{proof}
We start with the inequality~\eqref{eqn:copy-nesterov-4-1-12},
and upper bound the last two terms on its right-hand side.
Since $\omega'(t)=\frac{t}{1+t} < 1$, we have 
\[
    \omega(\ltwos{\utilde_k})-\omega(\ltwos{\vtilde_k})
\leq \bigl| \ltwos{\utilde_k}-\ltwos{\vtilde_k}\bigr|
\leq \ltwos{\utilde_k-\vtilde_k} .
\]
In addition, we have
\[
    \frac{\langle \vtilde_k-\utilde_k, \vtilde_k\rangle}{1+\ltwos{\vtilde_k}}
\leq \frac{\ltwos{\vtilde_k}}{1+\ltwos{\vtilde_k}} \ltwos{\utilde_k-\vtilde_k}
\leq \ltwos{\utilde_k-\vtilde_k} .
\]
Applying these two bounds to~\eqref{eqn:copy-nesterov-4-1-12}, we obtain
\begin{equation}\label{eqn:bound-by-uv-norm}
    f(w_{k+1}) \leq f(w_k) - \omega(\ltwos{\utilde_k}) + 2\ltwos{\utilde_k-\vtilde_k} .
\end{equation}
Next we bound $\ltwos{\utilde_k-\vtilde_k}$ using the approximation 
tolerance~$\epsilon_k$ specified in~\eqref{eqn:epsilon-k-superlinear},
\begin{align*}
    \ltwos{\utilde_k-\vtilde_k} 
& = \left\| [f''(w_k)]^{-1/2}f'(w_k) - [f''(w_k)]^{1/2} v_k \right\|_2 \\
& = \left\| [f''(w_k)]^{-1/2} \bigl( f''(w_k) v_k - f'(w_k) \bigr)\right\|_2\\
&\leq \lambda^{-1/2} \left\| f''(w_k) v_k - f'(w_k) \right\|_2\\
&\leq \lambda^{-1/2} \epsilon_k \\
&= \frac{1}{2} \min\left\{ \frac{\omega(r_k)}{2}, \frac{\omega^{3/2}(r_k)}{10} \right\}.
\end{align*}
Combining the above inequality with~\eqref{eqn:bound-by-uv-norm},
and using $r_k = L^{-1/2}\ltwos{f'(w_k)} \leq \ltwos{\utilde_k}$
with the monotonicity of $\omega(\cdot)$,
we arrive at
\begin{equation}\label{eqn:ready-for-superlinear}
    f(w_{k+1}) \leq f(w_k) - \omega(\ltwos{\utilde_k}) 
    + \min\left\{ \frac{\omega(\utilde_k)}{2}, \frac{\omega^{3/2}(\utilde_k)}{10} \right\}.
\end{equation}

Part~(a) of the theorem follows immediately from
inequality~\eqref{eqn:ready-for-superlinear}. 

For part~(b), we assume that $\ltwos{\utilde_k}\leq 1/8$.
Combining~\eqref{eqn:bound-regret-using-remainder} 
with~\eqref{eqn:ready-for-superlinear}, we have
\begin{align} 
    \omega(\ltwos{\utilde_{k+1}}) & \leq f(w_{k+1}) - f(\wstar)
    \leq f(w_k)-f(\wstar) - \omega(\ltwos{\utilde_k}) + \frac{\omega^{3/2}(\ltwos{\utilde_k})}{10} \nonumber \\
    & \leq \omega_*(\ltwos{\utilde_k}) - \omega(\ltwos{\utilde_k}) + \frac{\omega^{3/2}(\ltwos{\utilde_k})}{10}.
    \label{eqn:omega-k-bound}
\end{align}
Let $h(t)\defeq\omega_*(t)-\omega(t)$ and consider only $t\geq 0$.
Notice that $h(0)=0$ and 
$h'(t)=\frac{2t^2}{1-t^2}<\frac{128}{63}t^2$ for $t\leq 1/8$. 
Thus, we conclude that $h(t)\leq \frac{128}{189}t^3$ for $t\leq 1/8$.
We also notice that $\omega(0)=0$ and $\omega'(t)=\frac{t}{1+t}\geq\frac{8}{9}t$
for $t\leq 1/8$.
Thus, we have $\omega(t)\geq\frac{4}{9}t^2$ for $t\leq 1/8$.
Combining these results, we obtain
\[
    \omega_*(t)-\omega(t) \leq \frac{128}{189}t^3 = \frac{128}{189}(t^2)^{3/2}
    \leq \frac{128}{189}\left(\frac{9}{4}\omega(t)\right)^{3/2}
    \leq \left(\sqrt{6}-\frac{1}{10}\right) \omega^{3/2}(t) .
\]
Applying this inequality to the right-hand side of~\eqref{eqn:omega-k-bound}
completes the proof.
\end{proof}

In classical analysis of inexact Newton methods
\cite{DennisMore74superlinear,Dembo82InexactNewton}, 
asymptotic superlinear convergence occurs with 
$\epsilon_k\sim\ltwos{f'(w_k)}^{3/2}$ 
(in fact with $\epsilon\sim\ltwos{f'(w_k)}^s$ for any $s>1$).
This agrees with our analysis since $\omega(t)=\order(t)$ when~$t$ is
not too small.
Our result can be very conservative asymptotically
because $\omega(t)\sim t^2$ as $t\to 0$.
However, using $\omega(t)$ and the associated self-concordance analysis,
we are able to derive a much better global complexity result.

\begin{corollary}\label{coro:superlinear-complexity}
    Suppose $f:\R^d\to\R$ is a standard self-concordant function 
    and Assumption~\ref{asmp:Hessian-bounds} holds. 
    If we choose the sequence $\{\epsilon_k\}$ 
    in Algorithm~\ref{alg:damped-newton-method}
    as in~\eqref{eqn:epsilon-k-superlinear},
    then for any $\epsilon \leq 1/(3e)$, 
    we have $f(w_k)-f(w_\star)\leq\epsilon$ whenever
\begin{align}\label{eqn:superlinear-complexity}
k \geq \left\lceil\frac{f(w_0) - f(\wstar)}{\frac{1}{2}\omega(1/8)}\right\rceil
+ \left\lceil \frac{\log\log(1/(3\epsilon))}{\log(3/2)} \right\rceil .
\end{align}
where $\lceil t\rceil$ denotes the smallest \emph{nonnegative} integer that 
is larger than or equal to~$t$.
\end{corollary}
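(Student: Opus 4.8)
The plan is to follow the structure of the proof of Corollary~\ref{coro:damped-newton-complexity}, replacing Theorem~\ref{thm:damped-newton-convergence} by Theorem~\ref{thm:superlinear-convergence}. First I would handle the ``damped'' phase: by part~(a) of Theorem~\ref{thm:superlinear-convergence} and the strict monotonicity of $\omega$, every iteration with $\ltwos{\utilde_k}>1/8$ decreases the objective by at least the constant $\tfrac12\omega(1/8)$. Hence after at most $K_1=\bigl\lceil (f(w_0)-f(\wstar))\big/(\tfrac12\omega(1/8))\bigr\rceil$ iterations we reach an index with $\ltwos{\utilde_{K_1}}\le 1/8$.

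Next I would analyze the superlinear phase beginning at $k=K_1$. Set $a_j=\omega(\ltwos{\utilde_{K_1+j}})$; part~(b) of Theorem~\ref{thm:superlinear-convergence} gives $a_{j+1}\le\sqrt6\,a_j^{3/2}$ whenever $\ltwos{\utilde_{K_1+j}}\le 1/8$. The key trick is the rescaling $b_j\defeq 6\,a_j$, which turns this into the dimensionless recursion $b_{j+1}\le b_j^{3/2}$, hence $b_j\le b_0^{(3/2)^j}$ by induction. Since $\ltwos{\utilde_{K_1}}\le 1/8$ we have $b_0=6\,\omega(\ltwos{\utilde_{K_1}})\le 6\,\omega(1/8)<1/3$, as a short computation of $\omega(1/8)=\tfrac18-\log\tfrac98$ confirms. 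Because $b_0<1$, an easy induction using $a_{j+1}\le\sqrt{6\,\omega(1/8)}\,a_j<a_j$ shows that $\{a_j\}$ is decreasing and stays below $\omega(1/8)$, so $\ltwos{\utilde_{K_1+j}}\le 1/8$ for all $j$ and part~(b) indeed keeps applying.

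It remains to convert the rate $b_j\le b_0^{(3/2)^j}$ into a bound on the optimality gap. Using~\eqref{eqn:opt-gap-bounds} together with $\omega_*(t)\le 2\omega(t)$ for $t\le 1/6$ (as in the proof of Corollary~\ref{coro:damped-newton-complexity}), we get $f(w_{K_1+j})-f(\wstar)\le 2a_j=\tfrac13 b_j\le \tfrac13(1/3)^{(3/2)^j}=3^{-1-(3/2)^j}$. Requiring the right-hand side to be at most $\epsilon$ is equivalent to $(3/2)^j\ge \log(1/(3\epsilon))/\log 3$; since $\log 3>1$ it suffices that $(3/2)^j\ge\log(1/(3\epsilon))$, i.e.\ $j\ge\log\log(1/(3\epsilon))/\log(3/2)$. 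The hypothesis $\epsilon\le 1/(3e)$ ensures $\log(1/(3\epsilon))\ge 1$, so $\log\log(1/(3\epsilon))\ge 0$ and $j=\bigl\lceil\log\log(1/(3\epsilon))/\log(3/2)\bigr\rceil$ is a genuine nonnegative integer. Taking $k=K_1+j$ then yields exactly the bound~\eqref{eqn:superlinear-complexity}.

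I expect no real obstacle: the only genuinely non-mechanical idea is the rescaling $b_j=6a_j$ that linearizes the self-concordant superlinear recursion $a_{j+1}\le\sqrt6\,a_j^{3/2}$ into $b_{j+1}\le b_j^{3/2}$. The remaining work is bookkeeping --- verifying $6\,\omega(1/8)<1/3$ (which is also what keeps the recursion inside its region of validity), recalling $\omega_*(t)\le 2\omega(t)$ on $[0,1/6]$, and the elementary logarithm manipulation that produces the factor of $3$ inside $\log(1/(3\epsilon))$.
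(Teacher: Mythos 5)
Your proof is correct and follows essentially the same route as the paper: a damped phase controlled by part~(a), the rescaling $b_j=6a_j$ that turns $a_{j+1}\le\sqrt6\,a_j^{3/2}$ into $b_{j+1}\le b_j^{3/2}$ (the paper does this in log-space via $\log(6\omega(\ltwos{\utilde_k}))\le(3/2)^{k-K_1}\log(6\omega(1/8))$ with $6\omega(1/8)\le 1/e$, you use powers with $6\omega(1/8)<1/3$ --- equivalent arithmetic), and the conversion back through $f(w_k)-f(\wstar)\le\omega_*(\ltwos{\utilde_k})\le 2\omega(\ltwos{\utilde_k})$. Your explicit verification that the iterates remain in the region $\ltwos{\utilde_k}\le 1/8$ once they enter it is a detail the paper leaves implicit, but otherwise the arguments coincide.
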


\begin{proof}
    By part~(a) of Theorem~\ref{thm:superlinear-convergence}, if $\omega(\ltwos{\utilde_k})\geq 1/8$, then each iteration of Algorithm~\ref{alg:damped-newton-method} decreases the function value at least by the constant $\frac{1}{2}\omega(1/8)$.
So within at most 
$K_1\defeq\left\lceil\frac{f(w_0)-f(\wstar)}{\frac{1}{2}\omega(1/8)}\right\rceil$
iterations, we are guaranteed to have $\ltwos{\utilde_k}\leq 1/8$.

Part~(b) of Theorem~\ref{thm:superlinear-convergence} implies
$6\,\omega(\ltwos{\utilde_{k+1}}) \leq \left( 6\,\omega(\ltwos{\utilde_k})\right)^{3/2}$ when $\ltwos{\utilde_k}\leq 1/8$, and hence
\[
    \log\bigl(6\,\omega(\ltwos{\utilde_k})\bigr)
    \leq\left(\frac{3}{2}\right)^{k-K_1} 
    \log\left(6\,\omega(1/8)\right), 
    \qquad k \geq K_1.
\]
Note that both sides of the above inequality is negative.
Therefore, after $k\geq K_1 + \frac{\log\log(1/(3\epsilon))}{\log(3/2)}$ 
iterations (assuming $\epsilon\leq 1/(3e)$), we have
\[
    \log\bigl(6\,\omega(\ltwos{\utilde_k})\bigr) 
    \leq  \log(1/(3\epsilon)) \log(6\,\omega(1/8))
    \leq -\log(1/(3\epsilon)),
\]
which implies $\omega(\ltwos{\utilde_k}) \leq \epsilon/2$. 
Finally using~\eqref{eqn:bound-regret-using-remainder} 
and the fact that $\omega_*(t)\leq 2\,\omega(t)$ for $t\leq 1/8$, 
we obtain
\[
    f(w_k)-f(\wstar) \leq \omega_*(\ltwos{\utilde_k})
    \leq 2\,\omega(\ltwos{\utilde_k}) \leq \epsilon.
\]
This completes the proof.
\end{proof}

\section{Proof of Lemma~\ref{lemma:pcg-complexity}}
\label{sec:proof-lemma-pcg}

It suffices to show that the algorithm terminates at iteration $ t\leq T_{\mu}-1$, because
when the algorithm terminates, it outputs a vector $v_k$ which satisfies
$\ltwos{H v_k - \fp(w_k)} = \ltwos{r^{(t+1)}} \leq \epsilon_k$.
Denote by $v^* = H^{-1} \fp(w_k)$ the solution of the linear system
$H v_k = f'(w_k)$. 
By the classical analysis on the preconditioned conjugate gradient 
method (e.g., \cite{Luenberger73,Avriel76}),
Algorithm~\ref{alg:distributed-pcg} has the convergence rate
\begin{align}\label{eqn:nestrove-orignal-rate}
    (v^{(t)} - v^*)^T H (v^{(t)} - v^*) \leq 4 \left( \frac{\sqrt{\kappa} -1 }{\sqrt{\kappa}+1}\right)^{2t} (v^*)^T H v^*,
\end{align}
where $\kappa = 1 + 2\mu / \reg$ is the condition number  of $\pcd^{-1}H$ given
in~\eqref{eqn:preconditioned-kappa}.
For the left-hand side of inequality~\eqref{eqn:nestrove-orignal-rate}, we have
\begin{align*}
	(v^{(t)} - v^*)^T H (v^{(t)} - v^*) = (r^{(t)})^T H^{-1} r^{(t)} \geq \frac{ \ltwos{r^{(t)}}^2}{\hessianbound}.
\end{align*}
For the right-hand side of inequality~\eqref{eqn:nestrove-orignal-rate},
we have
\begin{align*}
	 (v^*)^T H v^* & = (f'(w_k))^T H^{-1} f'(w_k) \leq  \frac{\ltwos{f'(w_k)}^2}{\reg} .
\end{align*}
Combining the above two inequalities with inequality~\eqref{eqn:nestrove-orignal-rate}, we obtain
\begin{align*}
    \ltwos{r^{(t)}} \leq 2 \sqrt{\frac{L}{\lambda}} 
    \left( \frac{\sqrt{\kappa} -1 }{\sqrt{\kappa}+1}\right)^{t} \ltwos{f'(w_k)}
    \leq 2 \sqrt{\frac{L}{\lambda}} 
    \left(1 - \sqrt{\frac{\reg}{\reg + 2\mu}}\right)^{t}\ltwos{f'(w_k)} .
\end{align*}
To guarantee that $\ltwos{r^{(t)}} \leq \epsilon_k$, it suffices to have
\begin{align*}
    t ~\geq~ \frac{\log\Bigl(\frac{2 \sqrt{L/\lambda} \ltwos{f'(w_k)}}{\epsilon_k}\Bigr)}{- \log\left(1 - \sqrt{\frac{\reg}{\reg + 2\mu}}\right)} 
    ~\geq~ \sqrt{1+\frac{2\mu}{\lambda}}\, \log\biggr(\frac{2 \sqrt{L/\lambda} \ltwos{f'(w_k)}}{\epsilon_k}\biggr),
\end{align*}
where in the last inequality we used $-\log(1-x) \geq x$ for $0<x<1$.
Comparing with the definition of $T_\mu$, this is the desired result.


\section{Proof of Lemma~\ref{lemma:initialization-accuracy}}
\label{sec:initialization-accuracy-proof}

First, we prove inequality~\eqref{eqn:w0-norm-bound}.
Recall that $\wstar$ and  $\what_i$ minimizes $f(w)$ and $f_i(w) + \frac{\rho}{2}\ltwos{w}^2$.
Since both function are $\reg$-strongly convex, we have
\begin{align*}
	\frac{\reg}{2}\ltwos{\wstar}^2 &\leq f(\wstar)  \leq f(0) \leq V_0,\\
	\frac{\reg}{2}\ltwos{\what_i}^2 &\leq f_i(\what_i) + \frac{\rho}{2}\ltwos{\what_i}^2 \leq f_i(0) \leq V_0,
\end{align*}
which implies $\ltwos{\wstar} \leq \sqrt{\frac{2V_0}{\reg}}$ and $\ltwos{\what_i} \leq \sqrt{\frac{2V_0}{\reg}}$.
Then inequality~\eqref{eqn:w0-norm-bound} follows 
since $w_0$ is the average over $\{\what_i\}_{i=1}^m$.

In the rest of Appendix~\ref{sec:initialization-accuracy-proof},
we prove inequality~\eqref{eqn:initial-obj-bound}.
Let $z$ be a random variable in $\mathcal{Z}\subset\R^p$ with an unknown 
probability distribution.
We define a regularized population risk:
\[
    R(w) = \E_z[\phi(w, z)] + \frac{\lambda+\rho}{2}\ltwos{w}^2. 
\]
Let~$S$ be a set of~$n$ i.i.d.\ samples in~$\mathcal{Z}$
from the same distribution. 
We define a regularized empirical risk
\[
r_S(w) = \frac{1}{n}\sum_{z\in S}\phi(w,z) + \frac{\lambda+\rho}{2}\ltwos{w}^2,
\]
and its minimizer
\[
    \what_S = \arg\min_w ~r_S(w).
\]
The following lemma states that the population risk of $\what_S$ is
very close to its empirical risk.
The proof is based on the notion of \emph{stability}
of regularized empirical risk minimization \cite{BousquetElisseeff02}.

\begin{lemma}\label{lemma:stability}
    Suppose Assumption~\ref{asmp:smoothness} holds 
    and~$S$ is a set of~$n$ i.i.d.\ samples in~$\mathcal{Z}$.
    Then we have
   \[
       \E_S\bigl[R(\what_S) - r_S(\what_S)\bigr] \leq \frac{2G^2}{\rho n}.
   \]
\end{lemma}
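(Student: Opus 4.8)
The plan is to invoke the classical uniform-stability argument for regularized empirical risk minimization of Bousquet and Elisseeff. First I would observe that the regularization terms cancel, so that
\[
    R(\what_S) - r_S(\what_S) = \E_z[\phi(\what_S,z)] - \frac1n\sum_{z\in S}\phi(\what_S,z).
\]
Taking expectation over~$S$ and applying a renaming-of-variables argument — introduce an independent copy $z_j'$ of the $j$-th sample, form $S^j$ by replacing $z_j$ with $z_j'$, and swap the names $z_j\leftrightarrow z_j'$ (legitimate because all samples are i.i.d., $\what_S$ depends on $z_j$ but not $z_j'$, and $\what_{S^j}$ depends on $z_j'$ but not $z_j$) — yields, for each~$j$, $\E_{S,z}[\phi(\what_S,z)] = \E[\phi(\what_{S^j},z_j)]$, and hence the identity
\[
    \E_S[R(\what_S) - r_S(\what_S)] = \frac1n\sum_{j=1}^n \E\bigl[\phi(\what_{S^j},z_j) - \phi(\what_S,z_j)\bigr].
\]

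Next I would establish the stability bound $\ltwos{\what_S-\what_{S^j}} \leq \frac{2G}{(\lambda+\rho)n}$. Since $r_S$ and $r_{S^j}$ are each $(\lambda+\rho)$-strongly convex with minimizers $\what_S$ and $\what_{S^j}$, strong convexity gives $r_S(\what_{S^j}) - r_S(\what_S) \geq \frac{\lambda+\rho}{2}\ltwos{\what_{S^j}-\what_S}^2$ and the analogous inequality with the roles of $S$ and $S^j$ swapped. Adding these two inequalities and using $r_S(w) - r_{S^j}(w) = \frac1n(\phi(w,z_j)-\phi(w,z_j'))$ makes the two strong-convexity lower bounds combine into
\[
    (\lambda+\rho)\ltwos{\what_S-\what_{S^j}}^2 \leq \frac1n\Bigl[\bigl(\phi(\what_{S^j},z_j)-\phi(\what_S,z_j)\bigr) + \bigl(\phi(\what_S,z_j')-\phi(\what_{S^j},z_j')\bigr)\Bigr].
\]
At this point I would use the argument from the first part of the proof: from $\frac{\lambda+\rho}{2}\ltwos{\what_S}^2 \leq r_S(\what_S)\leq r_S(0)\leq V_0$ and Assumption~\ref{asmp:smoothness}~(i), both $\what_S$ and $\what_{S^j}$ (and the segment joining them) lie in the ball $\{\ltwos{w}\leq\sqrt{2V_0/\lambda}\}$, so Assumption~\ref{asmp:smoothness}~(ii) makes $\phi(\cdot,z)$ Lipschitz with constant~$G$ there, bounding each bracketed difference by $G\ltwos{\what_S-\what_{S^j}}$. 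Dividing through by $\ltwos{\what_S-\what_{S^j}}$ gives the stability bound.

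Finally I would combine the pieces: by Lipschitzness and the stability bound,
\[
    \E\bigl[\phi(\what_{S^j},z_j)-\phi(\what_S,z_j)\bigr] \leq G\,\E\ltwos{\what_{S^j}-\what_S} \leq \frac{2G^2}{(\lambda+\rho)n} \leq \frac{2G^2}{\rho n},
\]
and averaging over $j=1,\ldots,n$ completes the proof. The only mildly delicate points are keeping the probabilistic renaming honest (the independence structure that justifies the swap) and checking that the iterates stay inside the ball where the gradient bound~$G$ is valid; both are routine given the assumptions already in place, so I do not anticipate a serious obstacle.
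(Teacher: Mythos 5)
Your proposal is correct and follows essentially the same route as the paper's proof: the Bousquet--Elisseeff renaming identity, the stability bound on $\ltwos{\what_S-\what_{S^j}}$ obtained by adding the two strong-convexity inequalities, and the Lipschitz bound from Assumption~\ref{asmp:smoothness}~(ii) on the ball $\ltwos{w}\leq\sqrt{2V_0/\lambda}$. The only (harmless) differences are cosmetic: you use the full $(\lambda+\rho)$-strong convexity before relaxing to $\rho$ at the end, where the paper uses $\rho$-strong convexity directly, and you write the renaming identity as an average over $j$ where the paper fixes a single index by symmetry.
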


\begin{proof}
Let $S=\{z_1,\ldots,z_n\}$.
For any $k\in\{1,\ldots,n\}$, we define a modified training set $S^{(k)}$
by replacing~$z_k$ with another sample~$\ztilde_k$, which is
drawn from the same distribution and is independent of~$S$.
The empirical risk on $S^{(k)}$ is defined as
\[
    r_S^{(k)}(w) = \frac{1}{n}\sum_{z\in S^{(k)}} \phi(w,z) + \frac{\lambda+\rho}{2}\ltwos{w}^2.
\]
and let $\what_S^{(k)}= \arg\min_w r_S^{(k)}(w)$.
Since both $r_S$ and $r_S^{(k)}$ are $\rho$-strongly convex, we have
\begin{align*}
    r_S(\what_S^{(k)}) - r_S(\what_S) &\geq \frac{\rho}{2} \ltwos{\what_S^{(k)} - \what_S}^2\\
    r_S^{(k)}(\what_S) - r_S^{(k)}(\what_S^{(k)}) &\geq \frac{\rho}{2} \ltwos{\what_S^{(k)} - \what_S}^2.
\end{align*}
Summing the above two inequalities, and noticing that 
\[
    r_S(w) - r_S^{(k)}(w) = \frac{1}{n}(\phi(w,z_k) - \phi(w,\ztilde_k)),
\]
we have
\begin{equation}\label{eqn:what-stability}
    \ltwos{\what_S^{(k)}- \what_S}^2 
    \leq \frac{1}{\rho n}\left( \phi(\what_S^{(k)},z_k) - \phi(\what_S^{(k)},\ztilde_k)	- \phi(\what_S,z_k) + \phi(\what_S, \ztilde_k)\right) .
\end{equation}
By Assumption~\ref{asmp:smoothness} (ii)
and the facts $\ltwos{\what_S}\leq\sqrt{2V_0/\lambda}$ and
$\ltwos{\what_S^{(k)}}\leq\sqrt{2V_0/\lambda}$,  
we have
\[
    \bigl|\phi(\what_S^{(k)},z) - \phi(\what_S,z)\bigr| 
    \leq \gradbound \ltwos{\what_S^{(k)} - \what_S}, \qquad
    \forall\, z\in \mathcal{Z}.
\]
Combining the above Lipschitz condition with~\eqref{eqn:what-stability},
we obtain
\[
    \ltwos{\what_S^{(k)}- \what_S}^2 
    \leq \frac{2 \gradbound}{\rho n} \ltwos{\what_S^{(k)} - \what_S}.
\]
As a consequence, we have
$\ltwos{\what_S^{(k)} - \what_S} \leq \frac{2 \gradbound}{\rho n}$,
and therefore
\begin{equation}\label{eqn:uniform-stability}
    \bigl|\phi(\what_S^{(k)},z) - \phi(\what_S,z)\bigr| 
    \leq \frac{2 \gradbound^2}{\rho n}, 
    \qquad \forall\, z\in \mathcal{Z}.
\end{equation}
In the terminology of learning theory,
this means that empirical minimization over the regularized loss $r_S(w)$
has \emph{uniform stability} $2G^2/(\rho n)$ with respect to the 
loss function~$\phi$; see \cite{BousquetElisseeff02}.

For any fixed $k\in\{1,\ldots,n\}$, since $\ztilde_k$ is independent of~$S$, we have
\begin{align*}
    \E_S\bigl[R(\what_S) - r_S(\what_S)\bigr] 
    &= \E_S \biggl[ \E_{\ztilde_k}[\phi(\what_S,\ztilde_k)]
       - \frac{1}{n}\sum_{j=1}^n \phi(\what_S, z_j) \biggr] \\
    &= \E_{S,\ztilde_k}\bigl[\phi(\what_S,\ztilde_k)-\phi(\what_S,z_k)\bigr] \\
    &= \E_{S,\ztilde_k}\bigl[\phi(\what_S,\ztilde_k)-\phi(\what_S^{(k)},\ztilde_k)\bigr],
\end{align*}
where the second equality used the fact that 
$\E_S[\phi(\what_S,z_j)$ has the same value for all $j=1,\ldots,n$,
and the third equality used the symmetry between the pairs
$(S, z_k)$ and $(S^{(k)}, \ztilde_k)$
(also known as the \emph{renaming} trick; see \cite[Lemma~7]{BousquetElisseeff02}).
Combining the above equality with~\eqref{eqn:uniform-stability} yields
the desired result.
\end{proof}

Next, we consider a distributed system with~$m$ machines, where each machine
has a local dataset $S_i$ of size~$n$, for $i=1,\ldots,m$. 
To simplify notation, we denote the local regularized empirical loss function
and its minimizer by $r_i(w)$ and $\what_i$, respectively.
We would like to bound the excessive error when applying $\what_i$ 
to a different dataset $S_j$. Notice that
\begin{align}\label{eqn:v1-v2-v3}
    \E_{S_i,S_j}\bigl[r_j(\what_i) - r_j(\what_j)\bigr] 
    = \underbrace{\E_{S_i,S_j}\bigl[r_j(\what_i) - r_i(\what_i)\bigr]}_{v_1} 
    + \underbrace{\E_{S_i,S_j}\bigl[r_i(\what_i) - r_j(\what_R)\bigr]}_{v_2} 
    + \underbrace{\E_{S_j}\bigl[r_j(\what_R) - r_j(\what_j)\bigr]}_{v_3}
\end{align}
where $\what_R$ is the constant vector minimizing $R(w)$. 
Since $S_i$ and $S_j$ are independent, we have
\begin{align*}
    v_1 = \E_{S_i}\bigl[\E_{S_j}[r_j(\what_i)] - r_i(\what_i)\bigr]
= \E_{S_i}\bigl[R(\what_i) - r_i(\what_i)]  
\leq \frac{2 \gradbound^2}{\rho n} ,
\end{align*}
where the inequality is due to Lemma~\ref{lemma:stability}.
For the second term, we have    
\begin{align*}
    v_2 = \E_{S_i}\bigl[r_i(\what_i) - \E_{S_j}[r_j(\what_R)]\bigr]
    = \E_{S_i}\bigl[r_i(\what_i) - r_i(\what_R)\bigr] \leq 0.
\end{align*}
It remains to bound the third term~$v_3$.
We first use the strong convexity of $r_j$ to obtain
(e.g., \cite[Theorem~2.1.10]{Nesterov04book})
\begin{align}\label{eqn:r-j-bound}
	r_j(\what_R) - r_j(\what_j) \leq \frac{\ltwos{r_j'(\what_R)}^2}{2\rho},
\end{align}
where $r'_j(\what_R)$ denotes the gradient of $r_j$ at $\what_R$.
If we index the elements of $S_j$ by $z_1,\ldots,z_n$, then 
\begin{equation}\label{eqn:r-j-gradient}
    r_j'(\what_R) = \frac{1}{n} \sum_{k=1}^n \left(\phi'(\what_R, z_k) + (\reg+\rho) \what_R \right).
\end{equation}
By the optimality condition of $\what_R=\arg\min_w R(w)$, we have
for any $k\in\{1,\ldots,n\}$,
\[
    \E_{z_k}\bigl[ \phi'(\what_R,z_k) + (\lambda+\rho)\what_R\bigr] = 0.
\]
Therefore, according to~\eqref{eqn:r-j-gradient}, the gradient $r_j(\what_R)$
is the average of~$n$ independent and zero-mean random vectors.
Combining~\eqref{eqn:r-j-bound} and~\eqref{eqn:r-j-gradient} with the definition
of~$v_3$ in~\eqref{eqn:v1-v2-v3}, we have
\begin{align*}
	v_3 
    &\leq \frac{\E_{S_j}\!\left[\sum_{k=1}^n \ltwos{\phi'(\what_R,z_k) + (\reg+\rho) \what_R}^2\right]}{2 \rho n^2} \\
    &= \frac{\sum_{k=1}^n \E_{S_j}\!\left[\ltwos{\phi'(\what_R,z_k) + (\reg+\rho) \what_R}^2\right]}{2 \rho n^2} \\
    &\leq \frac{\sum_{k=1}^n \E[\ltwos{\phi'(\what_R,z_k)}^2]}{2 \rho n^2}\\
	&\leq \frac{\gradbound^2}{2 \rho n}.
\end{align*}
In the equality above, we used the fact that 
$\phi'(\what_R,z_k)+(\lambda+\rho)\what_R$ are i.i.d.\ zero-mean random variables; so the variance of their sum equals the sum of their variances.
The last inequality above is due to Assumption~\ref{asmp:smoothness}~(ii)
and the fact that
$\ltwos{\what_R}\leq \sqrt{2V_0/(\lambda+\rho)}\leq\sqrt{2V_0/\lambda}$.
Combining the upper bounds for $v_1$, $v_2$ and $v_3$, we have
\begin{align}\label{eqn:expect-Si-Sj}
    \E_{S_i,S_j} \left[r_j(\what_i) - r_j(\what_j)\right] \leq \frac{3 \gradbound^2}{\rho n}.
\end{align}

Recall the definition of $f(w)$ as
\[
    f(w) = \frac{1}{mn}\sum_{i=1}^m\sum_{k=1}^n \phi(w, z_{i,k})+\frac{\lambda}{2}\ltwos{w}^2,
\]
where $z_{i,k}$ denotes the $k$th sample at machine~$i$.
Let $r(w) = \frac{1}{m}\sum_{j=1}^m r_j(w)$; then we have
\begin{equation}\label{eqn:rw-fw}
    r(w) = f(w) + \frac{\rho}{2} \ltwos{w}^2 .
\end{equation}
We compare the value $r(\what_i)$, for any $i\in\{1,\ldots,m\}$,
with the minimum of $r(w)$:
\begin{align*}
   r(\what_i) -\min_w r(w) 
   & = \frac{1}{m}\sum_{j=1}^m r_j(\what_i) - \min_w\frac{1}{m}\sum_{j=1}^m r_j(w) \\
   & \leq \frac{1}{m}\sum_{j=1}^m r_j(\what_i) - \frac{1}{m}\sum_{j=1}^m \min_w r_j(w) \\
   & = \frac{1}{m}\sum_{j=1}^m \left( r_j(\what_i) - r_j(\what_j)\right) .
\end{align*}
Taking expectation with respect to all the random data sets $S_1,\ldots, S_m$
and using~\eqref{eqn:expect-Si-Sj}, we obtain
\begin{align}\label{eqn:ri-rmin}
	\E[ r(\what_i) - \min_w r(w)] \leq \frac{1}{m} \sum_{j=1}^n \E[ r_j(\what_i)-  r_j(\what_j)] \leq \frac{3 \gradbound^2}{\rho n}.
\end{align}
Finally, we bound the expected value of $f(\what_i)$:
\begin{align*}
    \E[f(\what_i)] 
    &\leq \E[r(\what_i)] 
    \leq \E\left[\min_w r(w)\right] + \frac{3G^2}{\rho n} \\
    &\leq \E\left[f(\wstar)+\frac{\rho}{2}\ltwos{\wstar}^2\right]+ \frac{3G^2}{\rho n} \\
    & \leq \E\left[f(\wstar)\right]+\frac{\rho D^2}{2}+ \frac{3G^2}{\rho n},
\end{align*}
where the first inequality holds because of~\eqref{eqn:rw-fw},
the second inequality is due to~\eqref{eqn:ri-rmin},
and the last inequality follows from the assumption that 
$\E[\ltwos{\wstar}]\leq D^2$.
Choosing $\rho = \sqrt{\frac{6 \gradbound^2}{n D^2 }}$ results in
$\E[f( \what_i) - f(\wstar)] \leq \frac{\sqrt{6}\gradbound D}{\sqrt{n}}$
for every $i\in \{1,\ldots,m\}$. 
Since $w_0=\frac{1}{m}\sum_{i=1}^m \what_i$,
using the convexity of function~$f$ yields
$\E[f(w_0) - f(\wstar)] \leq \frac{\sqrt{6}\gradbound D}{\sqrt{n}}$,
which is the desired result.

\section{Proof of Lemma~\ref{lemma:uniform-matrix-concentration}}
\label{sec:proof-uniform-matrix-concentration}

We consider the regularized empirical loss functions $f_i(w)$ defined 
in~\eqref{eqn:regularized-loss}.
For any two vectors $u,w\in \R^d$ satisfying $\ltwos{u-w}\leq \varepsilon$, 
Assumption~\ref{asmp:smoothness}~(iv) implies
\begin{align*}
	\ltwos{\fpp_i(u) - \fpp_i(w)} \leq \tensorbound \varepsilon.
\end{align*}
Let $B(0,r)$ be the ball in $\R^d$ with radius~$r$, centered at the origin.
Let $N_\varepsilon^{\mathrm{cov}}(B(0,r))$ be the \emph{covering number} 
of $B(0,r)$ by balls of radius~$\varepsilon$, 
\ie, the minimum number of balls of radiusr~$\varepsilon$ required to 
cover $B(0,r)$. 
We also define $N_\varepsilon^{\mathrm{pac}}(B(0,r))$ as the 
\emph{packing number} of $B(0,r)$, \ie, 
the maximum number of disjoint balls whose centers belong to $B(0,r)$. 
It is easy to verify that
\begin{align*}
    N_{\varepsilon}^{\mathrm{cov}}(B(0,r)) \leq 
    N_{\varepsilon/2}^{\mathrm{pac}}(B(0,r)) \leq 
    \left( 1 + {2r}/{\varepsilon}\right)^d.
\end{align*}
Therefore, there exist a set of points $U\subseteq \R^d$ with cardinality at most $( 1 + {2r}/{\varepsilon})^d$, such that for any vector $w\in B(0,r)$, we have
\begin{align}\label{eqn:difference-by-convering}
	\min_{u\in U} \ltwos{\fpp_i(w) - \fpp_i(u)} \leq \tensorbound\varepsilon.
\end{align}

We consider an arbitrary point $u\in U$ and the associated Hessian matrices 
for the functions $f_i(w)$ defined in~\eqref{eqn:regularized-loss}.
We have
\[
\fpp_i(u) = \frac{1}{n} \sum_{j=1}^n \left(\phi''(u, z_{i,j}) + \reg I\right),
\qquad i=1,\ldots,m.
\]
The components of the above sum are i.i.d.~matrices which are upper bounded by 
$\hessianbound I$.
By the matrix Hoeffding's inequality~\cite[Corollary~4.2]{mackey2014matrix}, 
we have
\begin{align*}
	\Prob\left[\ltwos{\fpp_i(u) - \E[\fpp_i(u)]} > t\right] \leq d \cdot e^{- \frac{n t^2}{2\hessianbound^2}} .
\end{align*}
Note that $\E[\fpp_1(w)] = \E[\fpp(w)]$ for any $w\in B(0,r)$. 
Using the triangular inequality and 
inequality~\eqref{eqn:difference-by-convering}, we obtain
\begin{align}
\ltwos{\fpp_1(w) - \fpp(w)]} 
&\leq \ltwos{\fpp_1(w) - \E[\fpp_1(w)]} + \ltwos{\fpp(w) - \E[\fpp(w)]} \nonumber \\[1ex]
& \leq 2\max_{i\in\{1,\ldots,m\}} \ltwos{\fpp_i(w) - \E[\fpp_i(w)]} \nonumber\\
&\leq 2\max_{i\in\{1,\ldots,m\}} \Big(\max_{u\in U}\ltwos{\fpp_i(u) - \E[\fpp_i(u)]} + \tensorbound \varepsilon \Big).\label{eqn:union-bound-derivation-detail}
\end{align}
Applying the union bound, we have with probability at least 
\[
 1 - m d ( 1 + {2r}/{\varepsilon})^d\cdot e^{- \frac{n t^2}{2\hessianbound^2}},
\]
the inequality $\ltwos{\fpp_i(u) - \E[\fpp_i(u)]} \leq t$ holds for every 
$i\in\{1,\ldots,m\}$ and every $u\in U$. 
Combining this probability bound with 
inequality~\eqref{eqn:union-bound-derivation-detail}, we have
\begin{align}\label{eqn:union-bound-outcome}
	\Prob\Big[\sup_{w\in B(0,r)}\ltwos{\fpp_1(w) - \fpp(w)} > 2 t + 2 \tensorbound\varepsilon \Big] \leq 
	m d \left( 1 + {2r}/{\varepsilon}\right)^d \cdot e^{- \frac{n t^2}{2\hessianbound^2}}.
\end{align}
As the final step, we choose 
$\varepsilon = \frac{\sqrt{2}\hessianbound}{\sqrt{n}\tensorbound}$
and then choose~$t$ to make the right-hand side of 
inequality~\eqref{eqn:union-bound-outcome} equal to $\delta$.
This yields the desired result.

\section{More analysis on the number of PCG iterations}
\label{sec:analysis-T-L}

Here we analyze the number of iterations of the distributed PCG method
(Algorithm~\ref{alg:distributed-pcg}) when $\mu$ is misspecified, \ie,
when~$\mu$ used in $\pcd=H_1+\mu I$ is not an upper bound on $\|H_1-H\|_2$.
For simplicity of discussion, we assume that 
Assumption~\ref{asmp:Hessian-bounds} holds,
$\|H_1-H\|_2\leq L$ and $\mu\leq L$.
In this case, we can show (using similar arguments for proving 
Lemma~\ref{lemma:reduce-condition-number}):
\begin{align*}
  \sigma_\mathrm{max}( (H_1+\mu I)^{-1} H ) &\leq \frac{2L}{L+\mu}, \\
  \sigma_\mathrm{min}( (H_1+\mu I)^{-1} H ) &\geq \frac{\lambda}{L+\mu+\lambda}.
\end{align*}
Hence the condition number of the preconditioned linear system is
\[
    \kappa_{\mu,L} = \frac{2L}{\lambda}\left(1+\frac{\lambda}{L+\mu}\right)
    \leq 2 + \frac{2L}{\lambda}, 
\]
and the number of PCG iterations is bounded by 
(\cf~Appendix~\ref{sec:proof-lemma-pcg})
\[
    \left\lceil\sqrt{\kappa_{\mu,L}}\log\left(\frac{2L}{\beta\lambda}\right)\right\rceil
    \leq \sqrt{2+\frac{2L}{\lambda}} \log\left(\frac{2L}{\beta\lambda}\right). 
\]
This gives the bound on number of PCG iterations in~\eqref{eqn:T-L}.

\newpage
\bibliographystyle{abbrv}
\bibliography{DiSCO_paper}

\end{document}